\DeclareSymbolFontAlphabet{\mathbb}{AMSb}
\DeclareSymbolFontAlphabet{\mathbbl}{bbold}
\newcommand{\SetSSYT}{\ensuremath\mathrm{SetSSYT}}
\newcommand{\SetSSMT}{\ensuremath\mathrm{SetSSMT}}
\numberwithin{equation}{section}
\theoremstyle{definition}
\newtheorem* {theorem*}{Theorem}
\newtheorem* {conjecture*}{Conjecture}
\newtheorem{theorem}{Theorem}[section]
\newtheorem{thmdef}[theorem]{Theorem-Definition}
\newtheorem{problem}[theorem]{Problem}
\theoremstyle{definition}
\newtheorem* {remark*}{Remark}
\newtheorem* {example*}{Example}
\newtheorem{lemma}[theorem]{Lemma}
\theoremstyle{definition}
\newtheorem{definition}[theorem]{Definition}
\theoremstyle{definition}
\newtheorem* {notation}{Notation}
\newtheorem{proposition}[theorem]{Proposition}
\newtheorem{corollary}[theorem]{Corollary}
\newtheorem{remark}[theorem]{Remark}
\theoremstyle{definition}
\newtheorem {example}[theorem]{Example}
\theoremstyle{definition}
\theoremstyle{definition}
\theoremstyle{definition}
\def\({\left(}
\def\){\right)}
\newcommand{\sP}{\mathscr{P}}
\newcommand{\CC}{\mathbb{C}}
\newcommand{\QQ}{\mathbb{Q}}
\def\NN{\mathbb{N}}
\def\Hom{\mathrm{Hom}}
\def\CC{\mathbb{C}}
\def\ZZ{\mathbb{Z}}
\def\GL{\textsf{GL}}
\def\spanning{\textnormal{-span}}
\def\fk{\mathfrak}
\def\barr{\begin{array}}
\def\earr{\end{array}}
\def\ba{\begin{aligned}}
\def\ea{\end{aligned}}
\def\be{\begin{equation}}
\def\ee{\end{equation}}
\def\qquand{\qquad\text{and}\qquad}
\def\quand{\quad\text{and}\quad}
\def\I{I}
\def\cH{\mathcal H}
\def\ds{\displaystyle}
\def\id{\mathrm{id}}
\def\PP{\mathbb{P}}
\def\MM{\mathbb{M}}
\def\fkS{\fk S}
\def\fkG{\fk G}
\def\ben{\begin{enumerate}}
\def\een{\end{enumerate}}
\def\cE{\mathcal E}
\def\fpf{{\textsf {FPF}}}
\newcommand{\wfpf}{\Theta}
\def\SS{\mathrm{SD}}
\newcommand{\Fl}{\textsf{Fl}}
\renewcommand{\O}{\mathsf{O}}
\newcommand{\Sp}{\mathsf{Sp}}
\newcommand{\K}{\mathsf{K}}
\newcommand{\Ess}{\operatorname{Ess}}
\newcommand{\rank}{\operatorname{rank}}
\renewcommand{\dim}{\operatorname{dim}}
\newcommand{\codim}{\operatorname{codim}}
\newcommand{\pf}{\operatorname{pf}}
\newcommand{\cA}{\mathcal{A}}
\def\cAfpf{\cA_\fpf}
\def\CK{C\hspace{-0.2mm}K}
\def\GP{G\hspace{-0.2mm}P}
\def\GQ{G\hspace{-0.2mm}Q}
\def\HAfpf{\mathcal{B}_\fpf}
\def\YY{\mathrm{D}}
\def\cHfpf{\cH^{\textsf{Sp}}}
\def\iG{\fkG^{\textsf{O}}}
\def\iS{\hat \fkS}
\def\arcstart{\ \xy<0cm,.2cm>\xymatrix@!@R=.1cm@C=.2cm }
\def\ellfpf{\ell_\fpf}
\def\cF{\mathcal{F}}
\def\cG{\mathcal{G}}
\def\Gr{\mathrm{Gr}}
\newcommand{\Sym}{\textsf{Sym}}
\def\cW{\mathcal{W}}
\def\cV{\mathcal V}
\def\cU{\mathcal U}
\def\cHfpf{\cH_{\Sp}}
\def\Gfpf{\mathfrak{G}^\Sp}
\newcommand{\KG}{\mathfrak{G}^\K}
\def\GSp{\GP^{\Sp}}
\def\GO{\GQ^{\O}}
\newcommand{\cN}{\mathcal{N}_\infty}
\def\Nil{\mathcal{U}_\infty}
\def\*{\mathbin{\hat\circ}}
\def\SLambda{\sP_{\textsf{strict}}}
\def\ILambda{I\hspace{-0.2mm}\Lambda}
\def\betaMin{\alpha_\fpf}
\def\varpi{\partial^{(\beta)}}
\def\vartheta{\pi^{(\beta)}}
\def\iX{X^{\O}}
\def\fX{X^{\Sp}}
\def\KX{X^{\K}}
\def\iMX{M\hspace{-0.2mm}X^{\O}}
\def\fMX{M\hspace{-0.2mm}X^{\Sp}}
\def\KMX{M\hspace{-0.2mm}X^{\K}}
\def\LG{\mathsf{LG}}
\def\OC{\fk C}
 \def\SWNEleq{\preceq}
\def\SWNEneq{\prec}
\def\graph{\Gamma}
\def\shO{\lambda^{\O}}
\def\DO{D^{\O}}
\def\DSp{D^{\Sp}}
\def\DK{D^\K}
\def\Mat{\mathsf{Mat}}
\def\MK{\mathsf{Mat}^\K}
\def\MO{\mathsf{Mat}^{\O}}
\def\MSp{\mathsf{Mat}^{\Sp}}
\newcommand{\Hs}{\mathcal{H}}
\def\ISp{I^{\textsf{FPF}}}
\def\pt{\mathrm{pt}}
\def\top{\mathrm{T}}
\def\wfpf{{1_{\fpf}}}
\begin{document}
\title{$K$-theory formulas for orthogonal and symplectic orbit closures}
\author{
Eric Marberg \\ HKUST \\ {\tt eric.marberg@gmail.com}
\and
Brendan Pawlowski \\ University of Southern California \\ {\tt br.pawlowski@gmail.com}
}

\date{}

\maketitle

\begin{abstract}
 The complex orthogonal and symplectic groups both act on the complete flag variety with finitely many orbits. We study two families of polynomials 
 introduced by Wyser and Yong representing the $K$-theory classes of the closures of these orbits. Our polynomials are analogous to the Grothendieck polynomials representing $K$-classes of Schubert varieties, and we show that like Grothendieck polynomials, they are uniquely characterized among all polynomials representing the relevant classes by a certain stability property. We show that the same polynomials represent the equivariant $K$-classes of symmetric and skew-symmetric analogues of Knutson and Miller's matrix Schubert varieties. We derive explicit expressions for these polynomials in special cases, including a Pfaffian formula relying on a more general degeneracy locus formula of Anderson.  Finally, we show that taking an appropriate limit of our representatives recovers  the $K$-theoretic Schur $Q$-functions of Ikeda and Naruse.
\end{abstract}


\section{Introduction}

Our results in this paper concern two families of polynomials 
representing $K$-theory classes of orbit closures in the complete flag variety,
which  we call \emph{orthogonal} and \emph{symplectic Grothendieck polynomials}.
For motivation, we start by reviewing
the classical story of \emph{Grothendieck polynomials}, which 
represent the $K$-theory classes of type A Schubert varieties.

Let $n$ be a positive integer
and write $\GL_n  =\GL_n(\CC)$ for the general linear group of 
invertible $n\times n$ complex matrices. Define $B\subseteq \GL_n$
to be the Borel subgroup of invertible lower triangular matrices.

Suppose  $X$ is a smooth complex algebraic variety.
Let $K(X)$ denote
the Grothendieck group of coherent sheaves on $X$ equipped with a ring structure induced by the (derived) tensor product. This the usual \emph{$K$-theory ring} of $X$.

We write $\CK(X)$ for the 
\emph{connective $K$-theory ring} of $X$ introduced by Cai \cite{Cai}.
This is a certain graded algebra over the coefficient ring $\ZZ[\beta]$,
which can be interpreted as the connective $K$-theory ring of
a point. For any closed equidimensional subscheme $Y \subseteq X$,
there is an associated $K$-theory class $[Y]_K \in K(X)$, namely the class of the structure sheaf of $Y$, and an associated
connective $K$-theory class $[Y]_{\CK} \in \CK(X)$.

We define 
 the \emph{complete flag variety} $\Fl_n := B\backslash\GL_n$ 
 to be the set of right cosets of $B$ in $\GL_n$.
 The ordinary $K$-theory ring of $\Fl_n$ can be realized as 
\be
K(\Fl_n) \cong \ZZ[x_1,x_2,\dots,x_n] /\ILambda_n
\ee
and the connective $K$-theory ring as 
\be \label{eq:CK-Fl}
\CK(\Fl_n) \cong \ZZ[\beta][x_1,x_2,\dots,x_n] / \ILambda_n[\beta].
\ee
where $\beta,x_1,x_2,\dots$ are commuting indeterminates and $ \ILambda_n \subseteq \ZZ[x_1,x_2,\dots,x_n]$ is the ideal
generated by symmetric polynomials without constant term in the variables $x_1,x_2,\dots,x_n$; see \S \ref{gr-sect}.

Let $S_n$ denote the symmetric group of permutations of $[n] := \{1,2,\dots,n\}$
and identify $w \in S_n$ with the permutation matrix in $\GL_n$ with 
$1$ in position $(i, w(i))$. 
It follows by elementary linear algebra that
the opposite Borel subgroup $B^+$ of upper triangular matrices in $\GL_n$ acts on $\Fl_n$ on the right with $n!=|S_n|$ distinct orbits.
The orbit closures $X_w := \overline{B w B^{+}} $ for $w \in S_n$ 
are the \emph{Schubert varieties} in $\Fl_n$
and one is interested in  describing the classes $[X_w] \in \CK(\Fl_n)$.

For $v \in S_n$ and $w \in S_m$, let $v \times w \in S_{n+m}$ be the permutation with $i \mapsto v(i)$ for $i \in [n]$ and $n+i \mapsto n+w(i)$ for $i \in [m]$. We also write $w^m$ for the $m$-fold product $w \times w\times \cdots \times w$, so that $1^m$ is the identity in  $S_m$. Many different polynomials correspond to each class $[X_w] \in \CK(\Fl_n)$ under the isomorphism \eqref{eq:CK-Fl}, but if one also requires a certain compatibility condition with respect to the maps $w \mapsto w \times 1^m$, 
then there is a unique family of such polynomials:

\begin{theorem}
\label{intro-thm1}
There are unique polynomials
$ \fkG_w\in \ZZ[\beta][x_1,x_2,\dots]$ for $n \in \PP$ and $w \in S_n$
such that
$ \fkG_{w} +  \ILambda_n[\beta] = [X_w] \in \CK(\Fl_n)$
and $\fkG_{w} = \fkG_{w \times 1}$.
\end{theorem}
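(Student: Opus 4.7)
The strategy is to construct the polynomials $\fkG_w$ explicitly by a divided-difference recursion starting from a closed-form expression at the longest element, then verify that they represent the required $K$-theory classes and satisfy the stability property, and finally derive uniqueness from a basis-expansion argument.

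For existence I would begin with the longest element: set $\fkG_{w_0^{(n)}} := x_1^{n-1}x_2^{n-2}\cdots x_{n-1}$ where $w_0^{(n)} \in S_n$ is the longest permutation. For shorter $w \in S_n$ with $\ell(w) < \binom{n}{2}$, pick any $i$ with $w(i) < w(i+1)$ and define $\fkG_w := \vartheta_i(\fkG_{ws_i})$, where $\vartheta_i$ denotes the $\beta$-isobaric divided-difference operator $f \mapsto \partial_i\bigl((1+\beta x_{i+1})f\bigr)$. A routine braid-relation verification guarantees this is independent of the reduced expression chosen. To connect these polynomials to $K$-theory, I would verify (i) that $\fkG_{w_0^{(n)}}$ represents $[X_{w_0^{(n)}}]$ in $\CK(\Fl_n)$, i.e.\ that the monomial $\prod_{i=1}^{n-1}x_i^{n-i}$ is the connective $K$-theory class of a point, and (ii) that $\vartheta_i$ descends to $\CK(\Fl_n)$ and acts on Schubert classes by $\vartheta_i[X_w] = [X_{ws_i}]$ whenever $\ell(ws_i) < \ell(w)$. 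Chasing the recursion then gives $\fkG_w + \ILambda_n[\beta] = [X_w]$ for every $w \in S_n$.

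For the stability $\fkG_w = \fkG_{w \times 1}$, I would observe that the operators $\vartheta_i$ for $i < n$ involve none of the variables $x_{n+1}, x_{n+2}, \ldots$ and are compatible with the inclusion $S_n \hookrightarrow S_{n+1}$ via $w \mapsto w \times 1$. So it suffices to verify stability at the base case: applying the chain of $\vartheta_j$'s that realizes the reduction $w_0^{(n+1)} \to w_0^{(n)} \times 1$ inside $S_{n+1}$ produces the polynomial $\fkG_{w_0^{(n)}}$, which is a direct computation. For uniqueness, suppose $\fkG'_w$ is another family with the same properties and put $d_w := \fkG_w - \fkG'_w$. Then $d_w$ is a polynomial in finitely many variables with $d_w \in \ILambda_N[\beta]$ for every $N \geq n$. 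Expanding $d_w$ in the $\ZZ[\beta]$-basis $\{\fkG_u : u \in S_\infty\}$ of $\ZZ[\beta][x_1,x_2,\ldots]$ and using that $\{[X_u] : u \in S_N\}$ is a $\ZZ[\beta]$-basis of $\CK(\Fl_N)$ forces every coefficient in the expansion to vanish, so $d_w = 0$.

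The main obstacle is the geometric input in the existence argument: establishing that $\fkG_{w_0^{(n)}}$ genuinely represents the point class in the connective $K$-theory of $\Fl_n$ and that $\vartheta_i$ implements the correct operation on $\CK(\Fl_n)$. Both rely on extending the standard Schubert calculus of the flag variety to Cai's connective $K$-theory, and on this point I would invoke \cite{Cai} directly. Once this is in hand, the braid-relation check, the base-case stability calculation, and the uniqueness expansion are all essentially formal.
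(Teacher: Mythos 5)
Your proposal follows essentially the same route as the paper: define $\fkG_w$ by the $\varpi_i$-recursion from the staircase monomial (Theorem-Definition~\ref{groth-thmdef}), get stability from compatibility of that recursion with $w\mapsto w\times 1$, take the connective $K$-theory representability as external geometric input, and deduce uniqueness from $\bigcap_N \ILambda_N[\beta]=0$ via a basis expansion (Lemma~\ref{ilam-lem}). The only corrections worth noting are that the geometric input you defer to \cite{Cai} is really Hudson's theorem \cite[Theorem 1.2]{Hudson} (Cai only constructs the theory), and that $\{\fkG_u\}$ is not literally a $\ZZ[\beta]$-basis of the polynomial ring since expansions may be infinite --- expanding the lowest-degree homogeneous part in Schubert polynomials, as the paper does, avoids this.
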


 This statement combines several known results reviewed in Section~\ref{gr-sect}.
The polynomials $\fkG_w$ are the \emph{(generalized) Grothendieck polynomials} introduced in \cite{FK1994}.
The \emph{Schubert polynomials} (see \cite[Chapter 2]{Manivel})
are the special case of these functions with $\beta=0$.
Setting $\beta =-1$ and replacing each variable $x_i$ by $1-x_i$,
alternatively, recovers Lascoux and Sch\"utzenberger's original definition of Grothendieck polynomials
in \cite{Lascoux1990,LS1983}.

It is a remarkable observation of Fomin and Kirillov \cite{FK1994} that the sequence of polynomials $\fkG_{1^m \times w}$ converges as $m \to \infty$ to a symmetric function:

\begin{theorem}[{\cite
[Theorem 2.3]
{FK1994}}]
\label{intro-thm2}
There are unique symmetric functions 
$ G_w $ for each $n \in \PP$ and $w \in S_n$
such that
$G_w(x_1,\dots,x_n) = \fkG_{1^N\times w}(x_1,\dots,x_n)$
for all $N\geq n$.
\end{theorem}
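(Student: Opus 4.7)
The plan is to deduce both parts from two standard properties of the Grothendieck polynomials $\fkG_v$, with uniqueness coming for free. Uniqueness is immediate: a formal symmetric function in countably many variables is determined by its specializations setting $x_{n+1} = x_{n+2} = \cdots = 0$ for each $n$, so the compatibility conditions characterize $G_w$ uniquely.

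For existence I will use two properties of $\fkG_v$. The first is a \emph{symmetry property}: $\fkG_v$ is invariant under swapping $x_i$ and $x_{i+1}$ precisely when $v(i) < v(i+1)$, which is immediate from the isobaric divided-difference recursion. The second is a \emph{shift identity}
\[
\fkG_{1 \times v}(0, x_1, x_2, \dots) = \fkG_v(x_1, x_2, \dots),
\]
which I plan to prove by induction on $\ell(v)$ using that same recursion, the key observation being that the operator $\pi_{i+1}$ on the original variables corresponds to $\pi_i$ after the reindexing induced by $x_1 = 0$.

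Combining these, observe that for $w \in S_n$ the permutation $1^N \times w$ has no descents in positions $1, \dots, N$, so $\fkG_{1^N \times w}$ is symmetric in $x_1, \dots, x_{N+1}$. When $N \geq n$, the evaluation at $(x_1, \dots, x_n, 0, 0, \dots)$ puts a zero into position $n+1 \leq N+1$; using symmetry to transport this zero into position $1$ and then applying the shift identity to $1^N \times w = 1 \times (1^{N-1} \times w)$ gives
\[
\fkG_{1^N \times w}(x_1, \dots, x_n, 0, 0, \dots) = \fkG_{1^{N-1} \times w}(x_1, \dots, x_n, 0, 0, \dots).
\]
Iterating shows the sequence stabilizes for all $N \geq n$. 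The stable value is symmetric in $x_1, \dots, x_n$, and the parallel argument confirms the $n$- and $(n+1)$-variable stable values agree under $x_{n+1} = 0$, so they assemble into a symmetric function $G_w$ with the required property. The hardest step will be the shift identity, which requires careful bookkeeping of how the isobaric divided-difference operators transform under the variable shift.
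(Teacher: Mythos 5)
Your argument is correct, but it is worth noting that the paper does not prove this statement at all: it is quoted directly from Fomin--Kirillov \cite[Theorem 2.3]{FK1994}, whose own derivation goes through the Yang--Baxter/exponential-solution machinery and the resulting combinatorial (pipe dream) formula for $\fkG_w$, from which stability under $w \mapsto 1\times w$ can be read off. Your route is a self-contained algebraic alternative built entirely on the divided-difference recursion of Theorem-Definition \ref{groth-thmdef}. Both of your lemmas check out: since $\varpi_i f = -\beta f + (1+\beta x_i)\partial_i f$, the identity $\varpi_i \fkG_v = -\beta \fkG_v$ for $v(i)<v(i+1)$ forces $\partial_i \fkG_v = 0$, giving the symmetry you need (you only use the ``if'' direction, so the ``precisely when'' is a harmless overstatement); and the shift identity $\fkG_{1\times v}(0,x_1,x_2,\dots)=\fkG_v(x_1,x_2,\dots)$ does follow by downward induction on $\ell(v)$ exactly as you sketch, because $\varpi_{i+1}$ does not touch $x_1$ and becomes $\varpi_i$ under the reindexing --- just make sure to nail down the base case $v = n\cdots 21$, where $1\times v$ is dominant and its monomial visibly shifts correctly. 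The deduction is then airtight: for $N\ge n$ the zero sitting in slot $n+1 \le N+1$ lies inside the symmetric range $x_1,\dots,x_{N+1}$ of $\fkG_{1^N\times w}$, so it can be cycled to slot $1$ and stripped off, giving $\fkG_{1^N\times w}(x_1,\dots,x_n)=\fkG_{1^{N-1}\times w}(x_1,\dots,x_n)$, and the compatible symmetric specializations assemble uniquely into $G_w$. What your approach buys is independence from the Yang--Baxter formalism and from any positivity or combinatorial formula; what it costs is the bookkeeping in the shift identity, which is precisely the step you flag.
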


Following established practice,
we refer to the symmetric functions $G_w$ as \emph{stable Grothendieck polynomials}.
These power series have a number of other interesting properties and are studied in \cite{Buch2002,BKSTY,FK1994}.

The preceding results have interesting counterparts for the 
orbit closures of the orthogonal and symplectic groups acting on $\Fl_n$.
 These actions are particularly natural to consider:
 they both have finitely many orbits, 
 and correspond to two of the three families of type A symmetric varieties \cite{RichSpring}.
 (The third family comes from the action of $\GL_p \times \GL_{n-p}$ on $\Fl_n$;
 $K$-theory representatives for the relevant orbit closures are 
studied in \cite{WyserYong2}.)

Fix nondegenerate symmetric and skew-symmetric bilinear forms on $\CC^n$.
We define the \emph{orthogonal group} $\O_n$ and the \emph{symplectic group} $\Sp_n$ as 
the subgroups of $\GL_n$ preserving these forms. Note that $n$ must be even in the skew-symmetric case. As explained in \cite[\S10]{RichSpring}, the $\O_n$-orbits on $\Fl_n$ are in bijection with the set of involutions 
\[I_n:=\{w \in S_n : w=w^{-1}\}\] while the $\Sp_n$-orbits are in bijection with the set of fixed-point-free involutions \[
\ISp_n := \{ z \in I_n : z(i)\neq i\text{ for all }i \in [n]\}. \]
We write $\{\iX_z : z \in I_n\}$ and $\{\fX_z : z \in \ISp_n\}$ for the respective families of $\K_n$-orbit closures, where $\K$ is one of the symbols $\O$ or $\Sp$; see \S \ref{matrix-sect} for explicit descriptions of these varieties.

We can now state symplectic and orthogonal analogues of Theorem~\ref{intro-thm1}:

   \begin{theorem}
   [Wyser and Yong \cite{WyserYong}]
     \label{thm:intro-fX-polynomials}
    There are unique polynomials
    \[\Gfpf_z \in \ZZ[\beta][x_1,x_2,\dots]\qquad\text{for $n \in 2\PP$ and $z \in \ISp_n$}\]
    such that
    $ \Gfpf_z +  \ILambda_n[\beta] = [\fX_z] \in \CK(\Fl_n)$
    and $\Gfpf_z = \Gfpf_{z \times 21}$.
    \end{theorem}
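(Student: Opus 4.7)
The plan is to mirror the proof of Theorem~\ref{intro-thm1}, substituting Wyser and Yong's divided-difference construction for the symplectic case in place of the classical construction of Grothendieck polynomials.

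For existence, I would fix a distinguished base involution $z_0 \in \ISp_n$ for which the orbit closure $\fX_{z_0}$ admits an explicit polynomial representative $\Gfpf_{z_0} \in \ZZ[\beta][x_1,\dots,x_n]$ (for instance, the class of a point, or an explicit determinantal or Pfaffian formula, depending on convention). Every other $z \in \ISp_n$ can be reached from $z_0$ by a sequence of moves $z' \mapsto s_i z' s_i$ descending in the involution weak order on $\ISp_n$, and I would then define $\Gfpf_z$ by applying the corresponding isobaric divided-difference operators along such a chain. The key geometric input, due to Wyser and Yong, is that these operators match the $K$-theoretic pushforwards along the induced $B^+$-equivariant fibrations between $\Sp_n$-orbit closures, so that the resulting polynomial is well defined (independent of the chain chosen) and represents $[\fX_z] \in \CK(\Fl_n)$.

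Next I would verify the stability identity $\Gfpf_z = \Gfpf_{z \times 21}$. Here $z \times 21 \in \ISp_{n+2}$ is obtained by appending a new $2$-cycle to $z$, and a careful comparison of the divided-difference chains in $\ISp_n$ and $\ISp_{n+2}$ shows that the extra operators introduced in the larger group involve only the new variables $x_{n+1}$ and $x_{n+2}$ and hence act trivially on $\Gfpf_z$. Alternatively, one can argue geometrically via the natural embedding $\Fl_n \hookrightarrow \Fl_{n+2}$ compatible with the inclusion $\Sp_n \hookrightarrow \Sp_{n+2}$, matching orbit closures on both sides.

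For uniqueness, let $\{f_z\}$ be any second family satisfying the same two conditions. Fix $z \in \ISp_n$. Each $f_z$ and $\Gfpf_z$ involves only finitely many variables, say $x_1,\dots,x_N$. By hypothesis $f_z - \Gfpf_z \in \ILambda_m[\beta]$ holds for every $m \in 2\PP$ of the form $m = n + 2k$, using that both families are stable under $z \mapsto z \times 21$. An elementary bookkeeping argument then shows that a nonzero polynomial in $x_1,\dots,x_N$ of degree $d$ cannot lie in $\ILambda_m[\beta]$ once $m > N + d$, since expressing it as a $\ZZ[\beta][x_1,\dots,x_m]$-combination of nonconstant symmetric polynomials in $x_1,\dots,x_m$ forces symmetry constraints involving variables that do not appear in the polynomial. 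Hence $f_z = \Gfpf_z$.

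The main obstacle is the existence step: establishing that the isobaric divided-difference operators truly compute the $K$-theoretic pushforwards for the $\Sp_n$-orbit maps. The required case analysis is more delicate than in the Schubert setting because certain covers in the involution weak order correspond to maps whose image has the same dimension as their source (a ``cover collapsing'' phenomenon absent for $S_n$), and the corresponding $K$-class formula must be verified in each case. This is precisely the content of Wyser and Yong's work, which I would invoke as a black box, reserving the novel parts of this paper for the subsequent sections on stability and explicit formulas.
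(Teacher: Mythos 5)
Your overall architecture is sound, and your uniqueness step matches the paper's, but your existence/stability route differs from the paper's primary one. The paper constructs $\Gfpf_z$ uniformly (for both $\O$ and $\Sp$) as the $T$-equivariant class $[\fMX_z]\in K_T(\MSp_n)$ of a skew-symmetric matrix Schubert variety, transported to $\CK(\Fl_n)$ via $g\mapsto g\Omega^{\Sp}_n g^\top$ (Theorems~\ref{thm:same-pol} and \ref{welldef-thm}); stability is then the geometric observation that the corner-deletion map $\MSp_{n+1}\to\MSp_n$ pulls $\fMX_z$ back to $\fMX_{z\times 1}$, combined with $\fMX_{z\times 21}=\fMX_{z\times 1^2}$ because skew-symmetric matrices have even rank (Theorem~\ref{thm:poly-stability}). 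Your route through Wyser--Yong's divided-difference recursion is legitimate --- the paper itself uses it in Section~\ref{sp-case-sect} to prove Theorem~\ref{sp-thm1} and to identify the two constructions --- and your uniqueness argument is the same idea as Lemma~\ref{ilam-lem}: a nonzero polynomial of bounded degree in finitely many variables escapes $\ILambda_m[\beta]$ for $m$ large. Your ``elementary bookkeeping'' should be made precise, e.g.\ by expanding in Schubert polynomials $\fkS_w$ with $w\in S_m$, which form a basis of $\ZZ[x_1,\dots,x_m]/\ILambda_m$; the heuristic about ``symmetry constraints'' is not by itself a proof.

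Two points need repair. First, your claim that the extra divided-difference operators in the chain for $z\times 21$ ``involve only $x_{n+1},x_{n+2}$ and hence act trivially'' is false: the chain from the longest element of $S_{n+2}$ down to $z\times 21$ is not the chain for $z$ with extra steps appended (for instance $\Gfpf_{2143}=\varpi_2\varpi_1\Gfpf_{4321}$ uses operators in $x_1,x_2,x_3$, while $\Gfpf_{21}$ requires no operators at all), and $\varpi_i$ never acts as the identity ($\varpi_i f=-\beta f$ when $f$ is symmetric in $x_i,x_{i+1}$). You must simply cite \cite[Theorem 4]{WyserYong} for stability or use the geometric argument. Second, Wyser and Yong work in ordinary $K$-theory with $\beta=-1$ conventions; passing to the connective $\ZZ[\beta]$ statement requires the rescaling $\Gfpf_z=\beta^{-\ellfpf(z)}\Upsilon^{\Sp}_z(\beta x_1,\dots,\beta x_n)$, a check that no negative powers of $\beta$ survive, and Proposition~\ref{k-ck-prop} relating $[\fX_z]_K$ to $[\fX_z]_{\CK}$. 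This is precisely the ``not entirely trivial'' translation the paper carries out, and your sketch skips it entirely.
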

    
    The derivation of this statement from the results in \cite{WyserYong},
    which is not entirely trivial, is explained in Section~\ref{sp-case-sect}.
   The following theorem is new:
    
\begin{theorem} \label{thm:intro-iX-polynomials}
 There are unique polynomials
    \[ \iG_z \in \ZZ[\beta][x_1,x_2,\dots]\qquad\text{for $n \in \PP$ and $z \in I_n$}\]
    such that
    $ \iG_z +  \ILambda_n[\beta] = [\iX_z] \in \CK(\Fl_n)$
    and $\iG_z = \iG_{z \times 1}$.
   \end{theorem}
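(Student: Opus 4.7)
The plan is to mirror the proofs of Theorem~\ref{intro-thm1} and the symplectic analogue Theorem~\ref{thm:intro-fX-polynomials}, now for the involution Bruhat order on $I_n$. Uniqueness is straightforward: if $\{p_z\}$ and $\{q_z\}$ both satisfy the hypotheses, then for each fixed $z$ the difference $f = p_z - q_z$ is a polynomial in finitely many variables satisfying $f \in \ILambda_n[\beta]$ for all $n$ sufficiently large, using $f = p_{z \times 1^m} - q_{z \times 1^m}$ with $m$ arbitrary. Since the staircase monomials $x_1^{a_1} \cdots x_{n-1}^{a_{n-1}}$ with $0 \le a_i \le n-i$ form a free $\ZZ[\beta]$-basis of $\CK(\Fl_n) \cong \ZZ[\beta][x_1,\dots,x_n]/\ILambda_n[\beta]$, taking $n$ larger than the degree and number of variables of $f$ forces $f = 0$.

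For existence and stability together, the natural approach is to construct $\iG_z$ via the symmetric matrix Schubert varieties $\iMX_z$ referenced in the abstract (the orthogonal analogues of Knutson--Miller's matrix Schubert varieties). Because these matrix varieties live in a fixed infinite-dimensional ambient space, the embedding $I_n \hookrightarrow I_{n+1}$, $z \mapsto z \times 1$, is realized geometrically as a trivial enlargement, so any polynomial representing $[\iMX_z]$ is intrinsically the same for $z$ and $z \times 1$. Meanwhile, cutting down to the first $n$ rows and columns identifies $\iMX_z$ (via a principal-bundle quotient) with $\iX_z \subset \Fl_n$, ensuring $\iG_z + \ILambda_n[\beta] = [\iX_z]$. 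Both conditions of the theorem would then hold simultaneously by defining $\iG_z$ as the representative of $[\iMX_z]$.

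The main obstacle is to show that $\iG_z$ exists as an honest polynomial representing $[\iMX_z]$ and that its image in $\CK(\Fl_n)$ really coincides with $[\iX_z]$. Following the Knutson--Miller recipe, we would establish (i) a Groebner-degeneration or transition-equation formula for $[\iMX_z]$ that recursively reduces to a base case; (ii) compatibility of these recursions with the isobaric divided differences $\vartheta_i = \pi_i^{(\beta)}$ along covers $z \leadsto s_i z s_i$ in the involution Bruhat order, so that $\iG_z = \vartheta_i \iG_{s_i z s_i}$ when $s_i z s_i$ properly dominates $z$; and (iii) an explicit base case, for which Anderson's Pfaffian formula invoked in the abstract should provide a manifestly stable starting polynomial for a dominant (or otherwise minimal) involution. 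Verifying (i) and (iii) in the symmetric matrix setting is the principal technical work; once they are in place, (ii) follows from the braid relations satisfied by the $\vartheta_i$ and lets us define $\iG_z$ for every $z \in I_n$, completing the construction and—combined with the uniqueness argument above—proving the theorem.
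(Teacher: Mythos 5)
Your uniqueness argument is correct and is essentially the paper's: the paper proves $\bigcap_i \ILambda_{n_i}=0$ using the Schubert polynomial basis of $\ZZ[x_1,\dots,x_N]/\ILambda_N$ rather than the staircase monomials, but the two are interchangeable. Your overall framework for existence---defining $\iG_z$ as the equivariant $K$-class of the symmetric matrix Schubert variety $\iMX_z$, getting stability from the fact that $z\mapsto z\times 1$ does not change the defining rank conditions, and transporting the class to $[\iX_z]\in\CK(\Fl_n)$ via the map $g\mapsto g\Omega_n^{\O}g^\top$---is also exactly the paper's strategy.

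However, your proposed execution of the existence step has a genuine gap. In item (ii) you assume a divided-difference recursion $\iG_z=\vartheta_i\,\iG_{s_izs_i}$ along covers in the involution Bruhat order, reducing everything to a base case. No such recursion exists for the orthogonal family: the paper states explicitly (citing the geometric obstructions in \cite[\S2.2]{WyserYong}) that the $\iG_z$ \emph{cannot} be characterized by divided difference operators, and it only proves $\varpi_i\iG_z=\iG_{s_izs_i}$ under the strong additional hypothesis that both $z$ and $s_izs_i$ are vexillary, via a rational-singularities argument. So your recursion cannot reach a general $z\in I_n$, and the Knutson--Miller-style induction collapses. The point you flag as ``the main obstacle''---that $[\iMX_z]$ is represented by an honest polynomial of the correct lowest degree---actually requires no recursion at all: since $\MO_n$ is a linear representation of $T$, the class $[\iMX_z]\in K_T(\MO_n)\cong R(T)$ is computed as a quotient of multigraded Hilbert series (Theorem~\ref{cone-thm}), is automatically a polynomial in the $a_i$, and after $a_i\mapsto 1+\beta x_i$ has lowest degree exactly $\codim(\iMX_z)$ by \cite[Claim 8.54]{MillerSturmfels}. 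That, together with the flatness of $\sigma_n^{\O}$ and the identification $(\sigma_n^{\O})^{-1}(\iMX_z)=B\cdot\iX_z$, is what makes the paper's existence proof work without any inductive formula; Anderson's Pfaffian formula is not needed here and is used in the paper only later, for vexillary $z$ and the stable limit.
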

   
   We refer to 
      $\Gfpf_z$ and $\iG_z$ as \emph{symplectic} and \emph{orthogonal  Grothendieck polynomials}. 
      Setting $\beta=0$ transforms these functions to the \emph{(fixed-point-free) involution Schubert polynomials} $\iS^\fpf_z$ and $\iS_z$ studied in \cite{HMP1,HMP3,HMP5,WyserYong}.
The latter represent the cohomology classes of the orbit closures $\fX_z$ and $\iX_z$. 

      Wyser and Yong
      \cite{WyserYong} give a recursive method for computing $\Gfpf_z$ involving divided difference operators; see Theorem~\ref{sp-thm1}.
       By contrast, no simple algebraic formulas for computing $\iG_z$ are known for general $z \in I_n$.
       This notably differs from the situation for the involution Schubert polynomials $\iS_z$,
       which can again be characterized using divided differences \cite{WyserYong}.

We prove Theorems~\ref{thm:intro-fX-polynomials} and \ref{thm:intro-iX-polynomials} in a uniform way
in Section~\ref{stab-sect}
 by adapting an idea of Knutson and Miller \cite{KnutsonMiller}. The $B^+$-orbits on $B\backslash \GL_n = \Fl_n$ are naturally in bijection with the $B \times B^+$-orbits on $\GL_n$. The closures $M_w$ of the latter orbits in the space $\Mat_n$ of $n \times n$ matrices are known as \emph{matrix Schubert varieties}. 

Let $T$ denote the torus of invertible diagonal matrices in $\GL_n$.
Knutson and Miller prove that the class $[M_{w}]_T$ in the $T$-equivariant $K$-theory ring $K_T(\Mat_n)\cong\ZZ[x_1, x_2,\ldots, x_n]$
is the polynomial obtained from  $\fkG_{w}$ by setting $\beta=-1$.
Using this fact, one can show that $\fkG_{w} + I\Lambda_n[\beta] =  [X_w] \in \CK(\Fl_n)$.

The $\K_n$-orbits on $\Fl_n$ are in bijection with the $B$-orbits on $\GL_n/\K_n$. Embedding $\GL_n/\K_n$ as an open dense subset of the space of symmetric matrices $\MO_n$ or skew-symmetric matrices $\MSp_n$, as appropriate, and taking the closures of these $B$-orbits gives a family of \emph{(skew-)symmetric matrix Schubert varieties} $\KMX_z$; see Definition~\ref{defn:inv-matrix-schubert}.

The following is a consequence of our results in Section~\ref{equi-sect}:

\begin{theorem}
Fix $\K \in \{\O,\Sp\}$ and let $z \in I_n$.
Assume $n$ is even and $z \in \ISp_n$ if $\K = \Sp$. The $T$-equivariant class $[\KMX_z]_T \in K_T(\MK_n) \cong \ZZ[x_1,x_2, \ldots, x_n]$ is then the polynomial 
obtained from $\KG_z$ by setting $\beta=-1$.
\end{theorem}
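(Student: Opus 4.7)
The plan is to prove a connective refinement of the theorem that specializes to the desired statement. Let $p_z \in \ZZ[\beta][x_1, \ldots, x_n]$ denote the polynomial corresponding to the connective equivariant $K$-class $[\KMX_z]_{\CK,T}$ under the canonical isomorphism $\CK_T(\MK_n) \cong \ZZ[\beta][x_1, \ldots, x_n]$, which is available because $\MK_n$ is a $T$-representation. I will show $p_z = \KG_z$ as polynomials; setting $\beta = -1$ then recovers the theorem, since ordinary equivariant $K$-theory is obtained from connective equivariant $K$-theory by that specialization.

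To show $p_z = \KG_z$, I invoke the uniqueness clauses in Theorems~\ref{thm:intro-fX-polynomials} and \ref{thm:intro-iX-polynomials}. This reduces the task to verifying two properties of $p_z$, viewed inside the larger ring $\ZZ[\beta][x_1, x_2, \ldots]$: first, that $p_z + \ILambda_n[\beta] = [\KX_z] \in \CK(\Fl_n)$; and second, that $p_z = p_{z \times c}$, with $c = 1$ in the orthogonal case and $c = 21$ in the symplectic case. The second property is nontrivial: it requires that $p_{z \times c}$, a priori a polynomial in $x_1, \ldots, x_{n+r}$, be independent of the new variables and equal to $p_z$.

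The first property should follow by adapting the Knutson--Miller dictionary to the symmetric or skew-symmetric setting. The key geometric inputs are the open embedding $\GL_n/\K_n \hookrightarrow \MK_n$ and the identification of $\K_n$-orbits on $\Fl_n$ with $B$-orbits on $\GL_n/\K_n$. Through the resulting functorial maps between $\CK_T(\MK_n)$ and $\CK(\Fl_n)$, one expects $[\KMX_z]_{\CK,T}$ to descend to $[\KX_z]$, with the kernel of the composite map realized precisely as $\ILambda_n[\beta]$. Making this precise is the expected content of Section~\ref{equi-sect}, and the argument should parallel the ordinary type A case.

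The stability property is the principal obstacle. One would need to realize $\KMX_{z \times c}$ inside $\MK_{n+r}$ as a $T$-equivariant product $\KMX_z \times V$ for some $T$-invariant linear subspace $V$ whose contribution to the connective equivariant $K$-class is trivial, i.e.\ becomes $1$ under the polynomial ring identification. Combinatorially, extending $z$ by a fixed point or by a $2$-cycle at the end corresponds to appending a trivial diagonal block to the relevant matrices, which makes such a factorization plausible. The delicate point is to verify that the rank and pfaffian conditions cutting out $\KMX_{z \times c}$ really do decompose into those cutting out $\KMX_z$ together with the extra affine factor, and --- especially in the symplectic case --- that the $T$-weights on the additional $2 \times 2$ skew-symmetric block assemble into a class of $1$ rather than a nontrivial polynomial in the new variables.
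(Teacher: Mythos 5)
Your overall architecture is sound and is essentially the paper's own development run in reverse: in the paper, $\KG_z$ is \emph{defined} (Definition~\ref{main-def}) directly from $[\KMX_z]_T$ by substituting $a_i\mapsto 1+\beta x_i$ into the Laurent polynomial $[\KMX_z]\in K_T(\MK_n)\cong\ZZ[a_1^{\pm1},\dots,a_n^{\pm1}]$ and dividing by $(-\beta)^{\codim(\KMX_z)}$, so the stated theorem is nearly immediate once one knows $\KG_z\in\ZZ[\beta][x_1,\dots,x_n]$. The substance then lies in proving exactly the two properties you isolate (these are Theorems~\ref{welldef-thm} and \ref{thm:poly-stability}), which is also what makes the uniqueness characterization of Theorems~\ref{thm:intro-fX-polynomials} and \ref{thm:intro-iX-polynomials} available in the first place. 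Invoking that uniqueness is therefore legitimate and not circular, but it saves you no work: you still must prove both (a) and (b).

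Two concrete gaps remain. First, you posit a canonical isomorphism $\CK_T(\MK_n)\cong\ZZ[\beta][x_1,\dots,x_n]$ together with a $\beta=-1$ specialization recovering $K_T$; no equivariant connective theory is constructed in the paper, and you would have to build one. The paper sidesteps this by staying in ordinary $K_T(\MK_n)\cong R(T)$, where $[\KMX_z]$ is an honest polynomial in the $a_i$, and by producing the $\beta$-dependence by hand; the nontrivial point is that dividing by $(-\beta)^{\codim(\KMX_z)}$ still yields a polynomial in $\beta$, which follows from the lowest-degree bound \cite[Claim 8.54]{MillerSturmfels} for $K$-polynomials computed via multigraded Hilbert series. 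Second, both verification steps are left as sketches precisely where the content lies. For (a), the needed inputs are that $\sigma^\K_n:g\mapsto g\Omega_n^\K g^\top$ is flat (a fiber bundle onto $\GL_n/\K_n$ followed by an open immersion into $\MK_n$) with $(\sigma^\K_n)^{-1}(\KMX_z)=B\cdot\KX_z$, combined with the chain $K(\Fl_n)\cong K_B(\GL_n)\cong K_T(\GL_n)$ and the change of variables $x_i\mapsto-\beta x_i$ of Corollary~\ref{fair-cor}. For (b), the right mechanism is not a product decomposition you must engineer but the flat corner projection $p:\MK_{n+1}\to\MK_n$, $A\mapsto A_{[n][n]}$, which satisfies $p^{-1}(\KMX_z)=\KMX_{z\times1}$ by the essential-set description; on representation rings $p^*$ is just the inclusion $R(T_n)\hookrightarrow R(T_{n+1})$, so the new variables genuinely do not appear. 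The symplectic difficulty you flag is resolved by an observation you are missing: $\fMX_{z\times21}=\fMX_{z\times1^2}$ because the rank condition at position $(n+1,n+1)$ is automatic — the rank of a skew-symmetric matrix is even — so the $z\times21$ case reduces to two applications of the $z\times1$ case rather than requiring a separate analysis of the extra $2\times2$ block.
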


We now turn to analogues of Theorem~\ref{intro-thm2}. 
The next result follows from Theorem~\ref{intro-thm2} and Corollary~\ref{gsp-cor}:

\begin{theorem} \label{intro-thm4-sp}
        There are unique symmetric functions 
$ \GSp_z$ for each $n \in 2\PP$ and $z \in \ISp_n$
such that
$\GSp_z(x_1,\dots,x_n) = \Gfpf_{(21)^N\times z}(x_1,\dots,x_n)$
for all $N\geq n$.
\end{theorem}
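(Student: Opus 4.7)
The plan is to reduce the claimed stability for symplectic Grothendieck polynomials to the known stability for ordinary Grothendieck polynomials (Theorem~\ref{intro-thm2}), using as a bridge the relationship between $\Gfpf$ and $\fkG$ supplied by Corollary~\ref{gsp-cor}. I expect Corollary~\ref{gsp-cor} to provide an ``atomic'' decomposition of the form $\Gfpf_z = \sum_{w \in \mathcal{A}(z)} \fkG_w$, where $\mathcal{A}(z) \subseteq S_n$ is a canonical finite set of permutations associated to $z \in \ISp_n$, in analogy with the known decompositions for fixed-point-free involution Schubert polynomials.

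The crucial intermediate step is to verify that this decomposition intertwines the shift operations: concretely, that
\[
\mathcal{A}((21)^N \times z) = \{1^{2N} \times w : w \in \mathcal{A}(z)\}
\]
for each $N \geq 0$. Granting this, one obtains the polynomial identity
\[
\Gfpf_{(21)^N \times z} = \sum_{w \in \mathcal{A}(z)} \fkG_{1^{2N} \times w}.
\]
Applying Theorem~\ref{intro-thm2} term by term, each summand $\fkG_{1^{2N} \times w}(x_1,\dots,x_n)$ equals $G_w(x_1,\dots,x_n)$ once $2N \geq n$, so the entire sequence $\Gfpf_{(21)^N \times z}(x_1,\dots,x_n)$ stabilizes for $N \geq n$ to the value
\[
\GSp_z(x_1,\dots,x_n) := \sum_{w \in \mathcal{A}(z)} G_w(x_1,\dots,x_n).
\]
This gives an explicit symmetric function (as a sum of symmetric functions) with the required stability property. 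Uniqueness of $\GSp_z$ is then automatic from the general fact that a symmetric power series of bounded degree is determined by any of its specializations to sufficiently many variables.

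The main obstacle in the plan is the second step: verifying that the atomic set $\mathcal{A}(z)$ transforms compatibly under $z \mapsto (21)^N \times z$ by left-padding each atom with $1^{2N}$. This is a purely combinatorial assertion about symplectic atoms; once the explicit combinatorial model underlying Corollary~\ref{gsp-cor} is in hand, it should follow from a direct inspection of how the relevant involution Hecke word recursion behaves under the shift, together with the evident bijection between reduced/Hecke data for $z$ and for $(21)^N \times z$. If Corollary~\ref{gsp-cor} is instead phrased as a recursion or stability estimate directly for $\Gfpf$ rather than as an atomic decomposition, the rest of the argument adapts analogously, and the verification of compatibility with the $(21)^N$-padding remains the nontrivial core of the proof.
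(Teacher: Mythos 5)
Your proposal is correct and is essentially the paper's own argument: the atomic decomposition you posit is Theorem~\ref{atom-thm}, namely $\Gfpf_z = \sum_{w \in \HAfpf(z)} \beta^{\ell(w)-\ellfpf(z)} \fkG_w$, the padding compatibility $\HAfpf((21)^N \times z) = \{1^{2N}\times w : w \in \HAfpf(z)\}$ is exactly what the paper extracts from Proposition~\ref{ha-prop}, and applying Theorem~\ref{intro-thm2} termwise is precisely how Corollary~\ref{gsp-cor} (hence Theorem~\ref{intro-thm4-sp}) is obtained. The only cosmetic difference is that the decomposition carries coefficients $\beta^{\ell(w)-\ellfpf(z)}$, which are unaffected by the padding since $\ell(1^{2N}\times w)=\ell(w)$ and $\ellfpf((21)^N\times z)=\ellfpf(z)$.
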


In the orthogonal case, we have only succeeded in proving a partial analogue of Theorem~\ref{intro-thm2}. A permutation is \emph{vexillary} if it avoids the pattern $2143$.
The following is a corollary of Theorem~\ref{final-thm}:

\begin{theorem} \label{intro-thm4-o}        
     There are unique symmetric functions 
$ \GO_z$ for each $n \in \PP$ and vexillary $z \in I_n$
such that
$\GO_z(x_1,\dots,x_n) = \iG_{1^N\times z}(x_1,\dots,x_n)$
for all $N\geq n$.
\end{theorem}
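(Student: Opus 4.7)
The plan is to deduce Theorem~\ref{intro-thm4-o} from Theorem~\ref{final-thm}, which I anticipate will furnish an explicit (likely Pfaffian) formula for $\iG_z$ when $z$ is vexillary, using the degeneracy locus machinery of Anderson mentioned in the abstract. The argument then breaks into three steps: checking that the padding operation preserves vexillariness, establishing stability of $\iG_{1^N \times z}$ as $N$ grows, and deducing uniqueness formally.

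First, I would verify that if $z \in I_n$ is vexillary then so is $1^N \times z$ for every $N \geq 0$. Any occurrence of the forbidden pattern $2143$ in $1^N \times z$ would require two inversions, but the positions $1, 2, \dots, N$ are all fixed points of $1^N \times z$ and their values form an increasing sequence, so none of them can play the role of the ``$2$'' or ``$4$'' in a $2143$ pattern. Hence the $2143$ occurrence must lie entirely in positions $\{N+1,\dots,N+n\}$, contradicting vexillariness of $z$. Consequently $\iG_{1^N \times z}$ falls under the scope of whatever closed-form expression Theorem~\ref{final-thm} provides.

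Second, I would examine how this closed form behaves under $z \mapsto 1 \times z$. The vexillary data of $1^N \times z$ (its shape, flag, or essential set) should differ from that of $z$ only by a shift of indices by $N$, so the Pfaffian or tableau formula for $\iG_{1^N \times z}$ ought to depend on $N$ only through factors supported on variables $x_i$ with $i > n$. Restricting to $x_1, \dots, x_n$ and setting $x_{n+1} = x_{n+2} = \cdots = 0$ should then collapse those factors to $1$, yielding a polynomial in $x_1, \dots, x_n$ independent of $N$ once $N \geq n$. Defining $\GO_z$ as the limit in $\ZZ[\beta][[x_1, x_2, \dots]]$ produces a formal power series; symmetry should follow from the intrinsically symmetric-function nature of $K$-theoretic Schur $P$- or $Q$-type expressions, which is the combinatorial content one expects from Theorem~\ref{final-thm}. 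Uniqueness is automatic: two symmetric functions whose restrictions to $x_1, \dots, x_n$ agree for all large $n$ must coincide.

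The main obstacle is the second step. In the symplectic setting (Theorem~\ref{intro-thm4-sp}), stability is imported directly from the classical Fomin--Kirillov result via a decomposition of $\Gfpf_z$ into ordinary Grothendieck polynomials, bypassing any need to analyze the limit explicitly. For $\iG_z$ no comparable decomposition is known in general, which is precisely the situation flagged in the text after Theorem~\ref{thm:intro-iX-polynomials}; the vexillary hypothesis is essential because it is exactly the regime where the Pfaffian formula tightens enough to make the $N \to \infty$ limit visibly controlled. The delicate point to confirm is thus that no ``new'' contributions creep in from the padded positions, i.e.\ that the Pfaffian's dependence on $N$ really is confined to tail variables, rather than mixing the $x_i$ with $i \leq n$ through the shifted flag data.
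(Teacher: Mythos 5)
Your proposal takes essentially the same route as the paper: the paper deduces the theorem from the Pfaffian formula of Theorem~\ref{thm:GO-pfaffian}, using exactly the facts you isolate --- that $1^m \times z$ is again vexillary with $\DO(1^m\times z) = \{(i+m,j+m) : (i,j) \in \DO(z)\}$, so the sequence of Pfaffian formulas stabilizes, that the limiting entries are symmetric (Theorem~\ref{final-thm} then identifies the limit as $\GQ_{\shO(z)}$), and that uniqueness is formal. The only place your heuristic diverges from the actual mechanism is that the $N$-dependence enters through the upper limits of the Chern-series products $\prod_{m=1}^{p+N}(1+x_m t)\prod_{m=1}^{q+N}(1+\bar{x}_m t)^{-1}$, which genuinely involve the variables $x_i$ with $i\leq n$, rather than through detachable tail factors; nonetheless, after setting $x_{n+1}=x_{n+2}=\cdots=0$ every such product saturates at $\prod_{m=1}^{n}$ once $N\geq n$, so the restricted Pfaffian is independent of $N$ and your conclusion stands.
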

Our proof of Theorem~\ref{intro-thm4-o} relies on an explicit Pfaffian formula for $\iG_z$ when $z$ is vexillary, which we derive by realizing $\iX_z$ as a type C Grassmannian degeneracy locus and applying a formula of Anderson \cite{Anderson2017} for the $K$-theory classes of such loci.

The main result of \cite{Mar} shows that $\GSp_z$ is an $\NN[\beta]$-linear combination of the \emph{K-theoretic Schur P-functions} $\GP_\lambda$ of Ikeda and Naruse \cite{IkedaNaruse}. When $z$ is vexillary, we prove that $\GO_z$ is likewise a  \emph{$K$-theoretic Schur Q-function} $\GQ_{\lambda}$, also introduced in \cite{IkedaNaruse}. 
The degeneracy locus formulas in \cite{Anderson2017} are very complicated
and we find it amazing that the expressions we derive for $\GO_z$ in the vexillary case
coincide exactly with symmetric functions already considered in the literature.

We expect that Theorem~\ref{intro-thm4-o} holds for all involutions $z \in I_n$, and that the resulting power series $\GO_z$ are $\NN[\beta]$-linear combinations of the $\GQ_\lambda$ functions.
Section~\ref{last-sect} discusses several other related open problems.

\subsection*{Acknowledgements}

The first author was supported by Hong Kong RGC Grant ECS 26305218.
We are grateful to Dave Anderson, Bill Fulton, Zach Hamaker, Hiroshi Naruse, and Alex Yong
for many helpful comments.

\section{Preliminaries on connective $K$-theory}

This section provides
an expository overview of 
connective $K$-theory
and then describes a general method of constructing polynomial $K$-theory
representatives for orbit closures in the complete flag variety.

Throughout, the symbols $\beta$, $a_1$, $a_2$, $\dots$, $x_1$, $x_2$, $\dots$ denote commuting indeterminates.
We write $\NN = \{0,1,2,\dots\}$ and $\PP = \{1,2,3,\dots\}$ for the sets of nonnegative and positive integers,
and define $[n] := \{i \in \PP : i \leq n\}$ for $n \in \NN$.
Given $n \in \PP$,
let $S_n$ denote the usual symmetric group of bijections $[n]\to [n]$.
The  \emph{length} of a permutation $w$ is 
$\ell(w) := |\{ (i,j): i<j\text{ and }w(i) > w(j)\}|$.

\subsection{Connective $K$-theory}\label{ck-sect}

Let $X$ be a smooth complex variety.
Recall that the ordinary \emph{$K$-theory ring} of $X$
is 
the Grothendieck group $K(X)$ of coherent sheaves on $X$, equipped with a ring structure induced by the derived tensor product.
 The structure sheaf of any closed subscheme $Z\subseteq X$
has a class in $K(X)$ which we denote by
$[Z]_K$.

Let $K(X,c)$ be the Grothendieck group of coherent sheaves whose support has codimension at least $c \in \ZZ$, so that $K(X,c) = K(X)$ whenever $c \leq 0$ and $K(X,c) = 0$ whenever $c>\dim(X)$.
The derived tensor product leads to a ring structure on $K(X,c)$. 
The next definition
originates in \cite{Cai} but our notation follows \cite{Anderson2017,HIMN}.

\begin{definition}[{See \cite[Appendix A]{Anderson2017} or \cite[\S2.1]{HIMN}}]
The \emph{connective $K$-theory ring} of $X$ is the graded $\ZZ[\beta]$-algebra
\[\CK(X) := \bigoplus_{c \in \ZZ} \CK^c(X)\]
in which $\CK^c(X)$ is the image of 
the natural map $K(X,c) \to K(X,c-1)$, so that $\CK^c(X) = K(X)$ whenever $c\leq 0$. 
The maps $K(X,c) \to K(X,c-1)$ induce maps $\CK^c(X) \to \CK^{c-1}(X)$, and the $\ZZ[\beta]$-algebra structure on $\CK(X)$ is defined by letting $\CK^c(X) \to \CK^{c-1}(X)$ be multiplication by $-\beta$. 
\end{definition}

\begin{example}
A coherent sheaf on $X = \pt$ is a map $\pt \to \{V\}$ for some finite-dimensional
complex vector space $V$. The Grothendieck group $K(\pt) = \ZZ$ is generated by 
the sheaf $\pt \to \{\CC\}$. All sheaves on $\pt$ have 
codimension zero so $K(\pt,c) = \ZZ$ for $c\leq 0$ and $ K(\pt,c) = 0$ for $c>0$.
Identifying $\CK^c(\pt)\cong \ZZ$ for $c \leq 0$ with the free abelian group
$\ZZ\spanning\{(-\beta)^{-c}\}$ lets us write $\CK(\pt) = \ZZ[\beta]$.
\end{example}

Suppose $Z \subseteq X$ is a closed subscheme; its structure sheaf $\mathcal{O}_Z$ has support $Z$, so 
there is a corresponding class  in $ K(X,\codim(Z))$,
whose image  under the natural map $ K(X,\codim(Z)) \to K(X)$ is $[Z]_K$.
 The connective $K$-class of $Z$ is the image of the former class under the natural
 map $K(X,\codim(Z)) \to \CK^{\codim(Z)}(X)$, which we denote by $[Z]_{\CK}$. 
We drop the subscripts from $[Z]_{K}$ or $[Z]_{\CK}$ when these are clear from context.
These classes are related as follows:

\begin{proposition}
\label{k-ck-prop}
There is a $\ZZ[\beta]$-algebra morphism \[\psi : K(X)[\beta,\beta^{-1}] \to \CK(X)[\beta^{-1}]\] with
$\psi([Z]_K) = (-\beta)^{\codim(Z)} [Z]_{\CK}$ for any closed subscheme $Z \subseteq X$.\end{proposition}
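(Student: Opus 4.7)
My approach is to construct $\psi$ from the canonical inclusion of the degree-zero component. Since $\CK^0(X) = K(X)$ by definition and $\CK(X)$ is a graded $\ZZ[\beta]$-algebra with $\beta$ in degree $-1$, I would first observe that the inclusion $K(X) = \CK^0(X) \hookrightarrow \CK(X)$ is a ring morphism: the multiplication on $\CK(X)$ is induced from the pairings $K(X,a) \otimes K(X,b) \to K(X,a+b)$, which at $a = b = 0$ recover the ordinary product on $K(X)$. Extending $\ZZ[\beta]$-linearly to a map $K(X)[\beta] \to \CK(X)$ sending $\beta \mapsto \beta$, and then formally inverting $\beta$ on both sides, yields the desired $\ZZ[\beta]$-algebra morphism $\psi : K(X)[\beta,\beta^{-1}] \to \CK(X)[\beta^{-1}]$.

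To verify the formula, I would use that for a closed subscheme $Z \subseteq X$ of codimension $c$, both $[Z]_K \in K(X)$ and $[Z]_{\CK} \in \CK^c(X)$ are by construction images of the single structure sheaf class $[\mathcal{O}_Z] \in K(X,c)$: the first under the natural map $K(X,c) \to K(X,0) = K(X)$, and the second under $K(X,c) \to \CK^c(X)$. Since the successive maps $\CK^c(X) \to \CK^{c-1}(X)$ are defined as multiplication by $-\beta$ and are induced from $K(X,c) \to K(X,c-1)$, their $c$-fold composition $\CK^c(X) \to \CK^0(X) = K(X)$ sends $[Z]_{\CK}$ to $[Z]_K$, giving the identity
\[
(-\beta)^{\codim(Z)} [Z]_{\CK} = [Z]_K \quad \text{in } \CK(X).
\]
Since $\psi$ acts as the identity on $K(X) \subseteq \CK(X)$, applying $\psi$ and rearranging in $\CK(X)[\beta^{-1}]$ yields $\psi([Z]_K) = (-\beta)^{\codim(Z)} [Z]_{\CK}$, as required.

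The statement is essentially a repackaging of the compatibility between the two natural maps out of $K(X, c)$ in the support filtration, so I do not anticipate any real obstacle. The only point requiring care is the compatibility of ring structures on $K(X) = \CK^0(X)$ and $\CK(X)$, which is built into the construction of connective $K$-theory recalled from \cite{Cai} and \cite[Appendix A]{Anderson2017} and therefore does not need to be re-established here.
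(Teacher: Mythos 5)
Your proposal is correct and follows essentially the same route as the paper: $\psi$ is induced from the identity $K(X) = \CK^0(X)$, and the formula follows because the $c$-fold composition of the maps $\CK^c(X) \to \CK^{c-1}(X)$, i.e.\ multiplication by $(-\beta)^c$, carries $[Z]_{\CK}$ to $[Z]_K$. Your write-up simply spells out in more detail the compatibility of the ring structures and the fact that both classes arise from the single element of $K(X,\codim(Z))$, which the paper leaves implicit.
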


    \begin{proof}
        The map $\psi$ is induced from the identity map $K(X) \to \CK^0(X)$. By definition, the image of $[Z]_{\CK} \in \CK^{\codim(Z)}(X)$ in $\CK^0(X)$ is $(-\beta)^{\codim(Z)} [Z]_{\CK}$, and this element of $\CK^0(X) = K(X)$ is just $[Z]_K$.
        \end{proof}
        
        \subsection{Grothendieck polynomials for Schubert varieties} \label{gr-sect}

Fix a positive integer $n$.
As in the introduction, write $\GL_n$ for the complex general linear group and 
 $B$ for the Borel subgroup of lower triangular matrices in $\GL_n$.
We are primarily interested in the preceding definitions applied to
the \emph{complete flag variety} $\Fl_n  := B\backslash \GL_n$.
For this choice of $X$, one can realize $K(X)$ and $\CK(X)$ as quotients of a polynomial ring. 

For each $i \in [n]$, there is a natural line bundle $L_i$ on $\Fl_n$,
whose fiber over an orbit $Bg \in \Fl_n$ 
is  the quotient $F_i / F_{i-1}$,
where $F_i$ is the subspace of $\CC^n$ spanned by the first $i$ rows of $g \in \GL_n$.
Let $\ILambda_n$ denote the ideal in $\ZZ[x_1,x_2,\dots,x_n]$
generated by the symmetric polynomials without constant term.

\begin{theorem}[{\cite[Theorem 2.6]{HornKirit}}]
\label{iso-thm}
There are isomorphisms
\[
K(\Fl_n) \xrightarrow{\sim} \ZZ[x_1,\dots,x_n] / \ILambda_n
\quand
\CK(\Fl_n) \xrightarrow{\sim} \ZZ[\beta][x_1,\dots,x_n] / \ILambda_n[\beta]
\]
mapping the first Chern class $c_1(L_i^\vee)$ of the line bundle dual to $L_i$ to $x_i$.
\end{theorem}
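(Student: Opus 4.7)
The plan is to combine the Bruhat cell decomposition of $\Fl_n$ with the Borel-type presentation of the $S_n$-coinvariant algebra, treating the two isomorphisms in parallel.

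First I would construct the map: define a $\ZZ[\beta]$-algebra homomorphism
\[
\varphi\colon \ZZ[\beta][x_1,\dots,x_n] \to \CK(\Fl_n),\qquad x_i \mapsto c_1(L_i^\vee).
\]
The relevant relations come from the tautological flag $0 = F_0 \subset F_1 \subset \cdots \subset F_n$ of subbundles of the trivial rank-$n$ bundle on $\Fl_n$, with successive quotients $F_i/F_{i-1} = L_i$. Since $F_n$ is trivial, the splitting principle together with the multiplicativity of the connective $K$-theoretic total Chern class forces $\prod_{i=1}^n(1+x_i t)$ to coincide with $c_t(F_n^\vee)$, which is pulled back from a point and hence has coefficients in $\CK(\pt)=\ZZ[\beta]$. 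Subtracting these constants shows that every symmetric polynomial in $x_1,\dots,x_n$ without constant term vanishes in $\CK(\Fl_n)$, so $\varphi$ factors through $\ZZ[\beta][x_1,\dots,x_n]/\ILambda_n[\beta]$. The ordinary $K$-theory map $x_i \mapsto 1 - [L_i^\vee]$ is constructed in exactly the same way.

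Next I would argue that both sides are free $\ZZ[\beta]$-modules of rank $n!=|S_n|$. By Chevalley's theorem the coinvariant algebra $\ZZ[x_1,\dots,x_n]/\ILambda_n$ is a free $\ZZ$-module of rank $n!$, so its extension of scalars to $\ZZ[\beta]$ has the same rank. For the geometric side, the Bruhat decomposition $\Fl_n = \bigsqcup_{w \in S_n} B\backslash BwB^+$ partitions $\Fl_n$ into $n!$ affine cells of dimensions $\ell(w)$. Iterating the localization long exact sequence for connective $K$-theory (see \cite[Appendix A]{Anderson2017}) together with the homotopy invariance $\CK(X\times \mathbb{A}^1)\cong \CK(X)$ exhibits $\CK(\Fl_n)$ as a free $\ZZ[\beta]$-module of rank $n!$, with a basis given by the Schubert classes $\{[X_w]_{\CK} : w \in S_n\}$.

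To conclude, I would show $\varphi$ is surjective, since a surjection between free $\ZZ[\beta]$-modules of equal finite rank is automatically an isomorphism. Surjectivity follows from a Demazure operator argument: the $K$-theoretic isobaric divided-difference operators act compatibly on both $\CK(\Fl_n)$ and the quotient $\ZZ[\beta][x_1,\dots,x_n]/\ILambda_n[\beta]$, and every Schubert class is obtained from the class $[X_{w_0}]_{\CK}$ of a point by applying a suitable sequence of such operators. The class of the point is visibly in the image of $\varphi$ since it equals the top Chern class of an appropriate quotient of the tautological flag, which is a polynomial in the $x_i$. The ordinary $K$-theory isomorphism is then either proved identically or recovered from the connective statement by specializing via Proposition~\ref{k-ck-prop}.

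The main obstacle will be the rank-$n!$ computation for $\CK(\Fl_n)$ itself: the localization sequence and $\mathbb{A}^1$-homotopy invariance of connective $K$-theory are considerably less classical than their $K$-theoretic counterparts, so invoking them cleanly requires the foundational framework developed in \cite{Cai}.
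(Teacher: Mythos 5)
The paper does not actually prove this statement: it is quoted verbatim from Hornbostel--Kiritchenko \cite[Theorem 2.6]{HornKirit}, so there is no internal proof to compare against. Judged on its own, your architecture (build $\varphi\colon x_i\mapsto c_1(L_i^\vee)$, kill $\ILambda_n[\beta]$ via the Whitney formula for the trivial bundle $F_n$, match ranks, prove surjectivity) is the standard one and is essentially how the cited source proceeds for general oriented cohomology theories. One small point on the relations: you do not need to ``subtract constants'' --- since $\CK^d(\pt)=0$ for $d>0$, the positive-degree Chern classes of a trivial bundle vanish outright, and this is what you need, because $e_d(x)-\gamma_d$ lies in $\ILambda_n[\beta]$ only when $\gamma_d=0$.

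The step that does not go through as written is the rank computation for $\CK(\Fl_n)$. Connective $K$-theory in the algebraic category (Cai's theory, or $\Omega^*\otimes_{\mathbb L}\ZZ[\beta]$) has, like Chow groups and algebraic cobordism, only a \emph{right-exact} localization sequence $\CK(Z)\to\CK(X)\to\CK(U)\to 0$; iterating it over the Bruhat cells shows the $n!$ classes $[X_w]_{\CK}$ generate, but gives no injectivity, so freeness of rank exactly $n!$ does not follow. The standard repair --- and the route taken in \cite{HornKirit} --- is the projective bundle formula: $\Fl_n$ is an iterated projective bundle over a point, so $\CK(\Fl_n)$ is free over $\ZZ[\beta]$ with basis the monomials $x_1^{a_1}\cdots x_n^{a_n}$, $0\le a_i\le n-i$. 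This simultaneously gives the rank $n!$ and the surjectivity of $\varphi$, making your Demazure-operator argument unnecessary; that is just as well, since the compatibility of the geometric pushforward--pullback with $\varpi_i$ on the polynomial ring is itself a nontrivial input (it is essentially the content of Theorem~\ref{ck-thm}), and invoking it here risks circularity with how the present paper organizes its logic.
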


From now on, we identify the rings
$K(\Fl_n) = \ZZ[x_1,x_2,\dots,x_n] / \ILambda_n$
and
$\CK(\Fl_n) = \ZZ[\beta][x_1,x_2,\dots,x_n] / \ILambda_n[\beta]$
via the preceding theorem.
For a closed subscheme $Z \subseteq \Fl_n$,
it is then natural to ask for a polynomial
whose image in these quotient rings gives $[Z]_K$ or $[Z]_{\CK}$.

This question is well-understood for the \emph{Schubert varieties} $X_w$.
Recall that these varieties are the closures of the 
double cosets $BwB^+ \subseteq \Fl_n$, where $B^+\subseteq\GL_n$ is the subgroup of upper triangular matrices and $w$ ranges over the symmetric group
$S_n$, viewed as the subgroup of permutation matrices in $\GL_n$.

Let $s_i = (i,i+1) \in S_n$ for each $i \in [n-1]$.
Given $f \in \ZZ[\beta][x_1,\dots,x_n]$,
let $s_i f$ be the polynomial formed from $f$ by interchanging $x_i$ and $x_{i+1}$,
and define 
\be
\label{divdiff-eq}
 \partial_i f := \tfrac{f -s_i f}{x_i-  x_{i+1}}\quand \varpi_i f := \partial_i( (1+\beta x_{i+1}) f) 
= -\beta f + (1+\beta x_i) \partial_i f.
\ee
We refer to $\partial_i$ and $\varpi_i$ as \emph{divided difference operators}.
Write $w_1w_2\cdots w_n$ for the permutation in $S_n$ with the formula $i \mapsto w_i$.

\begin{thmdef}[See \cite{FK1994}]
\label{groth-thmdef}
The \emph{Grothendieck polynomials} $\{ \fkG_w \}_{w \in S_n}$ are the unique family
in $\ZZ[\beta][x_1,\dots,x_n]$ with 
$\fkG_{n\cdots 321} = x_1^{n-1} x_2^{n-2} \cdots x_{n-1}^1$ 
and $\varpi_i \fkG_w = \fkG_{w s_i}$ for all $w \in S_n$ and $i \in [n-1]$ such that $w(i) > w(i+1)$. 
\end{thmdef}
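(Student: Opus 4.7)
The plan is to follow the classical strategy for constructing Grothendieck polynomials via divided differences. Uniqueness is almost immediate: given two families $\{\fkG_w\}$ and $\{\fkG'_w\}$ satisfying the stated conditions, the initial data agree at $w = w_0 := n\cdots 321$, and a straightforward induction on $\ell(w_0) - \ell(w)$ using the recursion along any descending chain $w_0 \to w_0 s_{k_1} \to w_0 s_{k_1} s_{k_2} \to \cdots \to w$ (which exists because $w_0$ is the longest element of $S_n$) forces $\fkG_w = \fkG'_w$.

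The substance of the theorem is existence. First I would verify three operator identities on $\ZZ[\beta][x_1,\dots,x_n]$: the quadratic relation $\varpi_i^2 = -\beta \varpi_i$, commutation $\varpi_i \varpi_j = \varpi_j \varpi_i$ when $|i-j|\geq 2$, and the braid relation $\varpi_i \varpi_{i+1} \varpi_i = \varpi_{i+1} \varpi_i \varpi_{i+1}$. Rewriting $\varpi_i f = \partial_i\bigl((1+\beta x_{i+1}) f\bigr)$, the quadratic identity follows from the Leibniz rule $\partial_i(fg) = f\,\partial_i g + (\partial_i f)(s_i g)$ together with $\partial_i^2 = 0$ and the fact that $\partial_i f$ is always $s_i$-invariant. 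Commutation is immediate since $\varpi_i$ and $\varpi_j$ act on disjoint variable pairs. The braid relation is a slightly longer but routine computation, or can be deduced from the braid relation for the ordinary $\partial_i$.

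These identities are the defining relations of a $0$-Hecke-type algebra, so by Matsumoto's theorem the operator $\varpi_v := \varpi_{i_1} \varpi_{i_2} \cdots \varpi_{i_k}$ associated to a reduced expression $v = s_{i_1} s_{i_2} \cdots s_{i_k}$ depends only on $v \in S_n$. I would then set
\[
\fkG_w \;:=\; \varpi_{w^{-1} w_0}\bigl(x_1^{n-1} x_2^{n-2} \cdots x_{n-1}\bigr).
\]
The initial condition at $w = w_0$ is automatic. For the recursion, when $w(i) > w(i+1)$ one has $\ell(ws_i) = \ell(w) - 1$ and hence $\ell((ws_i)^{-1} w_0) = \ell(w^{-1} w_0) + 1$; since $(ws_i)^{-1} w_0 = s_i \cdot w^{-1} w_0$, prepending $s_i$ to a reduced expression for $w^{-1} w_0$ yields a reduced expression for $(ws_i)^{-1} w_0$, so $\varpi_{(ws_i)^{-1} w_0} = \varpi_i \cdot \varpi_{w^{-1} w_0}$. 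Applying this identity to $x_1^{n-1} x_2^{n-2} \cdots x_{n-1}$ gives $\fkG_{ws_i} = \varpi_i \fkG_w$, as required.

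The only genuine obstacle is the braid relation for $\varpi_i$, but this is well-known and reduces to comparing two rational expressions in three variables. Everything else is formal manipulation with reduced words and the standard length calculus of $S_n$.
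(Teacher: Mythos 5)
Your proposal is correct. The paper gives no proof of this statement at all --- it is quoted from Fomin--Kirillov \cite{FK1994} --- and your argument is exactly the standard one underlying that reference: uniqueness by descending induction from $w_0$ along reduced chains, and existence by checking that the operators $\varpi_i$ satisfy the quadratic, commutation, and braid relations so that $\varpi_v$ is well defined via Matsumoto's theorem and $\fkG_w := \varpi_{w^{-1}w_0}(x_1^{n-1}\cdots x_{n-1})$ satisfies the recursion. The length computations ($(ws_i)^{-1}w_0 = s_i w^{-1}w_0$ with $\ell(s_i w^{-1}w_0) = \ell(w^{-1}w_0)+1$ when $w(i)>w(i+1)$) are right, and deferring the braid relation for $\varpi_i$ to a routine rational-function check is acceptable.
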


It follows from the last property that $\varpi_i \fkG_w = -\beta \fkG_w$ if $w(i) < w(i+1)$.
It is also not hard to check that $\fkG_w = \fkG_{w\times 1}$ for all $w \in S_n$,
where $w \times1$ denotes the permutation in $S_{n+1}$
with $i \mapsto w(i)$ for $i \in [n]$ and $n+1 \mapsto n+1$.
Less obviously, one always has $\fkG_w \in \NN[\beta][x_1,x_2,\dots,x_n]$ 
\cite[Theorem 2.3]{FK1994}.

\begin{example}
The Grothendieck polynomials for $w \in S_3$ are 
\[
\ba
\fkG_{123} &= 1, \\
\fkG_{213} &= x_1, \\
\ea
\qquad\quad
\ba
\fkG_{132} & =x_1 + x_2 + \beta x_1x_2, \\
\fkG_{231} &= x_1x_2, 
\ea
\qquad\quad
\ba
\fkG_{312} &= x_1^2, \\
\fkG_{321} &= x_1^2 x_2.
\ea
\]
\end{example}

Work of Hudson, extending earlier results of Fulton and Lascoux, shows 
that
the polynomials $\fkG_w$ represent the Schubert classes
$[X_w]$ in connective $K$-theory.
Specifically, the following
 is the special case of \cite[Theorem 1.2]{Hudson}
 obtained
 by taking $V$ to be a trivial vector bundle of rank $n$ over $X=\pt$:

\begin{theorem}[{\cite[Theorem 1.2]{Hudson}}]
\label{ck-thm}
For each $w \in S_n$,
it holds that 
\[ \fkG_{w} +  \ILambda_n[\beta] = [X_{w}] \in \CK(\Fl_n).\]
\end{theorem}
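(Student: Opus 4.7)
The plan is to derive this statement as a direct specialization of Hudson's general degeneracy-locus formula \cite[Theorem 1.2]{Hudson}, which gives the connective $K$-theory class of a degeneracy locus inside a flag bundle $\Fl(V)$ associated with a vector bundle $V$ over a smooth base $X$. Taking $X = \pt$ and $V = \CC^n$ the trivial rank-$n$ bundle makes $\Fl(V)$ the complete flag variety $\Fl_n$, and Hudson's degeneracy loci specialize to the Schubert varieties $X_w = \overline{BwB^+}$ indexed by $w \in S_n$.

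The first concrete task is to verify that each piece of Hudson's data matches ours after this specialization. The two flags of subbundles that Hudson uses to cut out degeneracy loci become the standard and opposite flags of $\CC^n$ stabilized by $B$ and $B^+$. The tautological line bundles on $\Fl(V)$ specialize to the bundles $L_i$ of Section~\ref{gr-sect}, so Hudson's identification of $\CK(\Fl(V))$ with a quotient of a polynomial ring in the Chern roots is compatible with the isomorphism of Theorem~\ref{iso-thm}. Once these identifications are in place, Hudson's explicit polynomial representative descends to a representative for $[X_w]$ in $\CK(\Fl_n)$.

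Next I would match Hudson's polynomial with $\fkG_w$. The most efficient route is through the uniqueness in Theorem-Definition~\ref{groth-thmdef}: Hudson's formula is built recursively from a base case at the longest permutation via a Demazure-type isobaric operator, and one checks that after specialization this operator coincides with $\varpi_i$ in \eqref{divdiff-eq} and that the base polynomial reduces to $x_1^{n-1}x_2^{n-2}\cdots x_{n-1}$. These are bookkeeping calculations, once sign and normalization conventions on $\beta$ are aligned between the two references.

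The genuinely nontrivial content, if one wished to avoid citing \cite{Hudson} and give a self-contained argument, lies in identifying $\varpi_i$ geometrically with the composition $\pi_i^*(\pi_i)_*$ of pullback and pushforward along the forgetful map $\pi_i \colon \Fl_n \to \Fl_n^{(i)}$ that drops the $i$-th subspace in a flag, but carried out in $\CK$ rather than in ordinary $K$-theory. The factor $(1 + \beta x_{i+1})$ in the definition of $\varpi_i$ is exactly the Todd-class correction arising when a structure sheaf is pushed along a projective-line bundle in $\CK$, and the need to track the codimension grading on $\CK$ when combining these pushforwards with the base-case class of a point in $\Fl_n$ is what makes this the main obstacle to a direct geometric proof.
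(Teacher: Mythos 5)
Your proposal is correct and follows exactly the route the paper takes: Theorem~\ref{ck-thm} is obtained by specializing Hudson's \cite[Theorem 1.2]{Hudson} to a trivial rank-$n$ bundle over a point, after which Hudson's representative is identified with $\fkG_w$ (the paper leaves the bookkeeping of matching conventions implicit, which you have merely spelled out).
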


We typically suppress the parameter $\beta$ in our notation,
but for the moment write $\fkG_w^{(\beta)} = \fkG_w$ for $w \in S_n$.
The \emph{Schubert polynomial} $\fkS_w$ of a permutation $w \in S_n$ (see \cite[Chapter 2]{Manivel})
is then $\fkG_w^{(0)}$. It follows that $\{\fkG_w\}_{w \in S_n}$ are linearly independent by \cite[Proposition 2.5.3]{Manivel}.

Some references use the term ``Grothendieck polynomial'' to refer to the polynomials $\fkG_w^{(-1)}$.
One loses no generality in setting $\beta=-1$ since one can show by downward induction on permutation length that
\be\label{beta-rel} (-\beta)^{\ell(w)}\fkG^{(\beta)}_w =  \fkG_w^{(-1)}(-\beta x_1, -\beta x_2,\dots,-\beta x_n).\ee
This lets us translate any formulas in $\fkG_w^{(-1)}$ to formulas 
in $ \fkG_w = \fkG_w^{(\beta)}$.
The specialization $\beta=-1$ is natural since it corresponds to ordinary $K$-theory:

\begin{theorem}[{\cite[Theorem 3]{FultonLascoux}}]
\label{prek-thm}
For each $w \in S_n$,
it holds that 
\[ \fkG_{w}^{(-1)} +  \ILambda_n = [X_{w}] \in K(\Fl_n).\]
        \end{theorem}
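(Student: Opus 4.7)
The plan is to deduce Theorem~\ref{prek-thm} from Theorem~\ref{ck-thm} by specializing the parameter $\beta$ to $-1$. Concretely, I would establish a $\ZZ[\beta]$-algebra surjection $\CK(\Fl_n) \twoheadrightarrow K(\Fl_n)$ sending $\beta \mapsto -1$ and $[Z]_{\CK} \mapsto [Z]_K$ for every closed equidimensional subscheme $Z \subseteq \Fl_n$, and then push the identity $\fkG_w + \ILambda_n[\beta] = [X_w]_{\CK}$ through this specialization.

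The construction of the specialization map is a direct unpacking of the definitions in Section~\ref{ck-sect}. By construction, $\CK^c(\Fl_n)$ is the image of $K(\Fl_n, c) \to K(\Fl_n, c-1)$, and multiplication by $-\beta$ is the map $\CK^c(\Fl_n) \to \CK^{c-1}(\Fl_n)$ induced by the further natural map $K(\Fl_n, c-1) \to K(\Fl_n, c-2)$. Reducing modulo $(1+\beta)$ collapses the resulting tower into a single copy of $\CK^0(\Fl_n) = K(\Fl_n)$, and a class $[Z]_{\CK} \in \CK^{\codim(Z)}(\Fl_n)$ maps under this quotient to its image in $K(\Fl_n)$, which by construction is $[\mathcal{O}_Z] = [Z]_K$. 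Alternatively, one can derive the same specialization from Proposition~\ref{k-ck-prop}, by checking that the morphism $\psi$ descends to an isomorphism modulo $(1+\beta)$.

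Next, I would verify that under the isomorphisms of Theorem~\ref{iso-thm}, this specialization corresponds to the evident ring map
\[
\ZZ[\beta][x_1,\dots,x_n]/\ILambda_n[\beta] \longrightarrow \ZZ[x_1,\dots,x_n]/\ILambda_n
\]
sending $\beta \mapsto -1$. This compatibility is automatic, since both isomorphisms are characterized by sending the first Chern class $c_1(L_i^\vee)$ (in $\CK$ or $K$, respectively) to $x_i$. Applying this specialization to the identity $\fkG_w + \ILambda_n[\beta] = [X_w]_{\CK}$ from Theorem~\ref{ck-thm} then immediately yields $\fkG_w^{(-1)} + \ILambda_n = [X_w]_K$ in $K(\Fl_n)$, as required.

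The only step that really requires care is verifying the interplay between the codimension filtration defining $\CK$ and the specialization $\beta \mapsto -1$, but this is a short bookkeeping exercise with the definitions rather than a substantive obstacle. Indeed, since the result is classical, one could equally well dispatch the theorem by directly invoking \cite[Theorem~3]{FultonLascoux}.
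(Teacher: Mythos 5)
Your argument is sound, but it is worth being clear that the paper does not prove this statement at all: Theorem~\ref{prek-thm} is simply quoted from Fulton and Lascoux, exactly as Theorem~\ref{ck-thm} is quoted from Hudson. What you have done instead is derive the $K$-theoretic statement from the connective one by constructing the specialization $\CK(\Fl_n)/(1+\beta) \cong K(\Fl_n)$, under which $[Z]_{\CK}\mapsto [Z]_K$ and, via the compatibility of the two isomorphisms in Theorem~\ref{iso-thm} with Chern classes, $\fkG_w\mapsto \fkG_w^{(-1)}$. This is legitimate within the paper's logical structure, since both theorems are external inputs and neither is used to prove the other here; the colimit description of $\CK(X)/(1+\beta)$ is correct because the tower $\CK^c(X)\to\CK^{c-1}(X)$ stabilizes to $K(X)$ for $c\le 0$, and your identification of $[Z]_{\CK}\mapsto[Z]_K$ is exactly the $\beta=-1$ shadow of the computation in the proof of Proposition~\ref{k-ck-prop}. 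Two caveats. First, the logical direction is inverted relative to the literature: Hudson's theorem is the $\beta$-deformation refining Fulton--Lascoux, so as a matter of mathematical history your derivation proves the older result from the newer one; that is fine for this paper but would be circular if one tried to bootstrap it into a self-contained proof of both. Second, the one place where you write ``automatic'' --- that the specialization carries $c_1^{\CK}(L_i^\vee)$ to $c_1^{K}(L_i^\vee)$, so that the square of isomorphisms commutes --- is genuinely the load-bearing step; be careful not to smuggle in Corollary~\ref{fair-cor} here, since the paper proves that corollary \emph{using} Theorem~\ref{prek-thm}. The compatibility you need follows instead from the definition of Chern classes in connective $K$-theory (the image of $c_1^{\CK}(L)$ in $\CK^0=K(X)$ is the class of a divisor, i.e.\ $1-[L^\vee]$), and stating that explicitly would close the only real gap in the write-up.
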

        
We can now describe the map in Proposition~\ref{k-ck-prop}
 for $X=\Fl_n$.
 
\begin{corollary}\label{fair-cor}
         If $X = \Fl_n$ then the map $\psi : K(\Fl_n)[\beta,\beta^{-1}] \to \CK(\Fl_n)[\beta^{-1}]$ in Proposition~\ref{k-ck-prop}
	is the ring homomorphism sending $x_i\mapsto -\beta x_i$ for $i \in [n]$.
\end{corollary}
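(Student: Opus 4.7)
The plan is to exhibit the map $x_i \mapsto -\beta x_i$ as a candidate and show it agrees with $\psi$ everywhere. First I would define a $\ZZ[\beta]$-algebra homomorphism
\[ \varphi : \ZZ[\beta][x_1,\dots,x_n] \to \ZZ[\beta][x_1,\dots,x_n] \]
by $\beta \mapsto \beta$ and $x_i \mapsto -\beta x_i$, and check that it descends to a well-defined map $K(\Fl_n)[\beta,\beta^{-1}] \to \CK(\Fl_n)[\beta^{-1}]$. Well-definedness is immediate because if $f \in \ILambda_n$ is a homogeneous symmetric polynomial of degree $d$, then $\varphi(f) = (-\beta)^d f \in \ILambda_n[\beta]$, so $\varphi$ preserves the relevant ideals.

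Next, I would verify that $\varphi$ and $\psi$ send each Schubert class $[X_w]_K$ to the same element of $\CK(\Fl_n)[\beta^{-1}]$. Combining Theorem~\ref{prek-thm}, equation~\eqref{beta-rel}, and Theorem~\ref{ck-thm} yields
\[ \varphi([X_w]_K) = \fkG_w^{(-1)}(-\beta x_1, \dots, -\beta x_n) = (-\beta)^{\ell(w)}\fkG_w^{(\beta)} = (-\beta)^{\ell(w)}[X_w]_{\CK}. \]
This matches the formula $\psi([X_w]_K) = (-\beta)^{\codim(X_w)}[X_w]_{\CK}$ from Proposition~\ref{k-ck-prop}, using the standard fact that $\codim(X_w) = \ell(w)$ in $\Fl_n$ (also visible from the fact that $\fkG_{w_0} = x_1^{n-1}x_2^{n-2}\cdots x_{n-1}$ has degree $\binom{n}{2} = \ell(w_0)$ and $X_{w_0}$ is a point).

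To finish, I would appeal to the classical fact that the Schubert classes $\{[X_w]_K : w \in S_n\}$ form a $\ZZ$-basis of $K(\Fl_n)$; linear independence was noted just after Theorem~\ref{ck-thm} via \cite[Proposition 2.5.3]{Manivel}, and there are $|S_n| = n!$ of them, matching the rank of $K(\Fl_n)$. Extending scalars, they span $K(\Fl_n)[\beta,\beta^{-1}]$ as a $\ZZ[\beta,\beta^{-1}]$-module. Since $\psi$ and $\varphi$ are both $\ZZ[\beta,\beta^{-1}]$-linear and agree on this spanning set, they are equal; in particular $\psi(x_i) = \varphi(x_i) = -\beta x_i$ for each $i \in [n]$.

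I do not anticipate a genuine obstacle: the argument is essentially a linear-algebraic comparison powered by equation~\eqref{beta-rel}, which already encodes precisely the compatibility between the two normalizations $\beta = -1$ and $\beta$ arbitrary. The only place to be slightly careful is well-definedness of $\varphi$ on the quotient rings, but this is handled by the homogeneity of the symmetric generators of $\ILambda_n$.
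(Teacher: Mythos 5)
Your proposal is correct and follows essentially the same route as the paper: compare $\psi$ with the substitution map on the basis of Schubert classes using Theorems~\ref{prek-thm} and \ref{ck-thm} together with \eqref{beta-rel} and $\codim(X_w)=\ell(w)$, then conclude by linearity. The extra details you supply (well-definedness of the substitution on the quotient via homogeneity, and the explicit spanning argument) are fine but not a different method.
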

                  
                  \begin{proof}
Since $\codim(X_w) = \ell(w)$, it follows from Theorems~\ref{ck-thm} and \ref{prek-thm} that
$\psi(\fkG^{(-1)}_w + \ILambda_n) = (-\beta)^{\ell(w)}\fkG^{(\beta)}_w + \ILambda_n[\beta]$.
By \eqref{beta-rel}, this agrees with the ring homomorphism $\ZZ[x_1,\dots,x_n]/\ILambda_n \to \ZZ[\beta][x_1,\dots,x_n]/\ILambda_n[\beta]$ sending $x_i\mapsto -\beta x_i$ for $i \in [n]$.
As $\{ \fkG^{(-1)}_w + \ILambda_n : w\in S_n\}$ is a basis for $K(\Fl_n)$ by
\cite[Proposition 2.5.3 and Corollary 2.5.6]{Manivel}, we conclude that $\psi$ is equal to the latter map.
    \end{proof}

\subsection{Matrix Schubert varieties}\label{matrix-sect}

For the remainder of this section, $\K$ denotes one of the symbols $\O$ or $\Sp$.
Fix $n \in \PP$ and write $I_n$ and $\ISp_n$ for the respective sets of involutions and fixed-point-free involutions
in the finite symmetric group $S_n$. If $n$ is odd then 
$\ISp_n =\varnothing$.

If $V_1,V_2$ are complex vector spaces and $\alpha : V_1 \times V_2 \to \CC$ is a bilinear form, then we let $\rank(\alpha)$ denote the rank of the map $V_2 \to V_1^*$ given by $v \mapsto \alpha(\cdot,v)$. 
Let $\alpha^{\O}_n$ be a fixed symmetric nondegenerate bilinear form on $\CC^n$.
When $n$ is even, let $\alpha^{\Sp}_n$ be a fixed skew-symmetric nondegenerate bilinear form on $\CC^n$.
Define $\O_n$ to be the subgroup of $\GL_n$ preserving $\alpha_n^{\O}$ and $\Sp_n$ the subgroup preserving $\alpha_n^{\Sp}$. Write $A_{[i][j]}$ for the upper-left $i \times j$ corner of a matrix $A$.

Given $E = Bg \in \Fl_n$ and $i \in [n]$, define $E_i\subseteq \CC^n$ to be the subspace spanned by the first $i$ rows of $g \in \GL_n$;
these spaces do not depend on the choice of $g$.

\begin{definition} \label{defn:orbit-closures}
    Given $\K \in \{ \O,\Sp\}$ and $z \in \I_n$, let
\[
    \KX_z := \{ E \in \Fl_n : \rank(\alpha_n^\K|_{E_i \times E_j}) \leq \rank(z_{[i][j]}) \text{ for $i,j \in [n]$}  \},
\]
where we identify $z$ with its permutation matrix.
\end{definition}

Each $\KX_z$ is a closed subvariety of $\Fl_n$.
The correspondence $z \mapsto \iX_z$
is a bijection from $\I_n$ to the set of closures of the $\O_n$-orbits on $\Fl_n$;
when $n$ is even, $z\mapsto\fX_z$ is likewise a bijection
from $\ISp_n$ to the set of closures of the $\Sp_n$-orbits on $\Fl_n$ \cite{Wyser}.
Although 
we are primarily interested in $\fX_z$ in the case when $z$ is fixed-point-free,
we have defined $\fX_z$ for any involution $z \in \I_{n}$ 
and this flexibility will occasionally be convenient.

Many of the rank conditions in Definition~\ref{defn:orbit-closures} are redundant. The essential rank conditions can be read off from the following diagrams.

\begin{definition}\label{rothe-def}
    Let $z \in \I_n$. The \emph{orthogonal Rothe diagram} of $z$ is 
    \begin{equation*}
        \DO(z) := \{(i, z(j)) : \text{$(i,j) \in [n] \times [n]$ and $z(i) > z(j) \leq i < j$}\}.
    \end{equation*}
    The \emph{symplectic Rothe diagram} of $z$ is 
    \begin{equation*}
        \DSp(z) := \{(i, z(j)) : \text{$(i,j) \in [n] \times [n]$ and $z(i) > z(j) < i < j$}\}.
    \end{equation*}
\end{definition}

The diagrams $\DO(z)$ and $\DSp(z)$ are the subsets of positions in the usual \emph{Rothe diagram}
$D(z) := \{(i, z(j)) : \text{$(i,j) \in [n] \times [n]$, $z(i) > z(j)$, and $i < j$}\}$
that are weakly and strictly below the main diagonal, respectively.

\begin{definition}\label{ess-def}
    The \emph{essential set} of a subset $D \subseteq \PP \times \PP$ is
    \begin{equation*}
        \Ess(D) := \{(i,j) \in D : (i,j+1), (i+1,j) \notin D\}.
    \end{equation*}
\end{definition}

\begin{proposition}[{\cite[Proposition 3.16]{HMP1}}] 
\label{ess-rpo}
    Let $\K \in \{\O,\Sp\}$ and  $z \in \I_n$. Then
\[ 
        \KX_z = \left\{ E \in \Fl_n : \rank(\alpha_n^\K|_{E_i \times E_j}) \leq \rank(z_{[i][j]})\text{ for $(i,j) \in \Ess(\DO(z))$}  \right\}.
\]
    Moreover, if $n$ is even and $z \in \ISp_n$ then
\[
        \fX_z = \left\{ E \in \Fl_n : \rank(\alpha_n^\Sp|_{E_i \times E_j}) \leq \rank(z_{[i][j]})\text{ for $(i,j) \in \Ess(\DSp(z))$}  \right\}.
\]
     \end{proposition}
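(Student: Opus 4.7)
The inclusion $\supseteq$ is immediate since $\Ess(\DO(z)) \subseteq [n]\times[n]$ and $\Ess(\DSp(z)) \subseteq [n]\times[n]$. For the reverse inclusion, the goal is to show that every rank condition $\rank(\alpha_n^\K|_{E_i \times E_j}) \leq \rank(z_{[i][j]})$ from Definition~\ref{defn:orbit-closures} follows from the conditions imposed at essential positions. My plan proceeds through three reductions.

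First, involution symmetry $z = z^{-1}$ gives $z_{[j][i]} = z_{[i][j]}^T$ and hence $\rank(z_{[j][i]}) = \rank(z_{[i][j]})$, while $\epsilon$-symmetry of $\alpha_n^\K$ (with $\epsilon = +1$ for $\O$ and $\epsilon = -1$ for $\Sp$) gives $\rank(\alpha_n^\K|_{E_j \times E_i}) = \rank(\alpha_n^\K|_{E_i \times E_j})$. The rank conditions at $(i,j)$ and $(j,i)$ therefore coincide, so I may restrict to positions with $j \leq i$. In the symplectic case, I claim the diagonal conditions at $(i,i)$ are redundant: both $\rank(\alpha_n^\Sp|_{E_i \times E_i})$ and $\rank(z_{[i][i]})$ for $z \in \ISp_n$ are automatically even (the latter because $z_{[i][i]}$ records the $2$-cycles of $z$ contained in $[i]$), and every even rank allowed by the adjacent strictly below-diagonal constraints is genuinely realized, allowing restriction to $j < i$.

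Second, for a position $(i,j)$ below the diagonal (strictly so in the symplectic case) with $(i,j) \notin D(z)$, the rank condition is either trivial or implied by a condition at a smaller position. Specifically, $(i,j) \notin D(z)$ forces either $z(i) \leq j$ or $z^{-1}(j) \leq i$. In the first case, $\rank(z_{[i-1][j]}) = \rank(z_{[i][j]}) - 1$, so combining the condition at $(i-1,j)$ with the elementary estimate $\rank(\alpha_n^\K|_{E_i \times E_j}) \leq \rank(\alpha_n^\K|_{E_{i-1} \times E_j}) + 1$ recovers the condition at $(i,j)$; the other case is symmetric. Iterating reduces all nontrivial conditions to conditions at positions in $D(z) \cap \{j \leq i\} = \DO(z)$, respectively $D(z) \cap \{j < i\} = \DSp(z)$.

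Third, for $(i,j) \in \DO(z) \setminus \Ess(\DO(z))$, the essential set definition guarantees that at least one of $(i+1,j)$ and $(i,j+1)$ lies in $\DO(z)$, so I may walk east or south within $\DO(z)$ to reach an essential position $(i',j')$. Crucially, movement within $\DO(z)$ never introduces a new $1$ of the permutation matrix $z$ into the corresponding submatrix, so $\rank(z_{[i'][j']}) = \rank(z_{[i][j]})$ throughout the walk. Combined with the monotonicity $\rank(\alpha_n^\K|_{E_i \times E_j}) \leq \rank(\alpha_n^\K|_{E_{i'} \times E_{j'}})$ (from $E_i \subseteq E_{i'}$ and $E_j \subseteq E_{j'}$), the essential condition $\rank(\alpha_n^\K|_{E_{i'} \times E_{j'}}) \leq \rank(z_{[i'][j']}) = \rank(z_{[i][j]})$ directly implies the desired condition at $(i,j)$. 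The symplectic case is identical with $\DSp(z)$ in place of $\DO(z)$. The main obstacle I anticipate is the diagonal analysis in step one of the symplectic case: verifying that the diagonal constraints are truly implied by strictly below-diagonal essential conditions requires a delicate parity argument exploiting the alternating structure of $\alpha_n^\Sp$ and the combinatorics of fixed-point-free involutions, a subtlety entirely absent from the orthogonal case.
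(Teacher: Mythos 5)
The paper gives no proof of this proposition---it is quoted verbatim from \cite[Proposition 3.16]{HMP1}---so there is no internal argument to compare against; your proof is the standard Fulton-style essential-set reduction, which is surely what that reference does. Its structure is sound: the symmetry reduction (using $z=z^{-1}$ and the $(\pm1)$-symmetry of $\alpha_n^\K$), the reduction off the diagram (using that $(i,j)\notin D(z)$ forces $z(i)\leq j$ or $z(j)\leq i$, so the condition at $(i,j)$ follows from the one at $(i-1,j)$ or $(i,j-1)$ together with the rank estimates you state), and the southeast walk inside $\DO(z)$ (adjacent diagram positions leave $\rank(z_{[i][j]})$ unchanged while $\rank(\alpha_n^\K|_{E_i\times E_j})$ is monotone in $(i,j)$) together establish the first displayed equality for both $\K=\O$ and $\K=\Sp$.

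The only place where your write-up falls short of a proof is the step you yourself flag: that for $z\in\ISp_n$ the diagonal conditions follow from the strictly subdiagonal ones. The phrase ``every even rank allowed by the adjacent strictly below-diagonal constraints is genuinely realized'' is not the right justification---no realizability claim is needed---but the parity mechanism you gesture at does close the gap in two lines. One has
\[
\rank(\alpha_n^{\Sp}|_{E_i\times E_i}) \;\leq\; \rank(\alpha_n^{\Sp}|_{E_i\times E_{i-1}})+1 \;\leq\; \rank(z_{[i][i-1]})+1 \;\leq\; \rank(z_{[i][i]})+1,
\]
where the middle inequality is the (already established) condition at $(i,i-1)$; since the left side is even (restriction of a skew form) and $\rank(z_{[i][i]})$ is even (the $1$'s of $z_{[i][i]}$ come from $2$-cycles of $z$ contained in $[i]$), the bound improves to $\rank(z_{[i][i]})$. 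Note that this forces the three ``steps'' to be interleaved into a single induction on $i+j$: the condition at $(i,i-1)$ is itself obtained by further reductions, not assumed outright. With that reorganization and the parity computation written out, your argument is complete.
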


\begin{example}
    Let $z = (1,3) \in \I_3$. Then
    \begin{equation*}
        \DO(z) = \{(1,1),(2,1)\} = \boxed{\begin{array}{ccc}
            \circ & \cdot & \times\\
            \circ & \times & \cdot\\
            \times & \cdot & \cdot 
        \end{array}}
    \end{equation*}
    where elements of $\DO(z)$ are drawn with $\circ$, points $(i,z(i))$ with $\times$, and the diagram is shown in matrix coordinates with $(1,1)$ at the upper left. We have $\Ess(\DO(z)) = \{(2,1)\}$ and $\rank(z_{[2][1]})= 0$, so 
    \begin{equation*}
        \iX_z = \{E \in \Fl_3 : \rank(\alpha_3^\O|_{E_1 \times E_2}) \leq 0\} = \{E \in \Fl_3 : E_2 \subseteq E_1^{\perp}\}.
    \end{equation*}
\end{example}

Let $\MO_n$ (respectively, $\MSp_n$) be the set of complex $n \times n$ matrices that are symmetric (respectively, skew-symmetric).
The space $\MK_n$ contains a family of varieties closely related to $\KX_z$:

\begin{definition} \label{defn:inv-matrix-schubert}
    Given $\K \in \{\O,\Sp\}$ and  $z \in \I_n$, let 
    \begin{equation*}
        \KMX_z := \left\{ A \in \MK_n : \text{$\rank(A_{[i][j]}) \leq \rank(z_{[i][j]})$ for $i,j \in [n]$}  \right\}.
    \end{equation*}
\end{definition}
We call the closed subvariety $\iMX_z$ (respectively, $\fMX_z$) a \emph{symmetric matrix Schubert variety}
(respectively, \emph{skew-symmetric matrix Schubert variety}). If one  allows arbitrary $z \in S_n$ and arbitrary matrices in Definition~\ref{defn:inv-matrix-schubert}, then one 
recovers Knutson and Miller's notion of a \emph{matrix Schubert variety} from \cite{KnutsonMiller}.

The variety $\KMX_z$ is also an orbit closure,
but now for the Borel subgroup $B \subseteq \GL_n$,
which acts on $A \in \MK_n$ by $b : A \mapsto bAb^\top$. The 
maps $z \mapsto \iMX_z$ and $z \mapsto \fMX_z$
are bijections from $I_n $ and $\ISp_n$ to the closures of the $B$-orbits in $\MO_n$ and $\MSp_n$,
respectively; see \cite{BagCher, Cher2011}.
There is an analogue of Proposition~\ref{ess-rpo}:

\begin{proposition} 
\label{ess-rpo2}
    Let $\K \in \{\O,\Sp\}$ and  $z \in \I_n$. Then
\[ 
        \KMX_z = \left\{ A \in \MK_n : \rank(A_{[i][j]}) \leq \rank(z_{[i][j]})\text{ for $(i,j) \in \Ess(\DO(z))$}  \right\}.
\]
    Moreover, if $n$ is even and $z \in \ISp_n$ then
\[
        \fMX_z = \left\{  A \in \MSp_n : \rank(A_{[i][j]}) \leq \rank(z_{[i][j]})\text{ for $(i,j) \in \Ess(\DSp(z))$}  \right\}.
\]
     \end{proposition}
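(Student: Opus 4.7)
The plan is to argue in direct parallel with Proposition~\ref{ess-rpo}, which gives the analogous essential-set reduction for orbit closures in $\Fl_n$. Both results rest on the same abstract input: for any matrix $A$, the function $(i,j) \mapsto \rank(A_{[i][j]})$ is weakly increasing in each coordinate and satisfies $\rank(A_{[i+1][j]}) \leq \rank(A_{[i][j]}) + 1$ (and similarly in $j$). The first step is a symmetry reduction: since $A^T = \pm A$ for $A \in \MK_n$ and $z^T = z$ for $z \in \I_n$, the identities $\rank(A_{[i][j]}) = \rank(A_{[j][i]})$ and $\rank(z_{[i][j]}) = \rank(z_{[j][i]})$ reduce all rank inequalities to positions $(i,j)$ with $i \geq j$.

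A direct inspection of Definition~\ref{rothe-def} yields $\DO(z) = D(z) \cap \{(i,j) : i \geq j\}$ and $\DSp(z) = D(z) \cap \{(i,j) : i > j\}$, where $D(z)$ is the ordinary Rothe diagram; moreover $D(z)$ is symmetric under $(i,j) \leftrightarrow (j,i)$ when $z \in \I_n$. A short combinatorial check then shows that $\Ess(\DK(z))$ records exactly the essential rank conditions of $D(z)$ in the appropriate weakly- or strictly-below-diagonal region. Given $A$ satisfying the rank inequalities at all positions in $\Ess(\DK(z))$, I would then propagate them to the rest of that region using Fulton's classical argument: the function $\rank(z_{[i+1][j]})$ exceeds $\rank(z_{[i][j]})$ exactly when $z(i{+}1) \leq j$ (and analogously in $j$), so a walk outward from essential positions using the monotonicity above verifies all remaining inequalities. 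This is the same mechanism used to prove Proposition~\ref{ess-rpo} in \cite[Proposition 3.16]{HMP1}, with $\rank(A_{[i][j]})$ in place of $\rank(\alpha_n^\K|_{E_i \times E_j})$.

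The hard part will be the behavior on the diagonal in the second statement, where the essential set $\Ess(\DSp(z))$ lies strictly below the diagonal and so does not directly impose the condition $\rank(A_{[i][i]}) \leq \rank(z_{[i][i]})$. To derive this condition, I would use that both ranks are even (the former because $A$ is skew-symmetric, the latter because $z \in \ISp_n$ has no fixed points) and perform a brief induction from the bound $\rank(A_{[i][i-1]}) \leq \rank(z_{[i][i-1]})$ provided by the previous step. When the row-induced rank increase $\rank(A_{[i][i-1]}) - \rank(A_{[i-1][i-1]})$ equals one, parity forces the desired inequality; when it equals zero, one checks using skew-symmetry that the $i$-th column of $A_{[i][i]}$ lies in the span of the first $i-1$ columns, so $\rank(A_{[i][i]}) = \rank(A_{[i-1][i-1]})$ and the bound transfers inductively. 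With these diagonal checks in place, the proof is a verbatim adaptation of the reasoning in \cite[Proposition 3.16]{HMP1}.
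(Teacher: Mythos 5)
Your proposal is correct and matches the paper's approach: the paper's proof consists precisely of the observation that the argument for Proposition~\ref{ess-rpo} in \cite[Proposition 3.16]{HMP1} goes through verbatim after replacing the rank conditions on $\alpha_n^\K|_{E_i\times E_j}$ by rank conditions on $A_{[i][j]}$. The details you supply (the symmetry reduction to $i\geq j$, the Fulton-style propagation from the essential set, and the parity argument on the diagonal in the skew-symmetric case) are exactly the ingredients of that argument.
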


\begin{proof}
One can almost repeat the proof of \cite[Proposition 3.16]{HMP1} verbatim;
the argument in \cite{HMP1} goes through after 
replacing ``$y$'' by ``$z$'' and redefining ``$C_{ij}$'' to be the set of symmetric (when $\K=\O$) or skew-symmetric
(when $\K = \Sp$ and $z \in \ISp_n$) $n\times n$ matrices $A$ with $\rank(A_{[i][j]}) \leq \rank(z_{[i][j]})$.
\end{proof}

\subsection{Grothendieck polynomials for orbit closures}
\label{equi-sect}

The orthogonal and symplectic matrix Schubert varieties have 
canonical polynomial representatives in equivariant $K$-theory.
Here, we use these polynomials to give a uniform definition 
of the Grothendieck polynomials $\iG_z$ and $\Gfpf_z$
described in the introduction.

Suppose $G$ is a linear algebraic group acting on a smooth complex variety $X$. The \emph{$G$-equivariant $K$-theory ring} $K_G(X)$ is the Grothendieck group of $G$-equivariant vector bundles on $X$ with tensor product as multiplication, or equivalently the Grothendieck group of $G$-equivariant coherent sheaves on $X$ (now with the derived tensor product as multiplication). 
If $Z \subseteq X$ is a $G$-invariant subscheme, then we write $[Z] \in K_G(X)$ for the class of its structure
sheaf.

A $G$-equivariant vector bundle over a point is just a representation of $G$, so $K_G(\textrm{pt})$ is the Grothendieck ring $R(G)$ of finite-dimensional complex rational representations of $G$. If $T \cong (\CC^{\times})^n$ is a torus then $K_T(\textrm{pt})$ can be identified with 
the ring $\ZZ[a_1^{\pm 1},a_2^{\pm 1} \ldots, a_n^{\pm 1}]$; the one-dimensional representation on which $(t_1,t_2 \ldots, t_n) \in T$ acts as multiplication by $t_1^{m_1} t_2^{m_2}\cdots t_n^{m_n}$ has class $a_1^{m_1} a_2^{m_2}\cdots a_n^{m_n}$, and every $T$-representation is a direct sum of such one-dimensional representations.

We summarize a few other properties we will need from \cite[\S 5.2]{chriss-ginzburg}:

\begin{itemize}
    
    \item A $G$-equivariant map $f : X \to Y$ between smooth complex varieties defines a pullback $f^* : K_G(Y) \to K_G(X)$, and this assignment is functorial. The pullback of $X \to \pt$ makes the ring $K_G(X)$ into an algebra over $K_G(\pt) \cong R(G)$. The pullbacks $f^*$ are $R(G)$-algebra homomorphisms.
    
    \item If $f : X \to Y$ is a flat morphism (e.g., the projection of a fiber bundle or inclusion of an open subset), then $f^*([Z]) = [f^{-1}(Z)]$ for any $G$-invariant subscheme $Z$. Here, $f^{-1}(Z)$ is the scheme-theoretic inverse image, but if $Z$ and the fibers of $f$ are reduced, then the flatness of $f$ implies $f^{-1}(Z)$ reduced \cite[Proposition 11.3.13]{EGA4}. This will always be the case for us, so we can take $f^{-1}(Z)$ to be the set-theoretic inverse image.

    \item If $V$ is a finite-dimensional linear representation of $G$, then 
    there are isomorphisms $K_G(V) \cong K_G(\textrm{pt}) \cong R(G)$ \cite[Corollary 5.4.21]{chriss-ginzburg}.
    
    \item Given a group homomorphism $\phi : H \to G$, there is a ring homomorphism $K_G(X) \to K_H(X)$ sending $[Z]$ to $[Z]$, since one can view a $G$-equivariant vector bundle as $H$-equivariant via $\phi$. In particular, taking $H$ to be the trivial subgroup of $G$, there is such a map $K_G(X) \to K(X)$.

    \item If $G$ acts freely on $X$, then the pullback of the quotient $X \to X/G$ defines an isomorphism $K(X/G) \xrightarrow{\sim} K_G(X)$.
    
\end{itemize}

For each symbol $\K \in \{\O,\Sp\}$, 
fix an $n\times n$ matrix $\Omega_n^\K$ with $\alpha_n^\K(v,w) = v^\top \Omega_n^\K w$ for all $v, w \in \CC^n$. 
Define $T$ to be the torus of invertible diagonal matrices in $\GL_n$. Each $t \in T$ acts on $\GL_n$ by left multiplication and on $A \in \MK_n$ by $t : A \mapsto tAt$. 
Let $\sigma^\K_n : \GL_n \to \MK_n$ be the $T$-equivariant map with $\sigma^\K_n(g) := g \Omega_n^\K g^\top$
and write 
\[ (\sigma^\K_n)^* : K_T(\MK_n) \to K_T(\GL_n)\]
for its pullback.

If $S \subseteq \Fl_n = B \backslash \GL_n$, then we let $B\cdot S:=\{g \in \GL_n : Bg \in S\}$.
The pullback of the quotient $\GL_n \to T \backslash \GL_n$ is an isomorphism 
$K(T \backslash \GL_n) \xrightarrow{\sim} K_T(\GL_n)$ since $T$ acts freely on $\GL_n$.
The forgetful map 
$K_B(\GL_n) \to K_T(\GL_n)$
is also an isomorphism \cite[\S 5.2.18]{chriss-ginzburg}. 
Composing these maps
gives an isomorphism 
\be\label{ktkt-eq}
K(\Fl_n) = K(B\backslash \GL_n) \xrightarrow{\sim} K_T(\GL_n)
\ee
sending $[Z]$ to $[B \cdot Z]$.
Let $\phi : K_T(\GL_n) \xrightarrow{\sim} K(\Fl_n)$ be the inverse of this map.

\begin{theorem}\label{thm:same-pol}
Choose a symbol $\K \in \{\O, \Sp\}$ and assume $n$ is even if $\K = \Sp$.
The composition 
$K_T(\MK_n) \xrightarrow{(\sigma^\K_n)^*}  K_T(\GL_n) \xrightarrow{\phi} K(\Fl_n)$
maps $[\KMX_z]\mapsto [\KX_z]$
for each $z \in I_n$.
\end{theorem}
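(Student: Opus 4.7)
The plan is to identify $(\sigma^\K_n)^*[\KMX_z]$ with the class of the set-theoretic preimage $(\sigma^\K_n)^{-1}(\KMX_z) \subseteq \GL_n$, show that this preimage coincides with the $B$-saturation $B \cdot \KX_z$, and then apply the explicit description of $\phi$ as the inverse of the isomorphism $[Z] \mapsto [B \cdot Z]$.

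First, I would verify that $\sigma^\K_n$ is flat so that the bullet point on flat pullbacks applies. The map factors as a principal $\K_n$-bundle $\GL_n \twoheadrightarrow \GL_n / \K_n$ (the fiber over $g\Omega_n^\K g^\top$ is the coset $g\K_n$, since $h\Omega_n^\K h^\top = g\Omega_n^\K g^\top$ precisely when $g^{-1}h \in \K_n$), composed with the open immersion $\GL_n / \K_n \hookrightarrow \MK_n$ that identifies the quotient with the Zariski-open subset of invertible symmetric, respectively skew-symmetric, matrices. Both factors are flat with reduced fibers, and $\KMX_z$ is a reduced subvariety of $\MK_n$ as an orbit closure, so combined with \cite[Proposition 11.3.13]{EGA4} (cited just before the theorem) this gives $(\sigma^\K_n)^*[\KMX_z] = [(\sigma^\K_n)^{-1}(\KMX_z)]$ in $K_T(\GL_n)$.

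Next I would compute this preimage explicitly. For $g \in \GL_n$ with rows $r_1, \ldots, r_n$, direct expansion of $A := g \Omega_n^\K g^\top$ gives $A_{ij} = r_i \Omega_n^\K r_j^\top = \alpha_n^\K(r_i, r_j)$, so the northwest submatrix $A_{[i][j]}$ is a Gram matrix for the bilinear form $\alpha_n^\K|_{E_i \times E_j}$ in the bases given by $r_1,\ldots,r_i$ and $r_1,\ldots,r_j$, where $E_k := \mathrm{span}(r_1,\ldots,r_k)$ is the $k$-th subspace of the flag $Bg \in \Fl_n$. Base change preserves rank, so $\rank(A_{[i][j]}) = \rank(\alpha_n^\K|_{E_i \times E_j})$. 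Comparing Definitions~\ref{defn:orbit-closures} and \ref{defn:inv-matrix-schubert}, we read off that $g \Omega_n^\K g^\top \in \KMX_z$ if and only if $Bg \in \KX_z$; equivalently, $(\sigma^\K_n)^{-1}(\KMX_z) = B \cdot \KX_z$.

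Combining these two steps gives $(\sigma^\K_n)^*[\KMX_z] = [B \cdot \KX_z]$ in $K_T(\GL_n)$, and applying $\phi$, which by construction sends $[B \cdot \KX_z]$ back to $[\KX_z]$, finishes the proof. The main point requiring care is the reducedness of the scheme-theoretic preimage $(\sigma^\K_n)^{-1}(\KMX_z)$, since without it the flat pullback could in principle carry a nonreduced structure and the identification with $[B \cdot \KX_z]$ as structure-sheaf classes would require extra argument; once reducedness is handled via the flatness of $\sigma^\K_n$ together with its smooth fibers, the remainder is a matter of the linear-algebra rank computation and the $B$-equivariance built into the definition of $\phi$.
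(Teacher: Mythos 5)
Your proposal is correct and follows essentially the same route as the paper's proof: factor $\sigma^\K_n$ as the quotient $\GL_n \to \GL_n/\K_n$ followed by the open embedding into $\MK_n$ to get flatness, then identify $(\sigma^\K_n)^{-1}(\KMX_z)$ with $B\cdot\KX_z$ via the Gram-matrix rank computation, and finish by the definition of $\phi$. Your extra attention to reducedness of the preimage is a welcome elaboration of a point the paper handles implicitly via its cited flat-pullback facts.
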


\begin{proof}
We just need to show that $(\sigma^\K_n)^*$ maps $[\KMX_z]\mapsto [B\cdot \KX_z]$ for any $z \in \I_n$.
Write $\K_n$ for the group $\O_n$ or $\Sp_n$ corresponding to the symbol $\K$.
The map $\sigma^\K_n$ factors as the quotient map $\GL_n \to \GL_n/\K_n$ followed by the map $\GL_n/\K_n \to \MK_n$ sending $g\K_n\mapsto g\Omega_n^\K g^\top$, which is an isomorphism from $\GL_n/\K_n$ onto the open subset of invertible matrices in $\MK_n$. This implies that $\sigma^\K_n$ is a flat morphism, because it is the composition of two flat morphisms: the projection of a fiber bundle and the inclusion of an open subset.

It now suffices to show that $(\sigma^\K_n)^{-1}(\KMX_z) = B\cdot \KX_z$. Indeed, we have 
$g \in B \cdot \KX_z$ if and only if the rows $g_1, g_2, \dots, g_n$ of $g$ are such that the matrix $A = [\alpha_n^\K(g_p,g_q)]_{p,q \in [n]}$ has $\rank(A_{[i][j]}) \leq \rank(z_{[i][j]})$ for any $i,j \in [n]$. But $A = \sigma^\K_n(g)$, so this condition is equivalent to $\sigma^\K_n(g) \in \KMX_z$.
\end{proof}

One way to realize the
isomorphism $K(\Fl_n) \cong \ZZ[x_1,x_2, \ldots, x_n]/\ILambda_n$
in Theorem~\ref{iso-thm}
 is as follows. 
Let $\Mat_n$ denote the algebra of complex $n\times n$ matrices.
Since $\Mat_n$ is a finite-dimensional representation of $T$
under the action $t : A \mapsto tA$,
the equivariant $K$-theory ring $K_T(\Mat_n)$ is the representation ring $R(T) \cong \ZZ[a_1^{\pm1},a_2^{\pm1} \ldots, a_n^{\pm1}]$, and the following diagram commutes:
  \[
      \begin{tikzcd}
      K_T(\Mat_n) \arrow[d, "\iota^*"] \arrow[r, "\sim"] &     \ZZ[a_1^{\pm 1}, a_2^{\pm1},\ldots, a_n^{\pm 1}] \arrow[d] \\
      \hspace{-1.7cm}K(\Fl_n) \cong K_T(\GL_n) \arrow[r, "\sim"]  &{\displaystyle \frac{\ZZ[a_1^{\pm 1}, a_2^{\pm1},\ldots, a_n^{\pm 1}]}{\langle e_d(a_1,a_2, \ldots, a_n) - {n \choose d} : d \in [n]\rangle}}
      \end{tikzcd}
  \]
Here, the vertical map on the left is the pullback of the inclusion $\iota : \GL_n \hookrightarrow \Mat_n$. Sending $a_i \mapsto 1-x_i$ gives an isomorphism from the ring in the lower right to $\ZZ[x_1, x_2,\ldots, x_n]/\ILambda_n$. This change of variables reflects a general relationship between $K(X)$ and the Chow ring of $X$.

Now suppose $Y \subseteq \Mat_n$ and $Z \subseteq \Fl_n$ are closed subschemes 
such that $\iota^*[Y] := [\iota^{-1}(Y)] = [Z] \in K(\Fl_n) $.
The class  $[Y] \in K_T(\Mat_n)$
may be canonically identified with a Laurent polynomial in $\ZZ[a_1^{\pm 1}, \ldots, a_n^{\pm 1}]$
via the diagram above, and it can be shown that this element is actually a polynomial in $a_1,\ldots,a_n$
\cite[\S6.6]{chriss-ginzburg}.
After applying the change of variables $a_i \mapsto 1-x_i$,
this polynomial becomes a representative for $[Z]$ in 
the quotient $\ZZ[x_1,\ldots,x_n]/\ILambda_n \cong K(\Fl_n)$.

If $V$ is any finite-dimensional linear representation of $T$
and $\sigma :  \Mat_n \to V$ is a $T$-equivariant map, 
then composing the pullback $\sigma^* : K_T(V) \to K_T(\Mat_n)$ with the isomorphism 
$K_T(\Mat_n) \cong R(T)$ coincides with the canonical isomorphism $K_T(V) \cong R(T)$
described by \cite[Corollary 5.4.21]{chriss-ginzburg}.
Therefore, taking $\sigma$ to be the map $\Mat_n \to \MK_n$ with $g \mapsto g \Omega_n^\K g^\top$,
we can repeat everything in the previous paragraph for 
closed subschemes $Y \subseteq \MK_n$.
In particular, to obtain ``canonical'' polynomial representatives for   
the varieties $Z = \KX_z$, we 
can apply the preceding construction with $Y = \KMX_z$:

\begin{definition}\label{main-def} For each $\K \in \{\O,\Sp\}$ and $z \in \I_n$, let 
\[\KG_z \in \ZZ[\beta,\beta^{-1}][x_1,x_2,\dots,x_n]\] be the polynomial
obtained from 
$[\KMX_z]  \in K_T(\MK_n) \cong \ZZ[a_1^{\pm 1}, \ldots, a_n^{\pm 1}] $
by substituting $a_i \mapsto 1 + \beta x_i$  for $i \in [n]$ and then dividing by $(-\beta)^{\codim(\KMX_z)}$.
\end{definition}

After the substitution $a_i \mapsto 1 + \beta x_i$, the lowest degree of a monomial appearing in $[\KMX_z]$ is $\codim(\KMX_z)$ by \cite[Claim 8.54]{MillerSturmfels}, so $\KG_z$ is in fact completely determined by $[\KMX_z]$. These codimensions can also be computed combinatorially: if $y \in I_n$ and $z \in \ISp_n$ then
$\codim(\iMX_y) = |\DO(y)|$ and $\codim(\fMX_z) = |\DSp(z)|$ \cite[Lemma 5.4]{Pawlowski}.
A method for computing $\codim(\fMX_z)$ for $z \in I_n\setminus \ISp_n$ is 
implicit in the proof of 
\cite[Theorem 6.11]{Pawlowski}, though this is slightly nontrivial.
We refer to $\iG_z$ and $\Gfpf_z$ as \emph{orthogonal} and \emph{symplectic Grothendieck polynomials}.

The polynomials $\KG_z$ can actually be defined without inverting $\beta$:

\begin{theorem}\label{welldef-thm}
For each $\K \in \{\O,\Sp\}$ and $z \in \I_n$, it holds that 
\[ \KG_z \in \ZZ[\beta][x_1,x_2,\dots,x_n]
\qquand
\KG_z + \ILambda_n[\beta]  = [\KX_z] \in \CK(\Fl_n).\]
\end{theorem}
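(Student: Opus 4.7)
The plan is to derive the theorem from Theorem~\ref{thm:same-pol} together with Corollary~\ref{fair-cor}: these together say that the $K$-polynomial of $\KMX_z$, after the change of variables $a_i \mapsto 1+\beta x_i$, reduces modulo $\ILambda_n[\beta]$ to $(-\beta)^{\codim(\KX_z)}[\KX_z]_{\CK}$ in the localization $\CK(\Fl_n)[\beta^{-1}]$. Once one verifies that $\codim(\KMX_z) = \codim(\KX_z)$, dividing by $(-\beta)^{\codim(\KMX_z)}$ yields exactly the asserted congruence, and a nonzerodivisor argument pulls the equality back from the localization to $\CK(\Fl_n)$.

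To fill in the details, I will first represent $[\KMX_z] \in K_T(\MK_n) \cong \ZZ[a_1^{\pm 1},\ldots,a_n^{\pm 1}]$ by a genuine polynomial $p(a_1,\ldots,a_n) \in \ZZ[a_1,\ldots,a_n]$, obtained from any finite $T$-equivariant free resolution of its structure sheaf. To establish that $\KG_z \in \ZZ[\beta][x_1,\ldots,x_n]$, I will invoke \cite[Claim 8.54]{MillerSturmfels}: expanding $p(1+u_1,\ldots,1+u_n)$ in auxiliary variables $u_i$, every nonzero monomial $c_\alpha u^\alpha$ has $|\alpha| \geq \codim(\KMX_z)$. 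Substituting $u_i = \beta x_i$ then shows that $p(1+\beta x_1,\ldots,1+\beta x_n)$ is divisible by $\beta^{\codim(\KMX_z)}$ in $\ZZ[\beta][x_1,\ldots,x_n]$, so $\KG_z$ has no negative powers of $\beta$.

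For the congruence, I will chase the maps. By the discussion preceding Definition~\ref{main-def} and Theorem~\ref{thm:same-pol}, the polynomial $p(1-x_1,\ldots,1-x_n) + \ILambda_n$ represents $[\KX_z]_K \in K(\Fl_n)$. Applying $\psi$ from Proposition~\ref{k-ck-prop}, which Corollary~\ref{fair-cor} identifies with the substitution $x_i \mapsto -\beta x_i$, converts each factor $(1-x_i)$ into $(1+\beta x_i)$ and produces $(-\beta)^{\codim(\KX_z)}[\KX_z]_{\CK}$ in $\CK(\Fl_n)[\beta^{-1}]$. The equality $\codim(\KMX_z) = \codim(\KX_z)$ follows from the flatness of $\sigma^\K_n$ with fibers isomorphic to $\K_n$, together with the identity $(\sigma^\K_n)^{-1}(\KMX_z) = B \cdot \KX_z$ established in the proof of Theorem~\ref{thm:same-pol}. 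Finally, since $\ZZ[x_1,\ldots,x_n]/\ILambda_n$ is a free $\ZZ$-module, $\beta$ is not a zero divisor in $\CK(\Fl_n)$, so the localization map $\CK(\Fl_n) \to \CK(\Fl_n)[\beta^{-1}]$ is injective and the identity descends to $\CK(\Fl_n)$ itself.

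The main obstacle is the bookkeeping in the final step: both the divisibility by $\beta^{\codim(\KMX_z)}$ (needed to define $\KG_z$ without inverting $\beta$) and the matching of the two codimensions (needed to cancel $(-\beta)^{\codim}$ exactly) must be tight, so that the identity in $\CK(\Fl_n)[\beta^{-1}]$ genuinely lifts to $\CK(\Fl_n)$. None of the ingredients are individually deep, but they must slot together precisely.
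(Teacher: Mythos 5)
Your proposal is correct and follows essentially the same route as the paper: use Theorem~\ref{thm:same-pol} and Corollary~\ref{fair-cor} to get the congruence in $\CK(\Fl_n)[\beta^{-1}]$, then invoke \cite[Claim 8.54]{MillerSturmfels} for the lower degree bound that makes $\KG_z$ a genuine element of $\ZZ[\beta][x_1,\dots,x_n]$ and lets the identity descend. You in fact make explicit two points the paper leaves implicit (the equality $\codim(\KMX_z)=\codim(\KX_z)$ via flatness of $\sigma^\K_n$, and the injectivity of $\CK(\Fl_n)\to\CK(\Fl_n)[\beta^{-1}]$), which is a welcome tightening rather than a deviation.
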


\begin{proof}
By the preceding discussion and Theorem~\ref{thm:same-pol},
applying the change of variables $a_i \mapsto 1-x_i$ to 
$[\KMX_z]  \in K_T(\MK_n) \cong \ZZ[a_1^{\pm 1}, \ldots, a_n^{\pm 1}] $
gives a polynomial in $\ZZ[x_1,\ldots,x_n]$ whose image in 
$K(\Fl_n) = \ZZ[x_1,\ldots,x_n]/\ILambda_n$
is $[\KX_z]$.
Since one obtains $(-\beta)^{\codim(\KMX_z)} \KG_z$ from this polynomial by 
substituting $x_i \mapsto -\beta x_i$ by Corollary~\ref{fair-cor},
Proposition~\ref{k-ck-prop} 
implies
that we have
$\KG_z + \ILambda_n[\beta,\beta^{-1}]  = [\KX_z] \in \CK(\Fl_n)[\beta^{-1}]$.

To finish the proof,
it is enough to show that 
after substituting $a_i \mapsto 1-x_i$, the polynomial $[\KMX_z]$ has no
terms of degree less than $\codim(\KMX_z)$ in the $x_i$ variables.
However, as will be explained in more detail in Section~\ref{dom-sect},
this polynomial can be computed in terms of multigraded Hilbert series,
and from this perspective the needed degree property is exactly \cite[Claim 8.54]{MillerSturmfels}.
\end{proof}

\begin{example}\label{sp-ex1}
The symplectic Grothendieck polynomials for $z \in \ISp_4$ are 
\[
\ba
\Gfpf_{2143} & = 1,\\
\Gfpf_{3412} &= x_1 + x_2 + \beta x_1x_2, \\
\Gfpf_{4321} &= x_1^2 + x_1 x_2 + x_1 x_3 + x_2 x_3 +  2\beta x_1 x_2 x_3 + \beta x_1^2 x_2 + \beta x_1^2 x_3 + \beta^2 x_1^2 x_2 x_3.
\ea
\]
The smallest example of $\Gfpf_z$ where $z$ is not $\Sp$-dominant
(see Theorem~\ref{dom-thm})
 is
\[
\ba
\Gfpf_{215634} =\ 
&x_1 + x_2 + x_3 + x_4 + \beta x_1 x_2 + \beta x_1 x_3 + \beta x_1 x_4 + \beta x_2 x_3 + \beta x_2 x_4 \\& + \beta x_3 x_4 + \beta^2 x_1 x_2 x_3 + \beta^2 x_1 x_2 x_4 + \beta^2 x_1 x_3 x_4 + \beta^2 x_2 x_3 x_4 \\&+ \beta^3 x_1 x_2 x_3 x_4.
\ea
\]
We have computed these examples using  Theorem~\ref{sp-thm1}.
\end{example}

  \begin{example}\label{o-ex}
  The orthogonal Grothendieck polynomials for $z \in I_3$ are 
\[
\ba
\iG_{123} & = 1, \\
\iG_{213} & = 2x_1 + \beta x_1^2, \\
\iG_{132} & = 2 x_1+2 x_2 + \beta x_1^2 + 4 \beta x_1 x_2 + \beta x_2^2+2\beta^2  x_1^2 x_2+2\beta^2  x_1 x_2^2 + \beta ^3x_1^2 x_2^2, \\
\iG_{321} & = 2 x_1^2+2 x_1 x_2+ \beta x_1^3 + 3\beta  x_1^2 x_2+ \beta^2 x_1^3 x_2.
\ea
\]
We have computed these examples using Theorem~\ref{cone-thm} and \textsf{Macaulay2}.
  \end{example}

\section{More on Grothendieck polynomials}
\label{sp-sect}

Continue to let $n$ be a fixed positive integer.
Our goal in this section is to outline the notable properties of
the orthogonal and symplectic 
Grothendieck polynomials
$\iG_z$ and $\Gfpf_z$.
The results here will also explain more direct methods 
of computing these polynomials.

\subsection{Stability}\label{stab-sect}

To start, we prove that the polynomials $\KG_z$ for $\K \in \{ \O,\Sp\}$ are  
stable under the natural inclusions $I_n \hookrightarrow I_{n+1}$ and $\ISp_n \hookrightarrow \ISp_{n+2}$ (applied to the indices $z$). 
In the $\K=\Sp$ case, this corresponds to \cite[Theorem 4]{WyserYong}.

Define a map $p : \MK_{n+1} \twoheadrightarrow \MK_{n}$ by $p(A) = A_{[n][n]}$. 
To distinguish between the tori in $\GL_n$ and $\GL_{n+1}$, write $T_n =T$
for the subgroup of  invertible diagonal matrices in $\GL_n$.
Letting the last factor of $T_{n+1}$ act on $\MK_n$ trivially, the map $p$ is then $T_{n+1}$-equivariant,
and the projection 
$T_{n+1} \to T_n$ induces a ring homomorphism 
$K_{T_{n}}(\MK_{n}) \to K_{T_{n+1}}(\MK_{n})$ with $[Z]\mapsto [Z]$.

\begin{lemma} \label{lem:class-stability}  
Choose a symbol $\K \in \{\O, \Sp\}$.
The composition 
\be\label{compo-eq}
K_{T_{n}}(\MK_{n}) \to K_{T_{n+1}}(\MK_{n}) \xrightarrow{p^*} K_{T_{n+1}}(\MK_{n+1})
\ee
maps $[\KMX_{z}]\mapsto [\KMX_{z \times 1}]$ for each $z \in \I_n$. 
\end{lemma}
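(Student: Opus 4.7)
The plan is to apply the flat pullback formula to the projection $p : \MK_{n+1} \twoheadrightarrow \MK_n$, and then identify $p^{-1}(\KMX_z)$ with $\KMX_{z \times 1}$ using the essential-set description of Proposition~\ref{ess-rpo2}. First, I would note that $p$ is a flat morphism, being a linear surjection of affine spaces (equivalently, a trivial vector bundle whose fibers record the new $(n+1)$st row and column). Since the fibers of $p$ are affine spaces, hence reduced, and $\KMX_z$ is reduced (being the closure of a $B$-orbit on $\MK_n$, so irreducible), the flat pullback property recalled in Section~\ref{equi-sect} yields
\[p^*([\KMX_z]) = [p^{-1}(\KMX_z)],\]
with the preimage taken set-theoretically.

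The heart of the argument is then to show $p^{-1}(\KMX_z) = \KMX_{z \times 1}$. A short case analysis proves $\DO(z \times 1) = \DO(z)$: any pair $(i,j) \in [n+1]^2$ witnessing an element of $\DO(z \times 1)$ cannot have $j = n+1$, since then $(z \times 1)(j) = n+1 \leq i < j = n+1$ is impossible, and it cannot have $i = n+1$ with $j \leq n$, since then $i < j$ fails. Hence $\Ess(\DO(z \times 1)) = \Ess(\DO(z)) \subseteq [n] \times [n]$, and because $(z \times 1)_{[i][j]} = z_{[i][j]}$ for $(i,j) \in [n] \times [n]$, Proposition~\ref{ess-rpo2} shows that $\KMX_{z \times 1}$ is cut out by conditions on $A_{[n][n]} = p(A)$ that exactly reproduce the definition of $\KMX_z$. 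Thus $p^{-1}(\KMX_z) = \{A \in \MK_{n+1} : p(A) \in \KMX_z\}$ and $\KMX_{z \times 1}$ agree as subschemes.

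Finally, the inflation $K_{T_n}(\MK_n) \to K_{T_{n+1}}(\MK_n)$ induced by $T_{n+1} \twoheadrightarrow T_n$ sends $[\KMX_z]$ to $[\KMX_z]$ with the $T_{n+1}$-action obtained by pulling back from $T_n$. A direct check shows that $p$ is $T_{n+1}$-equivariant for this inflated action: if $t \in T_{n+1}$ has image $t' \in T_n$, then $p(tAt) = t'(p(A))t'$. Combining this with the identification of the preceding paragraph yields $p^*[\KMX_z] = [\KMX_{z \times 1}]$, as required. The step I expect to require the most care is upgrading the set-theoretic identification to a scheme-theoretic one — essentially ensuring that the determinantal scheme structures match on both sides — and this is handled precisely by the Rothe diagram identity $\DO(z \times 1) = \DO(z)$ fed into Proposition~\ref{ess-rpo2}, together with the reducedness inputs used above.
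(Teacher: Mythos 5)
Your proposal is correct and follows essentially the same route as the paper: show $\Ess(\DO(z\times 1)) = \Ess(\DO(z))$, deduce $p^{-1}(\KMX_z) = \KMX_{z\times 1}$ from Proposition~\ref{ess-rpo2}, and conclude via the flat pullback formula $p^*[\KMX_z] = [p^{-1}(\KMX_z)]$. The paper's proof is a one-liner that omits the details you supply (the case analysis for the Rothe diagram, reducedness, and $T_{n+1}$-equivariance), but the argument is the same.
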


\begin{proof}
Since $\Ess(\DK(z)) = \Ess(\DK(z \times 1))$, 
it follows in view of Proposition~\ref{ess-rpo2} that $p^{-1}(\KMX_{z}) = \KMX_{z \times 1}$, which suffices as $p^*[\KMX_z] = [p^{-1}(\KMX_{z})]$.
\end{proof}

Recall that if $w \in S_n$ then we write $w\times 21$ for the permutation
in $S_{n+2}$ that maps $i \mapsto w(i)$ for $i \in [n]$, $n+1 \mapsto n+2$, and $n+2\mapsto n+1$.

\begin{theorem} \label{thm:poly-stability} 
For each $\K \in \{\O, \Sp\}$ and $z \in \I_n$ it holds that $\KG_{z \times 1} = \KG_z$.
Moreover, if $n$ is even and $z \in \ISp_n$ then $\Gfpf_{z\times 21} = \Gfpf_z$.
 \end{theorem}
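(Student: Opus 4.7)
The plan is to transfer the stability established in Lemma~\ref{lem:class-stability} to the polynomials $\KG_z$ via the recipe in Definition~\ref{main-def}.

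First I would identify the composition \eqref{compo-eq} explicitly at the level of Laurent polynomial rings. Since $\MK_m$ is a linear representation of $T_m$, pullback from the origin gives $K_{T_m}(\MK_m) \cong R(T_m) \cong \ZZ[a_1^{\pm 1}, \ldots, a_m^{\pm 1}]$, and similarly for $K_{T_{n+1}}$ of either $\MK_n$ or $\MK_{n+1}$. Under these identifications, the forgetful map $K_{T_n}(\MK_n) \to K_{T_{n+1}}(\MK_n)$ is the natural inclusion of Laurent polynomial rings, and $p^*$ is the identity on $R(T_{n+1})$, since both identifications come from pullback along the $T_{n+1}$-equivariant constant maps $\MK_m \to \pt$ and $p$ sends the origin to the origin. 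Thus Lemma~\ref{lem:class-stability} shows that $[\KMX_z]$ and $[\KMX_{z \times 1}]$ are represented by the same Laurent polynomial, one independent of $a_{n+1}$.

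Next I would apply Definition~\ref{main-def}: substituting $a_i \mapsto 1+\beta x_i$ yields the same polynomial in $x_1, \ldots, x_{n+1}$ (independent of $x_{n+1}$) for both classes. To conclude $\KG_{z \times 1} = \KG_z$ it suffices to verify $\codim(\KMX_{z \times 1}) = \codim(\KMX_z)$, which follows from $\KMX_{z \times 1} = p^{-1}(\KMX_z)$ (established in the proof of Lemma~\ref{lem:class-stability}) together with the fact that $p$ is flat, being a trivial vector bundle.

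For the second statement, I would rerun the same argument using the projection $p : \MSp_{n+2} \to \MSp_n$, $A \mapsto A_{[n][n]}$, which is $T_{n+2}$-equivariant with the last two factors acting trivially on $\MSp_n$. A direct check from Definition~\ref{rothe-def} gives $\DSp(z) = \DSp(z \times 21)$: the two largest values of $z \times 21$ occupy positions $n+1$ and $n+2$, so no pair $(i,j)$ with either coordinate in $\{n+1, n+2\}$ can satisfy the strict inequality $(z \times 21)(i) > (z \times 21)(j) < i < j$. Since both $z$ and $z \times 21$ lie in $\ISp$, Proposition~\ref{ess-rpo2} then yields $\fMX_{z \times 21} = p^{-1}(\fMX_z)$, giving the analog of Lemma~\ref{lem:class-stability} needed here. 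The main obstacle is really just the first step --- identifying the $K$-theory composition as the inclusion of Laurent polynomial rings --- since once the matrix Schubert classes are identified as polynomials, the substitution and codimension normalization in Definition~\ref{main-def} make everything line up mechanically.
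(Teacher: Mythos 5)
Your proof is correct, and while the first claim is handled essentially as in the paper, your treatment of the second claim takes a genuinely different route. For the first claim, both arguments identify the composition in Lemma~\ref{lem:class-stability} with the inclusion $\ZZ[a_1^{\pm 1},\dots,a_n^{\pm 1}] \hookrightarrow \ZZ[a_1^{\pm 1},\dots,a_{n+1}^{\pm 1}]$ (the paper pins down the second arrow as the unique $R(T_{n+1})$-algebra endomorphism of $R(T_{n+1})$, while you use that $p$ commutes with the projections to a point; both work), and your explicit check that the codimensions agree via flatness of $p$ is a detail the paper leaves implicit (it is also automatic, since the lowest-degree term of the class after the substitution $a_i \mapsto 1+\beta x_i$ determines the codimension). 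For the second claim, the paper stays with the one-step extension and reduces $z\times 21$ to $z\times 1^2$: it compares the rank conditions defining $\fMX_{z\times 21}$ and $\fMX_{z\times 1^2}$, notes they differ only at position $(n+1,n+1)$, and uses the evenness of ranks of skew-symmetric matrices to conclude $\fMX_{z\times 21}=\fMX_{z\times 1^2}$, whence $\Gfpf_{z\times 21}=\Gfpf_{z\times 1^2}=\Gfpf_z$ by two applications of the first claim. You instead verify directly that $\DSp(z\times 21)=\DSp(z)$ (your check is correct) and invoke the second statement of Proposition~\ref{ess-rpo2} to get $\fMX_{z\times 21}=p^{-1}(\fMX_z)$ for the projection $\MSp_{n+2}\to\MSp_n$, then rerun the equivariant argument. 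Your version stays entirely within the fixed-point-free setting and avoids the parity trick; the paper's version avoids redoing the $K$-theory computation by exploiting that $\fMX_y$ and $\Gfpf_y$ are defined for all involutions $y$, not only fixed-point-free ones, so that its first claim already covers $\K=\Sp$ with $y=z\times 1$. Both arguments are complete.
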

 
\begin{proof}
The composition \eqref{compo-eq} can be identified with
\begin{equation} \label{eq:RT-comp}
R(T_n) \to R(T_{n+1}) \xrightarrow{\id} R(T_{n+1}),
\end{equation}
where the first arrow is the linear map that sends each representation $\pi : T_n \to \GL(V)$ to $\pi \circ p : T_{n+1} \to \GL(V)$; the second arrow must be the identity map since this is the unique $R(T_{n+1})$-algebra morphism
 $R(T_{n+1}) \to R(T_{n+1})$.
After identifying $R(T_n)$ with $\ZZ[a_1^{\pm 1}, \ldots, a_n^{\pm 1}]$, \eqref{eq:RT-comp} becomes the inclusion
\begin{equation*}
\ZZ[a_1^{\pm 1}, \ldots, a_n^{\pm 1}] \hookrightarrow \ZZ[a_1^{\pm 1}, \ldots, a_{n+1}^{\pm 1}] \xrightarrow{\id} \ZZ[a_1^{\pm 1}, \ldots, a_{n+1}^{\pm 1}],
\end{equation*}
so the first claim in the theorem follows from Lemma~\ref{lem:class-stability}.

For the second claim, assume $n$ is even and $z \in \ISp_n$. If $u = z \times 21$ and $v=z\times 1^2$,
then we have $\rank(u_{[n+1][n+1]}) +1=\rank(v_{[n+1][n+1]}) =n+1$
while
$\rank(u_{[i][j]}) = \rank(v_{[i][j]})$ for all $(n+1,n+1)\neq (i,j) \in [n+2]\times [n+2]$.
Since $\rank(A_{[n+1][n+1]})$ is necessarily even 
if $A$ is skew-symmetric,
it follows by Definition~\ref{defn:inv-matrix-schubert} that
 $\fMX_{z\times 21}= \fMX_{z\times 1^2}$, so $\Gfpf_{z\times 21} =\Gfpf_{z \times 1^2} = \Gfpf_{z\times 1} =\Gfpf_z$.
\end{proof}

As an application, we can now prove Theorems~\ref{thm:intro-fX-polynomials} and \ref{thm:intro-iX-polynomials}.
We require one lemma.
Recall that $\ILambda_n$ is the ideal in $\ZZ[x_1,x_2,\dots,x_n]$ generated by the elements 
that are symmetric in $x_1,x_2,\dots,x_n$ and have zero constant term. 

\begin{lemma}\label{ilam-lem}
Suppose $n_1,n_2,n_3,\dots$ is a sequence of positive integers with $\lim_{i\to \infty} n_i = \infty$.
Then $\bigcap_{i=1}^\infty \ILambda_{n_i} = 0$.
\end{lemma}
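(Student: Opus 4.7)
The plan is to work inside the polynomial ring $\ZZ[x_1,x_2,\ldots]$ in countably many variables, where each $\ILambda_{n_i}$ makes sense as the ideal generated there by symmetric polynomials in $x_1,\ldots,x_{n_i}$ with zero constant term. (This is the interpretation needed for later uniqueness arguments such as Theorems~\ref{intro-thm1},~\ref{thm:intro-fX-polynomials}, and~\ref{thm:intro-iX-polynomials}, where one compares two candidate representatives after noting that their difference lies in $\ILambda_n[\beta]$ for all sufficiently large~$n$.) Any $f$ in the intersection lies in $\ZZ[x_1,\ldots,x_N]$ for some~$N$ and has some finite total degree~$d$.

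First I would reduce to a finite-variable problem. Since the generators of $\ILambda_{n_i}$ already lie in $\ZZ[x_1,\ldots,x_{n_i}]$, the projection $\pi_{n_i}\colon\ZZ[x_1,x_2,\ldots]\to\ZZ[x_1,\ldots,x_{n_i}]$ sending $x_j\mapsto 0$ for $j>n_i$ carries the ambient ideal into its finite-variable namesake. For $n_i\geq N$ one has $\pi_{n_i}(f)=f$, so the hypothesis becomes that $f$ lies in the finite-variable ideal $\ILambda_{n_i}\subseteq\ZZ[x_1,\ldots,x_{n_i}]$ for arbitrarily large~$n_i$.

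The crucial step is then to invoke the classical fact that $\ZZ[x_1,\ldots,x_n]/\ILambda_n$ admits a $\ZZ$-basis of \emph{Artin monomials} $\{x_1^{a_1}\cdots x_n^{a_n}:0\leq a_j\leq n-j\}$; equivalently, $\ZZ[x_1,\ldots,x_n]$ is a free module over its subring of symmetric polynomials with this basis. For $n_i\geq N+d$, every monomial $x_1^{a_1}\cdots x_N^{a_N}$ appearing in $f$ has $a_j\leq d\leq n_i-N\leq n_i-j$ for $j\leq N$ and $a_j=0$ for $j>N$, so each monomial of $f$ is a distinct Artin basis element in $\ZZ[x_1,\ldots,x_{n_i}]/\ILambda_{n_i}$. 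Consequently $f\equiv 0\pmod{\ILambda_{n_i}}$ forces all coefficients of $f$ to vanish, so $f=0$.

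I do not anticipate any significant obstacle: the argument is just a two-step reduction---passage to a finite polynomial ring, then comparison against the Artin monomial basis---and neither step requires anything beyond classical invariant theory for the symmetric group over~$\ZZ$. The only mild subtlety is making the initial interpretation of the ambient ring explicit, so that the intersection $\bigcap_{i}\ILambda_{n_i}$ is unambiguous.
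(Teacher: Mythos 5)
Your proof is correct, but it takes a different route from the paper's. The paper's argument is: a nonzero $f$ is a nontrivial $\ZZ$-linear combination of Schubert polynomials $\fkS_w$ with $w \in S_N$ for some $N=n_i$, and since $\{\fkS_w + \ILambda_N : w \in S_N\}$ is a $\ZZ$-basis of $\ZZ[x_1,\dots,x_N]/\ILambda_N$ (citing Manivel), $f$ is nonzero in that quotient. You instead compare $f$ against the Artin (staircase) monomial basis $\{x_1^{a_1}\cdots x_n^{a_n} : 0 \leq a_j \leq n-j\}$ of the coinvariant ring, observing that once $n_i \geq N+d$ every monomial of $f$ is itself a basis element, so $f \in \ILambda_{n_i}$ forces $f=0$. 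Both arguments are instances of ``exhibit a basis of the quotient in which $f$ expands nontrivially''; yours is more elementary and self-contained (classical invariant theory over $\ZZ$, at the cost of a little degree bookkeeping), while the paper's is a two-line citation that reuses the Schubert basis facts already invoked elsewhere (e.g.\ in Corollary~\ref{fair-cor}). Your explicit treatment of the ambient ring and the projections $\pi_{n_i}$ is a point of care the paper leaves implicit, and it is handled correctly.
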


The following argument is similar to the proof of \cite[Lemma 2.11]{Pawlowski}.
Write $\{\fkS_w\}_{w \in S_n}$ for the usual family of Schubert polynomials
 (see \cite[Chapter 2]{Manivel}).
 
\begin{proof}
    Suppose $f$ is a nonzero polynomial. Then $f$ is a nontrivial linear combination of elements of $\{ \fkS_w : w \in S_N\}$
for some $N=n_i$ \cite[Proposition 2.5.4]{Manivel}. As $\{\fkS_w + \ILambda_N :w \in S_N\}$ is a $\ZZ$-basis for $\ZZ[x_1,\dots,x_N]/\ILambda_N$
\cite[Proposition 2.5.3 and Corollary 2.5.6]{Manivel}, $f$ is therefore nonzero in this ring.
\end{proof}

\begin{proof}[Proof of Theorems~\ref{thm:intro-fX-polynomials} and \ref{thm:intro-iX-polynomials}]
The existence assertions in these results
are Theorems~\ref{welldef-thm} and \ref{thm:poly-stability}.
The uniqueness of $\iG_z$ and $\Gfpf_z$
follows from Lemma~\ref{ilam-lem},
which
implies that 
$\bigcap_{i=1}^\infty \ILambda_{n+i}[\beta] = \bigcap_{i=1}^\infty \ILambda_{n+2i}[\beta] =0$
for any $n \in \PP$.
\end{proof}

\subsection{Dominant formulas}\label{dom-sect}

Continue to let 
$T=T_n$ be the torus of invertible diagonal matrices in $\GL_n$.
When $V$ is a rational representation of $T$ and $Z\subseteq V$ is a $T$-invariant subscheme, there is a useful algebraic method for computing the polynomial $[Z] \in K_T(V)$,
which we will use to derive an explicit product formula for certain instances of the polynomials $\KG_z$.

 Let $X(T) = \Hom(T, \CC^\times)$ be the character group of $T$. For $\lambda \in X(T)$, let 
\[V_{\lambda} = \{v \in V : \text{$tv = \lambda(t)v$ for $t \in T$}\}\subseteq V\] be the $\lambda$-weight space of $V$. Choosing coordinates on $T$ uniquely identifies integers $m_1, \ldots, m_n$ with $\lambda(t) = t_1^{m_1} \cdots t_n^{m_n}$ for all $t = (t_1, \ldots, t_n) \in T$. Accordingly, we identify $\lambda$ with $(m_1, \ldots, m_n)$, and write $a^{\lambda}$ for the monomial $a_1^{m_1} \cdots a_n^{m_n}$.

\begin{definition} Suppose $V$ is a rational representation of $T$ such that each weight space $V_{\lambda}$ is finite-dimensional. The \emph{Hilbert series} of $V$ is then
\[\Hs(V,a) := \sum_{\lambda \in X(T)} \dim(V_{\lambda}) a^{\lambda}.\] 
When the variables are clear from context, we write $\Hs(V)$ in place of $\Hs(V,a)$.
\end{definition}

\begin{example} \label{ex:hilbert} Let $t \in T_n$ act on $\CC[z_1, \ldots, z_n]$ as the algebra morphism sending $z_i$ to $t_i z_i$. Consider first the case $n = 1$. The nonzero weight spaces in $\CC[z_1]$ are $\CC[z_1]_{(i)}$ for $i \geq 0$, each of which is one-dimensional, so
the Hilbert series is defined and equal to 
\[\Hs(\CC[z_1]) = \sum_{i=0}^{\infty} a_1^i = 1/(1-a_1).\]
If $V$ and $W$ are representations of $T_m$ and $T_n$, then the Hilbert series of $V \otimes_{\CC} W$ as a $T_m \times T_n$-module is $\Hs(V, a_1, \ldots, a_m)\Hs(W, a_{m+1}, \ldots, a_{m+n})$. In particular, 
\[\Hs(\CC[z_1, \ldots, z_n]) = \prod_{i=1}^n 1/(1-a_i).\]
\end{example}

Let $\CC^{\lambda}$ be the one-dimensional representation of $T$ on which $t \in T$ acts as multiplication by $\lambda(t)$. The \emph{weights} of $V$ are the elements of the unique multiset $\{\lambda_1, \ldots, \lambda_d\}$ such that $V \cong \bigoplus_i \CC^{\lambda_i}$ as a $T$-module. Let $I(Z)$ be the ideal of $Z$ in the coordinate ring $\CC[V] := \Sym(V^*)$. The decomposition of $V$ into one-dimensional weight spaces determines (up to scalars) an isomorphism $\CC[V] \cong \CC[z_1, \ldots, z_d]$ with $z_i \in V_{\lambda_i}$. The $T$-action on $V$ defines a $T$-action on $\CC[V]$, and since $Z$ is $T$-invariant, so is the ideal $I(Z)$.

\begin{theorem}[{\cite[\S6.6]{chriss-ginzburg}}]\label{cone-thm} Suppose that $0$ is not a nontrivial nonnegative linear combination of the weights of $V$. Let $Z \subseteq V$ be a $T$-invariant subscheme. Then 
the quotient $\Hs(\CC[V]/I(Z)) / \Hs(\CC[V])$ is a well-defined polynomial, which corresponds to the class $[Z] \in K_T(V)$ under the isomorphism $K_T(V) \cong R(T) \cong \ZZ[a_1, \ldots, a_n]$.
\end{theorem}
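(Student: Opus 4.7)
The plan is to compute both the Hilbert series ratio and the equivariant $K$-class simultaneously from a single $T$-equivariant multigraded free resolution of $\CC[V]/I(Z)$. Since $\CC[V]$ is a polynomial ring on weight vectors $z_1,\ldots,z_d$, the multigraded Hilbert syzygy theorem produces a finite resolution
\[
0 \to F_m \to \cdots \to F_1 \to F_0 \to \CC[V]/I(Z) \to 0
\]
in which each $F_i = \bigoplus_j \CC[V](-\mu_{ij})$ is a finite direct sum of rank one free $\CC[V]$-modules with $T$ acting on the $j$th generator of $F_i$ by the character $\mu_{ij} \in X(T)$. The subsequent steps are essentially bookkeeping on this resolution.

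First I would verify that the weight hypothesis makes the Hilbert series well-defined as rational functions in $a_1,\ldots,a_n$. The assumption that $0$ is not a nontrivial nonnegative combination of $\lambda_1,\ldots,\lambda_d$ places these weights in a common open half-space, so there is a linear functional $\ell$ on $X(T) \otimes \RR$ with $\ell(\lambda_i) > 0$ for every $i$. Pushing the $X(T)$-grading of $\CC[V]$ forward by $\ell$ yields a grading with finite-dimensional pieces in which $\Hs(\CC[V])$ converges, and the product rule from Example~\ref{ex:hilbert} gives the closed form
\[
\Hs(\CC[V]) = \prod_{i=1}^d \frac{1}{1 - a^{\lambda_i}}.
\]
The same argument applies to $\CC[V]/I(Z)$, which is a quotient of $\CC[V]$ by a $T$-invariant ideal.

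Next, additivity of Hilbert series on short exact sequences of multigraded modules, applied iteratively to the resolution, yields
\[
\Hs(\CC[V]/I(Z)) = \left(\sum_{i=0}^m (-1)^i \sum_j a^{\mu_{ij}}\right) \Hs(\CC[V]),
\]
because $\Hs(F_i) = \bigl(\sum_j a^{\mu_{ij}}\bigr)\Hs(\CC[V])$. In particular the quotient $\Hs(\CC[V]/I(Z))/\Hs(\CC[V])$ is the Laurent polynomial $P(a) := \sum_{i,j} (-1)^i a^{\mu_{ij}}$, which proves the well-definedness and polynomiality claim.

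Finally, I would identify $P(a)$ with $[Z]$ in $K_T(V) \cong R(T)$. The isomorphism $K_T(V) \cong R(T)$ is realized by pullback along the $T$-equivariant inclusion $\{0\} \hookrightarrow V$. In $K_T(V)$, the class $[\CC[V]/I(Z)]$ equals $\sum_i (-1)^i [F_i]$ by the resolution, and pullback to the origin sends each $F_i$ to the $T$-representation $\bigoplus_j \CC^{\mu_{ij}}$, whose class in $R(T)$ is $\sum_j a^{\mu_{ij}}$. Summing with signs recovers $P(a)$. The principal obstacle I anticipate is ensuring that the $X(T)$-multigraded version of Hilbert's syzygy theorem delivers a resolution whose differentials are genuinely $T$-equivariant with prescribed weight shifts on each summand, so that the fiber-at-zero and Hilbert series computations agree term by term. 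Once this equivariant refinement is in place, everything else is formal manipulation.
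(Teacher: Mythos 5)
Your argument is correct and is precisely the standard one: the paper does not prove Theorem~\ref{cone-thm} itself but cites \cite[\S6.6]{chriss-ginzburg} (the same mechanism as the ``$K$-polynomial'' of \cite[Ch.~8]{MillerSturmfels}, invoked elsewhere in the paper), where the class is computed exactly as you do --- via a multigraded free resolution whose alternating sum of degree shifts gives simultaneously the numerator of the Hilbert series and the derived restriction of $[\mathcal{O}_Z]$ to the origin, the latter being inverse to the Thom isomorphism $K_T(\pt)\cong K_T(V)$. The equivariance issue you flag is not a real obstacle: for a torus, $X(T)$-graded $\CC[V]$-modules are the same as $T$-equivariant ones, so a graded free resolution is automatically $T$-equivariant, and the positivity hypothesis on the weights guarantees both that the Hilbert series converge and that the shifts $\mu_{ij}$ lie in the monoid generated by the $\lambda_i$, so that $\sum_{i,j}(-1)^i a^{\mu_{ij}}$ is an honest polynomial.
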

The denominator $\Hs(\CC[V])$ is easily computed: as in Example~\ref{ex:hilbert}, it is the product $\prod_{i=1}^d 1/(1-a^{\lambda_i})$ where $\lambda_1, \ldots, \lambda_d$ are the weights of $V$.

\begin{example} \label{ex:K-poly}
    Take $V = \MO_n$ with $T$-action $t: A \mapsto tAt$ as above. If $e_{ij}$ is the matrix with $1$ in entry $(i,j)$ and $0$ in all other entries, then
    \begin{equation*}
    \MO_n = \bigoplus_{1 \leq i \leq j \leq n} \CC\spanning\{e_{ij} + e_{ji}\}
    \end{equation*}
    decomposes $\MO_n$ into one-dimensional weight spaces. Therefore the monomials $a^{\mu}$ as $\mu$ varies over all weights are $a_i a_j$ for $1 \leq i \leq j \leq n$.
    
    Let $Z$ be the variety of matrices $A\in \MO_n$ with $A_{11} = A_{21} = A_{12} = 0$. Let $z_{ij} : \MO_n \to \CC$ be the map $A \mapsto A_{ij}$, so that
    $\CC[\MO_n] = \CC[z_{ij} : 1 \leq i \leq j \leq n].$ Then $I(Z) = (z_{11}, z_{12})$, so
    \begin{equation*}
        \CC[V]/I(Z) \cong \CC[z_{ij} : 1 \leq i \leq j \leq n, (i,j) \neq (1,1), (1,2) ]
    \end{equation*}
    and hence
    \begin{equation*}
        [Z] = \frac{\Hs(\CC[V]/I(Z))}{\Hs(\CC[V])} = \frac{\displaystyle \prod_{\substack{1\leq i \leq j \leq n \\ (i,j) \neq (1,1),(1,2)}} 1/(1 - a_i a_j)}{\displaystyle \prod_{1\leq i \leq j \leq n} 1/(1 - a_i a_j)} = (1-a_1^2)(1-a_1 a_2).
    \end{equation*}
Note that $Z$ is the symmetric matrix Schubert variety $\iX_z$ for $z = 321 \in I_n$. The preceding calculation shows that $\iG_{321} = (2x_1 + \beta x_1^2)(x_1 + x_2 + \beta x_1x_2)$.
\end{example}

For any polynomials $x$ and $y$, let
\be\label{oplus-def}
x\oplus y := x + y + \beta xy \qquand x\ominus y := \tfrac{x-y}{1+\beta y}
\ee
We say that $z \in \I_n$ is \emph{$\O$-dominant}
if it holds that
\[\DO(z) = \{   (i+j-1,j) \in \PP \times [k] : 1 \leq i \leq \mu_j\}\]
for a strict partition $\mu = (\mu_1 > \mu_2 > \dots > \mu_k > 0)$.
Similarly, we define an involution $z$ to be \emph{$\Sp$-dominant} if $z  \in \ISp_n$ and 
\[\DSp(z) = \{   (i+j,j) \in \PP \times [k] : 1 \leq i \leq \mu_j\}\]
for a strict partition $\mu = (\mu_1 > \mu_2 > \dots > \mu_k > 0)$.
One can show that an involution is $\O$-dominant if and only if 
it is \emph{dominant} in the classical sense of being a $132$-avoiding permutation \cite[Proposition 3.25]{HMP1}.

\begin{theorem} \label{dom-thm}
Let $\K \in \{\O,\Sp\}$
and suppose $z \in \I_n$ is $\K$-dominant. Then \[\KG_z = \prod_{(i,j) \in \DK(z)} x_i\oplus x_j.\] 
\end{theorem}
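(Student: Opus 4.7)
The approach is to apply the Hilbert series machinery of Theorem~\ref{cone-thm} to the variety $\KMX_z$, exploiting the fact that $\K$-dominance forces $\KMX_z$ to be a coordinate linear subspace of $\MK_n$. Once this is established, Definition~\ref{main-def} reduces the computation to a straightforward substitution.

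The first step is to show that when $z$ is $\K$-dominant,
\[ \KMX_z \;=\; L_z \;:=\; \{A \in \MK_n : A_{ij} = 0 \text{ for all } (i,j) \in \DK(z)\}. \]
The inclusion $L_z \subseteq \KMX_z$ holds because the staircase shape of $\DK(z)$, together with the vanishing pattern defining $L_z$ (and the symmetry or skew-symmetry relating $A_{ij}$ and $A_{ji}$), forces $\rank(A_{[p][q]}) \leq \rank(z_{[p][q]})$ for every $(p,q) \in [n] \times [n]$ and every $A \in L_z$. The reverse inclusion follows by irreducibility and dimension: $L_z$ is a linear subspace of codimension $|\DK(z)|$, while $\codim(\KMX_z) = |\DK(z)|$ by the formulas cited before Definition~\ref{main-def}.

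The second step is to compute $[\KMX_z] \in K_T(\MK_n)$ via Theorem~\ref{cone-thm}. Choosing coordinates $z_{ij}$ on $\MK_n$ (with $i \geq j$ for $\K = \O$, or $i > j$ for $\K = \Sp$), each $z_{ij}$ has weight $a_ia_j$, and $I(L_z)$ is the coordinate ideal $(z_{ij} : (i,j) \in \DK(z))$. The ratio of Hilbert series telescopes to
\[ [\KMX_z] \;=\; \prod_{(i,j) \in \DK(z)} (1 - a_ia_j). \]
Then Definition~\ref{main-def} applies: substituting $a_i \mapsto 1+\beta x_i$ and using the elementary identity $1 - (1+\beta x_i)(1+\beta x_j) = -\beta(x_i \oplus x_j)$ gives
\[ [\KMX_z]\Big|_{a_i \mapsto 1+\beta x_i} \;=\; (-\beta)^{|\DK(z)|} \prod_{(i,j) \in \DK(z)} (x_i \oplus x_j), \]
and dividing by $(-\beta)^{\codim(\KMX_z)} = (-\beta)^{|\DK(z)|}$ yields the claimed product formula.

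The main obstacle is the first step, and specifically the inclusion $L_z \subseteq \KMX_z$. This is a combinatorial check that vanishing of the entries in the staircase $\DK(z)$ implies all rank bounds of Definition~\ref{defn:inv-matrix-schubert}, not merely the essential ones of Proposition~\ref{ess-rpo2}. For the orthogonal case, the identification of $\O$-dominance with $132$-avoidance (noted just before the theorem statement) reduces this to the classical fact that dominant permutations give rise to coordinate matrix Schubert varieties, with a mild extra argument accounting for symmetry. For the symplectic case, one can run a parallel but independent verification, using that $\DSp(z)$ lies strictly below the diagonal (so skew-symmetry causes no interference with the diagonal entries) and that the strict partition $\mu$ defining the staircase controls exactly how many entries in each row and column must vanish.
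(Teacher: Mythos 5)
Your proposal is correct and follows essentially the same route as the paper: identify $\KMX_z$ with the coordinate subspace cut out by the entries indexed by $\DK(z)$, compute its class as $\prod_{(i,j) \in \DK(z)}(1-a_ia_j)$ via the Hilbert series quotient of Theorem~\ref{cone-thm}, and apply the substitution of Definition~\ref{main-def}. The only (immaterial) difference is in how the equality with the linear subspace is justified: the paper reads it off directly from Proposition~\ref{ess-rpo2}, observing that all essential rank conditions are rank-zero conditions because $(i,z(i)) \notin \DK(z)$ for all $i$, whereas you prove one inclusion combinatorially and obtain the other from irreducibility together with the codimension formula $\codim(\KMX_z) = |\DK(z)|$.
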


\begin{proof} 
Assume that $z \in \ISp_n$ if $\K = \Sp$.
It then follows from Proposition~\ref{ess-rpo2} and the fact that $(i,z(i)) \notin \DK(z)$ for all $i \in [n]$
that
$ \KMX_z$ is just the set of matrices $A \in \MK_n$ with $A_{ij} = 0$ for all $(i,j) \in \DK(z)$. Thus, $I(\KMX_z) = \langle z_{ij} : (i,j) \in \DK(z)\rangle$ so $\codim(\KMX_z) = |\DK(z)|$. Exactly as in Example~\ref{ex:K-poly}, this implies that
$    [\KMX_z] = \prod_{(i,j) \in \DK(z)} (1 - a_i a_j) \in K_T(\MK_n)
$, which becomes 
$ \prod_{(i,j) \in \DK(z)} x_i\oplus x_j$ 
on making the transformations in
Definition~\ref{main-def}.
%
\end{proof}

\begin{remark*}
We believe that when $\K = \O$, Theorem \ref{dom-thm} holds if and only if $z$ is $\O$-dominant, and that when $\K = \Sp$ and $z \in \ISp_n$, Theorem~\ref{dom-thm} holds if and only if $z$ is $\Sp$-dominant. However, Theorem~\ref{dom-thm} can also hold for $\Gfpf_z$ with $z \in \I_n \setminus \ISp_n$. We do not know a nice description of the set of such involutions $z$, but one can show that it includes all the $\O$-dominant involutions.
\end{remark*}

As a special case, we recover two formulas of Wyser and Yong.

\begin{corollary}[Wyser and Yong \cite{WyserYong}]
For any positive integer $n$ it holds that 
\[\iG_{n\cdots 321} = \prod_{1 \leq i \leq j \leq n-i} x_i\oplus x_j
\qquand \Gfpf_{n\cdots 321} = \prod_{1 \leq i < j \leq n-i} x_i\oplus x_j.\]
\end{corollary}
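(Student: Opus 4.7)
The plan is to reduce the statement to Theorem~\ref{dom-thm} applied to the reverse involution $z := n\cdots 321$, which satisfies $z(i) = n+1-i$. First I would compute its ordinary Rothe diagram: since $z(i) > z(j)$ for every $i < j$, one finds $D(z) = \{(i,k) \in \PP\times\PP : i + k \leq n\}$, so restricting to positions weakly and strictly below the main diagonal gives
\[ \DO(z) = \{(i,j) : 1 \leq j \leq i \leq n-j\} \qquand \DSp(z) = \{(i,j) : 1 \leq j < i \leq n-j\}. \]

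Reading off column lengths shows that $z$ is $\O$-dominant with strict partition $\mu = (n-1, n-3, \ldots)$ and, when $n$ is even, that $z$ also lies in $\ISp_n$ (as $n+1-i = i$ has no integer solution) and is $\Sp$-dominant with $\mu = (n-2, n-4, \ldots, 2)$. In these cases Theorem~\ref{dom-thm} yields
\[ \iG_z = \prod_{(i,j) \in \DO(z)} (x_i \oplus x_j) \qquand \Gfpf_z = \prod_{(i,j) \in \DSp(z)} (x_i \oplus x_j), \]
and since $x_i \oplus x_j$ is symmetric in its arguments, swapping coordinates in each index recasts these products as the claimed $\prod_{1 \leq i \leq j \leq n-i} x_i \oplus x_j$ and $\prod_{1 \leq i < j \leq n-i} x_i \oplus x_j$ respectively.

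The main obstacle is the symplectic case when $n$ is odd: then $z$ has a fixed point at $(n+1)/2$, so $z \notin \ISp_n$ and Theorem~\ref{dom-thm} does not apply as stated. I would handle this by appealing to the remark immediately following Theorem~\ref{dom-thm}, which observes that the product formula extends to all $\O$-dominant involutions $z \in \I_n$. Alternatively, one can simply reprise the proof of Theorem~\ref{dom-thm}: because $\DSp(z)$ lies strictly below the diagonal, the essential rank conditions of Proposition~\ref{ess-rpo2} cutting out $\fMX_z$ in $\MSp_n$ still reduce to the vanishing of the coordinates $A_{ij}$ for $(i,j) \in \DSp(z)$ (skew-symmetry forces the diagonal entries to vanish automatically), and the Hilbert series calculation goes through unchanged without the hypothesis $z \in \ISp_n$.
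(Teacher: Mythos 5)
Your proposal is correct and matches the paper's proof, which is simply the one-line observation that the claim ``follows by calculating $\DO(n\cdots 321)$ and $\DSp(n\cdots 321)$'' and invoking Theorem~\ref{dom-thm}. In fact you are more careful than the paper: you correctly flag that for odd $n$ the symplectic case falls outside the literal hypotheses of Theorem~\ref{dom-thm} (since $n\cdots321\notin\ISp_n$), and your fix --- rerunning the coordinate-subspace/Hilbert-series computation using the first part of Proposition~\ref{ess-rpo2} with $\Ess(\DO(z))$, noting that skew-symmetry kills the diagonal conditions --- is exactly the justification the paper defers to its remark following Theorem~\ref{dom-thm}.
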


\begin{proof}
This follows by calculating $\DO(n\cdots 321)$ and $\DSp(n\cdots 321)$.
\end{proof}

\subsection{Symplectic Grothendieck polynomials}\label{sp-case-sect}

Throughout this section, we assume that $n \in 2\PP$ is even.
Here, we investigate some properties of the polynomials
$\Gfpf_z$ that are particular to the symplectic case.

Results of Wyser and Yong \cite{WyserYong}
show that 
the family $\{ \Gfpf_z \}_{z \in \ISp_n}$ 
 can be completely characterized in terms of divided difference operators:

\begin{theorem}[{Wyser and Yong \cite{WyserYong}}]
\label{sp-thm1}
The symplectic Grothendieck polynomials $\{ \Gfpf_z \}_{z \in \ISp_\infty}$
are the unique family in $\ZZ[\beta][x_1,x_2,\dots]$
with
\[\Gfpf_{n\cdots 321} = \prod_{1 \leq i < j \leq n - i } (x_i+ x_j+\beta x_ix_j)
\]
and $\varpi_i \Gfpf_z = \Gfpf_{s_iz s_i}$ for all $i \in [n-1]$ such that $i+1 \neq z(i) > z(i+1) \neq i$. 
\end{theorem}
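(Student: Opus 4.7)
The plan is to establish that the polynomials $\Gfpf_z$ from Definition~\ref{main-def} satisfy the two listed properties, and then to argue uniqueness of any family satisfying them.

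For the initial value, the involution $n \cdots 321 \in \ISp_n$ (with $n$ even) is $\Sp$-dominant: one checks directly that $\DSp(n\cdots 321) = \{(i,j) : 1 \leq i < j \leq n-i\}$, corresponding to the strict partition $(n-2, n-4, \ldots, 2)$. Theorem~\ref{dom-thm} then yields
\[
\Gfpf_{n\cdots 321} = \prod_{(i,j) \in \DSp(n\cdots 321)} (x_i \oplus x_j) = \prod_{1 \leq i < j \leq n-i}(x_i + x_j + \beta x_i x_j).
\]

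For the divided difference recurrence $\varpi_i \Gfpf_z = \Gfpf_{s_i z s_i}$, I would reduce to the original construction of Wyser and Yong \cite{WyserYong}. They build candidate polynomial representatives $\Upsilon_z$ for $[\fX_z] \in \CK(\Fl_n)$ by declaring $\Upsilon_{n\cdots 321}$ equal to the product above and extending via exactly this recurrence, and they show that the extension is well-defined and in fact represents $[\fX_z]$. To identify $\Upsilon_z$ with $\Gfpf_z$ it suffices, by the uniqueness assertion in Theorem~\ref{thm:intro-fX-polynomials}, to verify that the family $\{\Upsilon_z\}$ is stable under $z \mapsto z \times 21$. This stability is the main obstacle: one must compare the inductive construction of $\Upsilon_z$ in $\ISp_n$ with that of $\Upsilon_{z \times 21}$ in $\ISp_{n+2}$, tracking how the extra factors appearing in the dominant formula for the top of $\ISp_{n+2}$ collapse back to the same polynomial under the sequence of $\varpi_i$ operations needed to descend to $z \times 21$. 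Once stability is verified, the uniqueness of Theorem~\ref{thm:intro-fX-polynomials} gives $\Upsilon_z = \Gfpf_z$, and the recurrence transfers.

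Uniqueness of any family $\{F_z\}$ satisfying both conditions is then a descending induction. The element $n\cdots 321$ is the maximum of the fpf-Bruhat order on $\ISp_n$, and any other $z \in \ISp_n$ arises as $s_i z' s_i$ for some $z'$ larger than $z$ in that order with $i+1 \neq z'(i) > z'(i+1) \neq i$; this is a standard property of the fpf-Bruhat order (see e.g.\ \cite{HMP1}). Applying the recurrence gives $F_z = \varpi_i F_{z'}$, so working downward from the common top element forces $F_z = \Gfpf_z$ for every $z \in \ISp_n$, and then the stability clause $\Gfpf_z = \Gfpf_{z \times 21}$ from Theorem~\ref{thm:poly-stability} extends this uniquely to all of $\ISp_\infty$. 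A direct verification of the recurrence purely from the $K$-theoretic definition of $\Gfpf_z$ would require analyzing the pushforward of $[\fMX_{s_i z s_i}]$ under the projection to the partial flag variety $\Fl_n/P_i$ associated to the simple reflection $s_i$ and comparing with $\varpi_i$ acting on $[\fMX_z]$, which is feasible but more delicate than the route through Wyser-Yong outlined above.
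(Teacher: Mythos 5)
Your route is essentially the paper's: both reduce to Wyser and Yong's construction of the representatives $\Upsilon^{\Sp}_z$ and then use the uniqueness characterization of Theorem~\ref{thm:intro-fX-polynomials} to identify them with the $\Gfpf_z$ of Definition~\ref{main-def}. The one step you leave unresolved --- the stability $\Upsilon^{\Sp}_z = \Upsilon^{\Sp}_{z\times 21}$, which you call the ``main obstacle'' --- does not need to be re-derived: it is part of what Wyser and Yong already prove, and the paper's proof simply cites \cite[Theorems 3 and 4]{WyserYong} for exactly the two facts needed (that $\Upsilon^{\Sp}_z$ represents $[\fX_z]$ in ordinary $K$-theory, and that the family is stable under $z \mapsto z\times 21$). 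Beyond that, two minor points. First, Wyser and Yong work in ordinary $K$-theory, so passing to $\CK(\Fl_n)$ requires the twist of Proposition~\ref{k-ck-prop}; this is what the substitution $\Gfpf_z = \beta^{-\ellfpf(z)}\,\Upsilon^{\Sp}_z(\beta x_1,\dots,\beta x_n)$ accomplishes, and your proposal glosses over this homogenization (it is also where one checks that the resulting polynomials land in $\ZZ[\beta][x_1,x_2,\dots]$ rather than $\ZZ[\beta,\beta^{-1}][x_1,x_2,\dots]$). Second, your descending induction on $\ellfpf$ for uniqueness is correct and is a reasonable substitute for the paper's shortcut of importing uniqueness directly from Wyser--Yong's statement; just note that stability under $z\mapsto z\times 21$ is not one of the two listed hypotheses of Theorem~\ref{sp-thm1}, so uniqueness over $\ISp_\infty$ should be phrased as: every element lies in some $\ISp_n$ and is reached from $n\cdots 321$ by the recurrence. (Your computation of the initial value from Theorem~\ref{dom-thm} is fine, up to the harmless transposition that $\DSp(n\cdots 321)$ lies strictly below the diagonal.)
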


The derivation of Theorem~\ref{sp-thm1} from \cite{WyserYong} requires some explanation.
The \emph{fixed-point-free involution length} of $z \in \ISp_n$ is
\be
\label{ellfpf-eq}
 \ellfpf(z) :=
| \{ (i,j) \in [n]\times [n] :z(i) > z(j) < i < j\}|.
\ee
One has $\ellfpf(z\times 21) = \ellfpf(z) = |\DSp(z)| $,
and the only element $z \in \ISp_n$ with
$\ellfpf(z)=0$ is $\wfpf := s_1s_3s_5\cdots s_{n-1}=(1,2)(3,4)\cdots(n-1,n)$.
It holds that
\be\label{ellfpf-eq2}
\ellfpf(s_izs_i)
=
\begin{cases} 
\ellfpf(z) + 1&\text{if }z(i) < z(i+1) \\
\ellfpf(z) & \text{if }i+1 = z(i) > z(i+1)=i \\
\ellfpf(z)-1 &\text{if }i+1 \neq z(i) > z(i+1) \neq i.
\end{cases}
\ee
By induction, one has 
$
\ellfpf(z) = \min\left \{ \ell(w) : w \in S_n\text{ and } w^{-1} \cdot \wfpf \cdot w  =z\right\}.
$

\begin{proof}[Proof of Theorem~\ref{sp-thm1}]
Let $a_i = 1-x_i$ and $D_i = \partial^{(-1)}_i$ for $i \in \PP$. 
Wyser and Yong \cite[Theorem 4]{WyserYong}
prove that there exists a unique family of polynomials 
$\{ \Upsilon^{\Sp}_z \}_{z \in \ISp_n} \subseteq \ZZ[x_1,x_2,\dots,x_n]$
with $\Upsilon^{\Sp}_{n\cdots 321}= \prod_{1\leq i < j \leq n - i} (1-a_i a_j)$ 
and $D_i \Upsilon^{\Sp}_z = \Upsilon^{\Sp}_{s_i zs_i}$ for all $i \in [n-1]$
with $i+1 \neq z(i) > z(i+1) \neq i$. 
(In \cite{WyserYong}, the variable $a_i$ is written as $x_i$.)
It is straightforward to check that the elements
\be\label{gfpf-transform}
\Gfpf_z := \beta^{-\ellfpf(z)} \Upsilon^{\Sp}_z(\beta x_1,\beta x_2,\dots,\beta x_n)\ee
belong to $\ZZ[\beta][x_1,x_2,\dots,x_n]$ and make up the unique family with the properties described in 
 Theorem~\ref{sp-thm1}.

It remains to show that these polynomials
are the same as the ones in Definition~\ref{main-def}.
In view of Theorem~\ref{thm:intro-fX-polynomials} and Proposition~\ref{k-ck-prop},
it suffices to verify that 
$\{\Upsilon^{\Sp}_z\}_{z \in \ISp_n}$ 
represent the classes 
of the structure sheaves of $\{ \fX_z\}_{z \in \ISp_n}$ in ordinary $K$-theory
and that $\Upsilon^\Sp_{z} = \Upsilon^\Sp_{z\times 21}$.
This is \cite[Theorems 3 and 4]{WyserYong}.
\end{proof}

We can describe the action of any $\varpi_i$ on $\Gfpf_z$. 

\begin{proposition}\label{fpf-divided-prop}
Let $z \in \ISp_n$ and $i \in [n-1]$. Then 
\[ \varpi _i \Gfpf_z = \begin{cases} \Gfpf_{s_izs_i} &\text{if } i+1\neq z(i) > z(i+1) \neq i \\ 
-\beta\Gfpf_z&\text{otherwise}.
\end{cases}
\]
\end{proposition}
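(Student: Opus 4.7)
The plan is to split the ``otherwise'' case into two subcases: (i) $z(i) < z(i+1)$, and (ii) $z(i) = i+1$ (equivalently $z(i+1) = i$, using that $z$ is an involution). A preliminary observation, useful throughout, is the operator identity $\varpi_i^2 = -\beta\varpi_i$ on $\ZZ[\beta][x_1,x_2,\dots]$; it follows from $\varpi_i f = -\beta f + (1+\beta x_i)\partial_i f$ together with the twisted Leibniz rule $\partial_i(fg) = (\partial_i f)g + (s_i f)(\partial_i g)$ and $\partial_i^2 = 0$, which give $\partial_i((1+\beta x_i)\partial_i f) = \beta \partial_i f$.

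In subcase (i), I would set $w := s_i z s_i$. Since $z \in \ISp_n$ is fixed-point-free and $z(i) < z(i+1)$, one checks that $z(i), z(i+1) \notin \{i, i+1\}$, so $w \in \ISp_n$ with $w(i) = z(i+1) > z(i) = w(i+1)$, $w(i) \neq i+1$, and $w(i+1) \neq i$. Applying Theorem~\ref{sp-thm1} to $w$ yields $\Gfpf_z = \Gfpf_{s_i w s_i} = \varpi_i \Gfpf_w$, whence $\varpi_i \Gfpf_z = \varpi_i^2 \Gfpf_w = -\beta\,\varpi_i \Gfpf_w = -\beta \Gfpf_z$.

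Subcase (ii) is the main obstacle, since here $s_izs_i = z$ and Theorem~\ref{sp-thm1} provides no direct recursion. The strategy is to prove that $\Gfpf_z$ is symmetric in $x_i$ and $x_{i+1}$, which immediately yields $\partial_i \Gfpf_z = 0$ and hence $\varpi_i \Gfpf_z = -\beta \Gfpf_z$. To establish this symmetry I would pass to the underlying matrix Schubert variety and consider the linear involution $\phi : \MSp_n \to \MSp_n$ given by $\phi(A) = P_i A P_i^\top$, where $P_i$ is the permutation matrix of $s_i$. A direct computation gives $\phi(t\cdot A) = (s_i\cdot t) \cdot \phi(A)$ for $t \in T$, so $\phi$ is equivariant for the automorphism $s_i : T \to T$ interchanging $t_i \leftrightarrow t_{i+1}$; via the Hilbert series description of Theorem~\ref{cone-thm} (and the fact that the weights of $\MSp_n$ are stable under this swap) the induced pullback $\phi^*$ acts on $K_T(\MSp_n) \cong \ZZ[a_1^{\pm 1},\dots,a_n^{\pm 1}]$ by swapping $a_i \leftrightarrow a_{i+1}$. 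Thus if $\phi(\fMX_z) = \fMX_z$, then $[\fMX_z]$ is symmetric in $a_i, a_{i+1}$, and $\Gfpf_z$ is symmetric in $x_i, x_{i+1}$ after the substitution $a_j \mapsto 1 + \beta x_j$ prescribed in Definition~\ref{main-def}.

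The remaining step, which I expect to be the main technical point, is to verify $\phi(\fMX_z) = \fMX_z$. By Proposition~\ref{ess-rpo2}, it suffices to show that every $(p,q) \in \Ess(\DSp(z))$ satisfies $p \neq i$ and $q \neq i$, because for such $(p,q)$ the sets $[p]$ and $[q]$ are $s_i$-stable and $\phi(A)_{[p][q]}$ differs from $A_{[p][q]}$ only by a row and column permutation. Rewriting Definition~\ref{rothe-def} as $\DSp(z) = \{(p,q) : q < p,\ z(p) > q,\ z(q) > p\}$ and using $z(i) = i+1$ and $z(i+1) = i$, two short checks finish the argument: first, $(p, i) \in \DSp(z)$ would force $z(i) > p$, i.e., $i+1 > p$, contradicting $p > i$, so no such position exists; second, if $(i, q) \in \DSp(z)$ then $q < i$ and $z(q) > i$, and since $z(q) = i+1$ would force $q = z(i+1) = i$ (impossible), we obtain $z(q) \geq i+2$, from which $(i+1, q) \in \DSp(z)$ follows, showing $(i, q) \notin \Ess(\DSp(z))$.
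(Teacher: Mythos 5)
Your proof is correct, and while the easy cases (the recursion from Theorem~\ref{sp-thm1} together with $\varpi_i\varpi_i = -\beta\varpi_i$) match the paper exactly, your treatment of the hard case $z(i)=i+1$, $z(i+1)=i$ takes a genuinely different route. The paper reduces to showing $s_i\Upsilon^{\Sp}_z = \Upsilon^{\Sp}_z$, proves only the weaker statement $s_i[\fX_z]=[\fX_z]$ in $K(\Fl_n)$ by pulling back the $S_n$-action on $K(T\backslash\GL_n)$ (citing \cite{PawlowskiRhoades}) and checking that the rank conditions from $\Ess(\DSp(z))$ are stable under swapping rows $i,i+1$, and then upgrades this to a polynomial identity via stability and Lemma~\ref{ilam-lem}. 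You instead work entirely on the matrix side: the conjugation $\phi(A)=P_iAP_i^\top$ of $\MSp_n$ is equivariant for the swap automorphism of $T$, the Hilbert-series description (Theorem~\ref{cone-thm}, with the $s_i$-stable weight multiset of $\MSp_n$) shows that the class of $\phi(Z)$ is the $a_i\leftrightarrow a_{i+1}$ twist of the class of $Z$, and the same essential-set combinatorics (column $i$ of $\DSp(z)$ empty, row $i$ of $\Ess(\DSp(z))$ empty) gives $\phi(\fMX_z)=\fMX_z$, hence $\Gfpf_z$ is literally symmetric in $x_i,x_{i+1}$ and $\varpi_i\Gfpf_z=-\beta\Gfpf_z$. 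What your version buys is self-containedness and a stronger intermediate statement: you prove the symmetry of the polynomial itself rather than of its image modulo $\ILambda_n$, so you avoid both the external input on the $S_n$-action on $K(T\backslash\GL_n)$ and the stability/Lemma~\ref{ilam-lem} step; what the paper's version buys is that the same flag-variety mechanism is already set up there and is reused elsewhere, and it does not require introducing the twisted $T$-equivariance of $\phi$. The underlying combinatorial core — if $(i,j)\in\DSp(z)$ then $(i+1,j)\in\DSp(z)$, using $z(j)\neq i+1$ — is the same in both arguments.
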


\begin{proof}
Let $y = s_izs_i$.
We have $\varpi _i \Gfpf_z =\Gfpf_y$ if $i+1\neq z(i) > z(i+1) \neq i$
by Theorem~\ref{sp-thm1}.
If $z(i) < z(i+1)$ then
$\Gfpf_z = \varpi_i \Gfpf_y$ so $\varpi_i \Gfpf_z =-\beta\Gfpf_z$ since $\varpi_i\varpi_i=-\beta\varpi_i$.

Now suppose $i+1=z(i)> z(i+1) =i$. To show that $\varpi_i \Gfpf_z = -\beta \Gfpf_z$,
it suffices by \eqref{gfpf-transform}
to check that $s_i \Upsilon^{\Sp}_z = \Upsilon^{\Sp}_z$. 
To show this, we resort to a geometric argument.

The action of $S_n$ on $\ZZ[x_1,x_2,\dots,x_n]$
descends to an action on $K(\Fl_n) \cong \ZZ[x_1,x_2,\dots,x_n]/\ILambda_n$.
Since  $\Upsilon^{\Sp}_z = [\fX_z] \in K(\Fl_n)$ \cite[Theorem 3]{WyserYong},
it follows by Lemma~\ref{ilam-lem}
that we can just show that $s_i[\fX_z] = [\fX_z] \in K(\Fl_n)$.

Let $q : T\backslash \GL_n \to B\backslash \GL_n =: \Fl_n$ be the quotient map.  The left action of $S_n$ on $\GL_n$ which permutes rows descends to $T \backslash \GL_n$ and induces an $S_n$-action on $K(T \backslash \GL_n)$. 
As noted in \eqref{ktkt-eq}, the pullback $q^* : K(\Fl_n) \to K(T \backslash \GL_n)$ is an isomorphism; pulling back the $S_n$-action on $K(T \backslash \GL_n)$ gives the action of $S_n$ on $K(\Fl_n)$ described in the previous paragraph
(see \cite[\S6]{PawlowskiRhoades}).

It is enough to show that $q^*[\fX_z] = [q^{-1}(\fX_z)]$ is $s_i$-invariant. We prove this by showing that the variety $q^{-1}(\fX_z)$ itself is $s_i$-invariant.
Recall that $\Sp_n$ is defined as the subgroup of $\GL_n$ preserving the fixed skew-symmetric
nondegenerate bilinear form $\alpha^\K_n : \CC^n \times \CC^n \to \CC$.
        Proposition~\ref{ess-rpo} implies that
        if $g \in \GL_n$ has rows $g_1,g_2,\dots,g_n$, then 
         $Tg \in q^{-1}(\fX_z)$ if and only if
         the matrix $A = [\alpha_n^\Sp(g_p,g_q)]_{p,q \in [n]}$ has $\rank(A_{[i][j]}) \leq \rank(z_{[i][j]})$ for any $(i,j) \in \Ess(\DSp(z))$.
        These rank conditions are invariant under permuting rows $i$ and $i+1$ of $g$ so long as 
        row $i$ of $\Ess(D^{\Sp}(z))$ is empty. The latter holds since if $(i,j) \in D^{\Sp(z)}$ then
                we have $j < z(i) = i+1$ and $j<i < z(j)$, and therefore also $j < z(i+1) = i$ and $j<i+1 < z(j)$, so $(i+1,j) \in D^{\Sp(z)}$. Thus $q^*[\fX_z]$ is $s_i$-invariant,
                and we conclude that $\varpi_i \Gfpf_z = -\beta \Gfpf_z$.    \end{proof}

Any element of $\ZZ[\beta][[x_1,x_2,\dots]]$ whose homogeneous terms are polynomials
(treating $\beta$ as 
a scalar of degree zero) 
can be uniquely expressed as a possibly infinite $\ZZ[\beta]$-linear combination of ordinary Grothendieck polynomials.

Our next main result shows that the symplectic Grothendieck polynomials have a stronger property: 
each $\Gfpf_z$
is actually a finite linear combination of the polynomials $\fkG_w$ with coefficients in $\{1,\beta,\beta^2,\dots\}$. 
In principle, this could also be deduced from general results of Brion \cite[Theorem 4]{Brion2001}.
One advantage to our approach is that it will let us identify the summands appearing in the expansion
of $\Gfpf_z$ in terms of $\fkG_w$ somewhat 
explicitly.

\def\Nil{\cU_n}
\def\cN{\mathcal{N}_n}

Let $\Nil$ denote the free $\ZZ$-module with a basis given by the symbols 
$U_w$ for $w \in S_n$. Set $U_i  :=U_{s_i}$ for $i \in \PP$.
The abelian group $\Nil$
 has a unique ring structure
 with multiplication satisfying
 \[
U_w U_i := \begin{cases} U_{ws_i} &\text{if }w(i) < w(i+1) \\
U_w & \text{if }w(i) > w(i+1)
\end{cases}
\qquad\text{for $w \in S_n$, $i \in [n-1]$.}
\]
This is
the usual Iwahori-Hecke algebra 
of $S_n$
 with $q=0$; 
see \cite[Chapter 7]{Humphreys}.

Let $\cN$ be the free $\ZZ$-module with basis $\{  N_z: z \in \ISp_n\}$.
Results of Rains and Vazirani (namely, \cite[Theorems 4.6 and 7.1]{RainsVazirani} with $q=0$)
imply that $\cN$ has a unique structure as a right $\Nil$-module with
\[
N_z U_i := \begin{cases} N_{s_i z s_i} &\text{if }z(i) < z(i+1) \\
N_z &\text{if }i+1 \neq z(i) > z(i+1) \neq i\\
0 &\text{if }i+1=z(i) > z(i+1)=i 
\end{cases}
\quad\text{for $z \in \ISp_n$, $i \in [n-1]$.}
\]
It is shown in \cite{Marberg2016} that (the undegenerated form of) $\cN$
has a ``quasi-parabolic Kazhdan-Lusztig basis''; it would be interesting to relate this basis 
to the polynomials $\iS^\fpf_z := \Gfpf_z|_{\beta=0}$ and $\Gfpf_z$, in analogy with results in \cite{BW2001,Roichman}.

For $z \in \ISp_n$,
define 
$\HAfpf(z) = \{ w \in S_n : N_\wfpf U_w = N_z\}$
where we again let $\wfpf = s_1s_3s_5\cdots s_{n-1}$.
This set is nonempty and
$\ellfpf(z) \leq \ell(w)$ for all $w \in \HAfpf(z)$. Define 
$\cAfpf(z) = \left\{ w\in \HAfpf(z) : \ellfpf(z) = \ell(w)\right\}.$
We refer to the elements of $\cAfpf(z)$ and $\HAfpf(z)$ as \emph{atoms} and \emph{Hecke atoms} for $z$, respectively.
The set $\cAfpf(z)$ consists of
the $w \in S_n$ of minimal length 
with $z = w^{-1} \cdot \wfpf \cdot w$.

\begin{theorem}\label{atom-thm}
If $z \in \ISp_n$ then 
$\Gfpf_z = \sum_{w \in \HAfpf(z)} \beta^{\ell(w) -\ellfpf(z)} \fkG_w$.
\end{theorem}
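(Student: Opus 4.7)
The plan is to verify that $f_z := \sum_{w \in \HAfpf(z)} \beta^{\ell(w) - \ellfpf(z)} \fkG_w$ satisfies the defining recursion for $\Gfpf_z$ from Theorem~\ref{sp-thm1}. By the uniqueness asserted there, this reduces to checking (a) the base case $f_{n\cdots 321} = \Gfpf_{n\cdots 321}$ and (b) the recursion $\varpi_i f_z = f_{s_izs_i}$ whenever $i+1 \neq z(i) > z(i+1) \neq i$.

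The heart of the argument is (b). For such a pair $(z,i)$, set $y := s_izs_i$, so $\ellfpf(y) = \ellfpf(z) - 1$. I will partition $\HAfpf(z) = A \sqcup B$ by whether $w(i) > w(i+1)$ or $w(i) < w(i+1)$, and further split $A = A' \sqcup A''$ with $A' := \{w \in A : ws_i \in \HAfpf(z)\}$. The key combinatorial lemma, to be deduced from the defining relations of the $0$-Hecke module $\cN$, has three parts: first, $w \mapsto ws_i$ restricts to an involution interchanging $A'$ and $B$, shifting $\ell$ by $\pm 1$; second, $w \mapsto ws_i$ restricts to a length-decreasing bijection $A'' \xrightarrow{\sim} \HAfpf(y)$; third, every element $w' \in \HAfpf(y)$ satisfies $w'(i) < w'(i+1)$. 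Each of these falls out from the observation that for $w \in A$, the $0$-Hecke identity $U_{ws_i} U_i = U_w$ together with $N_\wfpf U_w = N_z \neq 0$ forces $N_\wfpf U_{ws_i}$ to be either $N_z$ or $N_y$; these are the only two elements of $\cN$ satisfying $N_{y'} U_i = N_z$, given that $z$ is in the middle case. The symmetric statement for $w \in B$ is immediate from $U_w U_i = U_{ws_i}$ and $N_z U_i = N_z$, while the third part is forced because a descent of $w'$ at $i$ would give $N_\wfpf U_{w'} U_i = N_y$, but $N_y U_i = N_z \neq N_y$.

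Given the lemma, I will compute $\varpi_i f_z$ using $\varpi_i \fkG_w = \fkG_{ws_i}$ for $w \in A$ and $\varpi_i \fkG_w = -\beta \fkG_w$ for $w \in B$. The $A'$ and $B$ contributions cancel pairwise via the involution (the $\ell$-shift by one supplies a factor of $\beta$ that matches the explicit $-\beta$), while the $A''$ contribution, rewritten via the bijection and $\ellfpf(z) = \ellfpf(y) + 1$, yields exactly $f_y$, establishing (b).

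For the base case (a), the involution $n\cdots 321$ is $\Sp$-dominant, so $\Gfpf_{n\cdots 321}$ is the staircase product $\prod_{(i,j) \in \DSp(n\cdots 321)} (x_i + x_j + \beta x_i x_j)$ from Theorem~\ref{dom-thm}. I will verify that $f_{n\cdots 321}$ equals this product by enumerating $\HAfpf(n\cdots 321)$ combinatorially, exploiting the fact that every middle-case move at $n\cdots 321$ fixes $N_{n\cdots 321}$ in $\cN$ (so Hecke atoms for $n\cdots 321$ are obtained from the atoms by inserting stabilizing generators in all length-compatible positions). The base case is where I expect the main technical difficulty to lie; the recursion step (b) is a clean Hecke-algebraic manipulation once the trichotomy above is in place.
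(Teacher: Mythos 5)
Your step (b) — the trichotomy on $\HAfpf(z)$ coming from the $0$-Hecke module relations, the pairwise cancellation between $A'$ and $B$, and the length-decreasing bijection $A''\to\HAfpf(y)$ — is correct and is essentially identical to the computation in the paper's proof (the paper phrases it as the disjoint union $\{w\in\HAfpf(z):w(i)>w(i+1)\}=\{vs_i:v\in\HAfpf(y)\}\sqcup\{us_i:u\in\HAfpf(z),\,u(i)<u(i+1)\}$). The problem is with how you then conclude. You propose to invoke the uniqueness in Theorem~\ref{sp-thm1}, whose recursion is anchored at the \emph{longest} element, so you must prove $f_{n\cdots321}=\prod_{1\le i<j\le n-i}(x_i\oplus x_j)$. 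You correctly flag this as the hard part, but the sketch you give ("enumerate $\HAfpf(n\cdots321)$... Hecke atoms are obtained from atoms by inserting stabilizing generators") is not a proof and does not obviously lead to one: already for $\beta=0$ the identity $\sum_{w\in\cAfpf(n\cdots321)}\fkS_w=\prod(x_i+x_j)$ is a nontrivial theorem, and the $K$-theoretic version summed over Hecke atoms with $\beta$-weights is at least as hard (for $n=4$ it is the identity $\fkG_{1342}+\fkG_{3124}+\beta\fkG_{3142}=(x_1\oplus x_2)(x_1\oplus x_3)$, and no "insertion of stabilizing generators" description makes the general case routine). As it stands this base case is a genuine gap. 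Note also that knowing $f_{\wfpf}=\Gfpf_{\wfpf}=1$ at the \emph{bottom} does not rescue the argument, because the recursion runs downward in $\ellfpf$ and $\varpi_i$ is not injective.

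The paper closes exactly this gap by a different uniqueness mechanism that never touches the longest element. It takes a counterexample $z$ with $\Delta_z:=\Gfpf_z-\Sigma_z\neq0$ of minimal $\ellfpf(z)$; by minimality and your step (b) (together with Proposition~\ref{fpf-divided-prop}), $\varpi_i\Delta_z=0$ for every $i$ in the "middle case" and $\varpi_i\Delta_z=-\beta\Delta_z$ otherwise, so the lowest-degree homogeneous part of $\Delta_z$ is symmetric in $x_1,\dots,x_n$. Combining this with the stability $\Delta_z=\Delta_{z\times21}$ forces $\Delta_z$ to be symmetric in infinitely many variables, hence (being a polynomial) to have a nonzero constant term — contradicting the fact that both $\Gfpf_z$ and $\Sigma_z$ vanish in degrees below $\ellfpf(z)\ge1$. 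If you want to complete your write-up, either supply a genuine proof of the staircase base case, or replace step (a) by this minimal-counterexample/symmetry/stability argument.
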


This result makes it clear that the family $\{\Gfpf_z\}_{z \in \ISp_n}$ is linearly independent. 

\begin{proof}
Define $\Sigma_z := \sum_{w \in \HAfpf(z)} \beta^{\ell(w) -\ellfpf(z)} \fkG_w$ for $z \in \ISp_n$.
 We claim that
\be\label{h-claim-eq} \varpi _i \Sigma_z = \begin{cases} \Sigma_{s_izs_i} &\text{if } i+1\neq z(i) > z(i+1) \neq i \\ 
-\beta\Sigma_z&\text{otherwise}
\end{cases}
\ee
for all $z \in \ISp_n$ and $i \in [n-1]$.
To show this, fix $z \in \ISp_n$ and $i \in [n-1]$
and let $y=s_izs_i \in \ISp_n$.
There are three cases to consider.

First assume $i+1\neq z(i) > z(i+1) \neq i$.
If $w \in \HAfpf(z)$ and $w(i) > w(i+1)$,
then $ws_i \in \HAfpf(x)$ for some $x \in \ISp_n$
and $N_z = N_\wfpf U_w = N_\wfpf U_{ws} U_i = N_x U_i$,
so  $x \in \{y,z\}$ and  $ws_i \in \HAfpf(y) \sqcup \HAfpf(z)$.
Alternatively, 
if $v \in \HAfpf(y)$ then 
$N_\wfpf U_v U_i = N_y U_i = N_z$, so 
$U_v U_i \neq U_v$ and $v(i) < v(i+1)$ and $vs_i \in \HAfpf(z)$.
We conclude that
$\left\{ w \in \HAfpf(z) : w(i) > w(i+1)\right\}$ 
is the disjoint union
\[\label{con-eq} 
\left\{ vs_i : v \in \HAfpf(y)\right\} \sqcup \left\{ us_i :
 u \in \HAfpf(z),\ u(i) < u(i+1)\right\}.\]
Now, since $\varpi_i \fkG_w =-\beta \fkG_w$ if $w(i) < w(i+1)$, we have
 \[
 \varpi_i \Sigma_z = \sum_{\substack{w \in \HAfpf(z) \\ w(i) >w(i+1)}} \beta^{\ell(w)-\ellfpf(z)} \varpi_i \fkG_w
 -
 \sum_{\substack{w \in \HAfpf(z) \\ w(i) <w(i+1)}} \beta^{\ell(w)-\ellfpf(z)+1}  \fkG_w.
 \]
 Since $\ellfpf(z) = \ellfpf(y)+1$, it follows that the first sum on the right is
\[
\sum_{ v \in \HAfpf(y) } \beta^{\ell(v)-\ellfpf(y)}  \fkG_{v}
+
\sum_{\substack{u \in \HAfpf(z) \\ u(i) <u(i+1)}} \beta^{\ell(u)-\ellfpf(z)+1}  \fkG_u.
 \]
 Substituting this into the previous equation gives $\varpi_i \Sigma_z = \Sigma_{y}$.
 
If $z(i) < z(i+1)$ then $i+1 \neq y(i) > y(i+1) \neq i$, so 
 the previous paragraph
implies that $\Sigma_z = \varpi_i \Sigma_y$ and $\varpi_i \Sigma_z =-\beta\Sigma_z$ as $\varpi_i\varpi_i=-\beta\varpi_i$.
 Finally assume that $i+1 = z(i) > z(i+1) =i+1$. If $w \in \HAfpf(z)$ has $w(i) > w(i+1)$,
 then $ws_i \in \HAfpf(x)$ for some $x \in \ISp_n$ and 
 $N_z =N_\wfpf U_w= N_\wfpf U_{ws} U_i = N_x U_i$,
 which implies the contradiction $0 = N_z U_i = N_x U_i^2 = N_x U_i = N_z$.
 Thus every $w \in \HAfpf(z)$ has $w(i) < w(i+1)$, 
so $\varpi_i \Sigma_z = -\beta\Sigma_z$.
Thus \eqref{h-claim-eq} holds.

We argue by contradiction that $\Gfpf_z = \Sigma_z$ for all $z \in \ISp_n$.
Let $\Delta_z := \Gfpf_z - \Sigma_z$ and suppose $z \in \ISp_n$ is of minimal length $\ellfpf(z)$
such that $\Delta_z \neq 0$.
We cannot have $z = s_1s_3s_5\cdots s_{n-1}$ since then $\Gfpf_z = \Sigma_z =  1$.
The set of indices $I := \{ i \in [n-1] :i+1 \neq z(i) > z(i+1) \neq i\}$
is therefore nonempty.
By Proposition~\ref{fpf-divided-prop}, \eqref{h-claim-eq}, and induction,
we have $\varpi_i \Delta_z = 0$ for all $i \in I$ and $\varpi_i \Delta_z = -\beta \Delta_z$ for all $i \notin I$.
This means that for each $i \in [n-1]$, either $\Delta_z$ or  $(1+\beta x_{i+1})\Delta_z$ 
 is symmetric in $x_i$ and $x_{i+1}$.

The homogeneous term of $\Delta_z$ of lowest degree (with $\deg(x_i) :=1$ and $\deg(\beta):=0$) must therefore be symmetric in $x_1,x_2,\dots,x_n$. 
Since $\Delta_z=\Delta_{z\times 21}$, it follows that $\Delta_z$ 
must actually be symmetric in all the $x_i$-variables for $i \in\PP$.
Since $\Delta_z$
 is a polynomial, this can only occur if $\Delta_z$ has a nonzero constant term.
But it is easy to show by induction that both $\Gfpf_z$ and $\Sigma_z$ have no homogeneous terms of degree less than $\ellfpf(z)\geq 1$, so we reach a contradiction.
Hence no such $z$ can exist, so $\Gfpf_z = \Sigma_z$ for all $z \in \ISp_n$.
\end{proof}

Given $w \in S_n$,
write $\cH(w)$ for the set of finite integer sequences $i_1i_2\cdots i_l$
with $U_w = U_{i_1}U_{i_2} \cdots U_{i_l}$.
Define $\cHfpf(z) = \bigsqcup_{w \in \HAfpf(z)} \cH(w)$ for $z \in \ISp_n$.
We refer to the elements of $\cH(w)$ (respectively, $\cHfpf(z)$) as \emph{(symplectic) Hecke words}.
One has $i_1i_2\cdots i_l \in \cHfpf(z) $ if and only if $N_z = N_{\wfpf} U_{i_1} U_{i_2}\cdots U_{i_l}$.

\begin{corollary}\label{bjs-cor}
Given a subset $S = \{ (a_1,b_1), (a_2,b_2),\dots,(a_l,b_l) \} \subseteq \PP\times \PP$
with $a_1 \leq a_2 \leq \dots \leq a_l$ and $b_k > b_{k+1}$ whenever $a_k = a_{k+1}$,
define 
\[i_k := b_k + (a_k - 1)
\quand \delta(S) := i_1i_2\cdots i_l
\quand x^S :=x_{a_1} x_{a_2}\cdots x_{a_l}.\] If $z \in \ISp_n$ then
$
\ds\Gfpf_z = \sum_{\substack{S \subseteq [n]\times [n]\\ \delta(S) \in \cHfpf(z)}} 
 \beta^{|S| - \ellfpf(z)} x^S$.
\end{corollary}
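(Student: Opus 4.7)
My approach will combine Theorem~\ref{atom-thm} with the known combinatorial expansion of ordinary Grothendieck polynomials due to Fomin and Kirillov. Recall from \cite[Theorem~2.3]{FK1994} that for any $w \in S_n$ one has
\[
\fkG_w \;=\; \sum_{(\mathbf{i},\mathbf{a})} \beta^{l-\ell(w)}\, x_{a_1} x_{a_2}\cdots x_{a_l},
\]
where the sum is over pairs consisting of a Hecke word $\mathbf{i}=i_1 i_2 \cdots i_l \in \cH(w)$ and a \emph{compatible sequence} $\mathbf{a}=(a_1\le a_2 \le \cdots \le a_l)$ of positive integers satisfying $a_k \le i_k$ for all $k$ and the strict inequality $a_k < a_{k+1}$ whenever $i_k \le i_{k+1}$ (equivalently, $a_k = a_{k+1}$ forces $i_k > i_{k+1}$).

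First, I would set up the bijection between subsets $S \subseteq [n]\times[n]$ appearing in the statement and pairs $(\mathbf{i},\mathbf{a})$ as above. Given $S = \{(a_1,b_1),\dots,(a_l,b_l)\}$ ordered with $a_1 \le \cdots \le a_l$ and $b_k > b_{k+1}$ whenever $a_k = a_{k+1}$, set $i_k := b_k + (a_k-1)$, so $\mathbf{i} = \delta(S)$. Since $b_k \ge 1$ we have $a_k \le i_k$; when $a_k = a_{k+1}$ the condition $b_k > b_{k+1}$ yields $i_k > i_{k+1}$, so $\mathbf{a}$ is compatible with $\mathbf{i}$. Conversely, every compatible pair $(\mathbf{i},\mathbf{a})$ with entries in $[n-1]$ arises in this way from a unique $S$ with $b_k := i_k - a_k + 1 \ge 1$, and $(a_k,b_k) \in [n]\times[n]$ because $a_k, b_k \le n-1$. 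Thus
\[
\fkG_w \;=\; \sum_{\substack{S \subseteq [n]\times[n] \\ \delta(S) \in \cH(w)}} \beta^{|S|-\ell(w)}\, x^S.
\]

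Next, I would substitute this into Theorem~\ref{atom-thm}. Since $\cHfpf(z) = \bigsqcup_{w \in \HAfpf(z)} \cH(w)$ by definition, we can swap the order of summation:
\[
\Gfpf_z \;=\; \sum_{w \in \HAfpf(z)} \beta^{\ell(w)-\ellfpf(z)} \fkG_w
\;=\; \sum_{w \in \HAfpf(z)} \sum_{\substack{S \\ \delta(S) \in \cH(w)}} \beta^{\ell(w)-\ellfpf(z)} \cdot \beta^{|S|-\ell(w)}\, x^S.
\]
The exponents on $\beta$ telescope to $|S|-\ellfpf(z)$, and the outer pair of sums consolidates into a single sum over subsets $S \subseteq [n]\times[n]$ with $\delta(S) \in \cHfpf(z)$, producing the claimed identity.

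The only non-routine input is the Fomin--Kirillov formula for $\fkG_w$, which I will invoke as a black box; once that is in hand, the remaining work is bookkeeping about the encoding $S \leftrightarrow (\delta(S),(a_1,\dots,a_l))$ and matching up the powers of $\beta$. I expect no serious obstacle beyond carefully verifying that the compatibility condition in the Fomin--Kirillov formula translates exactly into the constraint $a_k = a_{k+1} \Rightarrow b_k > b_{k+1}$ appearing in the statement.
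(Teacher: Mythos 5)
Your proposal is correct and follows essentially the same route as the paper: invoke the Fomin--Kirillov expansion of $\fkG_w$ over pipe-dream-type subsets $S$ with $\delta(S)\in\cH(w)$, substitute into Theorem~\ref{atom-thm}, and use $\cHfpf(z)=\bigsqcup_{w\in\HAfpf(z)}\cH(w)$ to collapse the double sum, with the $\beta$-exponents telescoping to $|S|-\ellfpf(z)$. The only difference is that you spell out the (routine) bijection between subsets $S$ and compatible pairs $(\mathbf{i},\mathbf{a})$, which the paper absorbs into its citation of \cite[Theorem 2.3]{FK1994}.
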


\begin{proof}
The formula $\fkG_w =  \sum_{S \subseteq [n]\times [n], \delta(S) \in \cH(w)}  
 \beta^{|S| - \ell(w)} x^S$ for $w \in S_n$
is \cite[Theorem 2.3]{FK1994} (cf. \cite[Corollary 5.4]{KnutsonMiller1}),
so this follows from Theorem~\ref{atom-thm}.
 \end{proof}

The summands $S$ appearing in $\fkG_w =  \sum_{S \subseteq [n]\times [n], \delta(S) \in \cH(w)}  
 \beta^{|S| - \ell(w)} x^S$
 are $K$-theoretic versions of what are usually called \emph{RC-graphs} or \emph{pipe dreams}.

\begin{corollary}
If $z \in \ISp_n$ then $\Gfpf_z \in \NN[\beta][x_1,x_2,\dots,x_n]$.
\end{corollary}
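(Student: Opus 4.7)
The plan is to read off the non-negativity directly from the combinatorial expansion in Corollary~\ref{bjs-cor}, which writes
\[
\Gfpf_z \;=\; \sum_{\substack{S \subseteq [n]\times [n]\\ \delta(S) \in \cHfpf(z)}} \beta^{|S| - \ellfpf(z)}\, x^S.
\]
Each $x^S$ is visibly a monomial in the $x_i$'s with coefficient $1$, and distinct $S$'s simply contribute to (possibly the same) monomial with a non-negative multiplicity. Hence the whole issue reduces to showing that the exponent $|S| - \ellfpf(z)$ is non-negative for every $S$ appearing in the sum.

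For this, I would use the chain of inequalities built into the definitions. If $\delta(S) \in \cHfpf(z)$, then by definition $\cHfpf(z) = \bigsqcup_{w \in \HAfpf(z)} \cH(w)$, so there exists $w \in \HAfpf(z)$ with $\delta(S) = i_1 i_2 \cdots i_l \in \cH(w)$, where $l = |S|$. Since $U_w = U_{i_1} U_{i_2} \cdots U_{i_l}$ and each generator $U_{i_k}$ acts on the $0$-Hecke basis either by extending a reduced word by one letter or by stabilizing it, we obtain $\ell(w) \leq l = |S|$. Combined with the inequality $\ellfpf(z) \leq \ell(w)$ for all $w \in \HAfpf(z)$ (recorded immediately after the definition of $\HAfpf(z)$ preceding Theorem~\ref{atom-thm}), this yields
\[
\ellfpf(z) \;\leq\; \ell(w) \;\leq\; |S|,
\]
so $|S| - \ellfpf(z) \geq 0$, as desired.

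There is essentially no obstacle here beyond reading Corollary~\ref{bjs-cor} carefully: the $\NN$-positivity of the classical Grothendieck polynomials $\fkG_w$ (from \cite[Theorem 2.3]{FK1994}) is baked into that formula, and the symplectic case inherits it because the expansion $\Gfpf_z = \sum_{w \in \HAfpf(z)} \beta^{\ell(w)-\ellfpf(z)} \fkG_w$ of Theorem~\ref{atom-thm} has non-negative coefficients $\beta^{\ell(w)-\ellfpf(z)}$ in $\NN[\beta]$. One could equally well write the proof as a two-line deduction from Theorem~\ref{atom-thm} alone, noting that $\ell(w) \geq \ellfpf(z)$ for every $w \in \HAfpf(z)$ and that each $\fkG_w$ lies in $\NN[\beta][x_1, x_2, \dots]$ by \cite[Theorem 2.3]{FK1994}.
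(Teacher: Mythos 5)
Your proof is correct, and the final paragraph is exactly the paper's argument: the paper deduces the corollary in one line from Theorem~\ref{atom-thm}, using that each $\fkG_w$ lies in $\NN[\beta][x_1,\dots,x_n]$ by \cite[Theorem 2.3]{FK1994} and that the coefficients $\beta^{\ell(w)-\ellfpf(z)}$ are in $\NN[\beta]$. The longer route through Corollary~\ref{bjs-cor} is also valid but just unwinds the same facts.
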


\begin{proof}
This holds as $\fkG_w \in \NN[\beta][x_1,\dots,x_n]$ for all $w \in S_n$ \cite[Theorem 2.3]{FK1994}.
\end{proof}

We can describe $\HAfpf(z)$ more concretely.
Fix an involution $z \in \ISp_n$ and
suppose $a_1<a_2<\cdots$ are the integers $a \in [n]$ such that $a < z(a)$,
arranged in increasing order.
Let $b_i = z(a_i)$ for each $i$ and define 
\[ \betaMin(z) = (a_1b_1 a_2b_2 a_3b_3\cdots)^{-1} \in S_n.\]
Write $w_i =w(i)$ for $w \in S_n$ and $i \in [n-1]$.
Let $\approx_\fpf$ be the strongest equivalence relation on $S_\infty$ 
with $v^{-1} \approx_\fpf w^{-1}$ whenever there is 
an even index $i \in 2\NN$ and
integers $a<b<c<d$ such that 
$v_{i+1}v_{i+2}v_{i+3} v_{i+4}$ and $w_{i+1}w_{i+2}w_{i+3}w_{i+4}$
 both belong to 
$\{ adbc, bcad, bdac\}$
and $v_j = w_j$ for all $j \notin \{i+1,i+2,i+3,i+4\}$.

\begin{proposition}[{\cite[Theorem 2.5]{Mar}}]
\label{ha-prop}
If $z \in \ISp_n$ then 
\[\HAfpf(z) = \left\{ w \in S_n : \betaMin(z) \approx_\fpf w\right\}.\]
\end{proposition}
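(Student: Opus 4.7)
The argument splits naturally into three parts: \emph{(a)} verify that $\betaMin(z) \in \HAfpf(z)$; \emph{(b)} show that $\approx_\fpf$ preserves membership in $\HAfpf(z)$, so that the entire $\approx_\fpf$-equivalence class of $\betaMin(z)$ is contained in $\HAfpf(z)$; and \emph{(c)} show the reverse inclusion, namely that every $w \in \HAfpf(z)$ is $\approx_\fpf$-equivalent to $\betaMin(z)$.

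For \emph{(a)}, write $\betaMin(z)^{-1} = a_1 b_1 a_2 b_2 \cdots a_k b_k c_1 c_2 \cdots c_m$ where the $c_j$ list the fixed points of $z$ in increasing order. Construct an explicit sequence of simple reflections realizing $\betaMin(z)$ (for instance, the reduced word obtained by successively moving each $b_i$ left past all entries $> b_i$ into the position $2i$), and apply the Rains--Vazirani formula inductively: each simple reflection $s_j$ with $\betaMin(z)^{-1}(j) < \betaMin(z)^{-1}(j+1)$ acts on $N_\wfpf$ by conjugating the current involution by $s_j$, and after all reflections are applied the resulting involution is precisely $z$. This calculation is a mechanical tracking of the $0$-Hecke action on fixed-point-free involutions.

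For \emph{(b)}, it suffices to verify that if $v^{-1}$ and $w^{-1}$ differ only at positions $i+1, i+2, i+3, i+4$ (with $i$ even), and their restrictions to those positions are two of $adbc$, $bcad$, $bdac$ for the same $a<b<c<d$, then $N_\wfpf U_v = N_\wfpf U_w$. Since the first $i$ letters of $v^{-1}$ and $w^{-1}$ agree and $i$ is even, the states $N_\wfpf U_{v'} = N_\wfpf U_{w'}$ obtained by applying the common initial segment are equal and have the form $N_{z'}$ for an involution $z'$ whose restriction to the interval $\{i+1,i+2,i+3,i+4\}$ is a product of two disjoint transpositions. A local check in the $0$-Hecke module for $S_4$ acting on this window then reduces the proof to comparing three explicit compositions of four $U_j$'s applied to $N_{(i+1,i+2)(i+3,i+4)}$; all three collapse to $N_{(i+1,i+3)(i+2,i+4)}$.

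For \emph{(c)}, the hard step, induct on the pair $(\ell(w), w^{-1})$ ordered lexicographically by length and then reverse lexicographic order on one-line notation. Suppose $w \in \HAfpf(z)$ is not $\betaMin(z)$ itself. Locate the smallest even index $i$ at which $w^{-1}$ disagrees with $\betaMin(z)^{-1}$; a detailed analysis of which four-letter patterns at positions $i+1,\dots,i+4$ are compatible with the constraint $N_\wfpf U_w = N_z$, together with the fact that $\betaMin(z)$ has its $(2j-1,2j)$-entries already paired under $z$, shows that $w^{-1}$ must restrict to one of $adbc$, $bcad$, $bdac$ at this window. A single $\approx_\fpf$-move produces $w'$ with $(\ell(w'), (w')^{-1})$ strictly smaller in the lexicographic order, so induction yields $w \approx_\fpf \betaMin(z)$.

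\textbf{Main obstacle.} Step \emph{(c)} is where essentially all the work lies. The difficulty is not in running the induction once the reduction move has been identified, but in proving that the four-letter windows realized by Hecke atoms are \emph{exactly} the three patterns $adbc$, $bcad$, $bdac$ (and not a larger list). This requires a careful case analysis of the $0$-Hecke action on $\cN$, distinguishing the conjugation, stabilization, and annihilation cases of $N_y U_j$, and using the parity constraint imposed by $\wfpf$ (which pairs rows $2j-1$ with $2j$) to rule out spurious configurations. Once this classification is in hand, the remaining manipulations are formal.
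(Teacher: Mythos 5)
The paper does not actually prove this proposition: it is imported wholesale from \cite[Theorem 2.5]{Mar}, so there is no internal argument to compare yours against. Judged on its own terms, your proposal is a plausible outline of how such a statement is proved (it mirrors the strategy used for the analogous atom/Hecke-atom classifications in \cite{HMP2} and \cite{Mar}), but it is not yet a proof, because the content of each step is deferred rather than supplied.

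Concretely: in part (c), which you yourself identify as containing ``essentially all the work,'' the decisive assertion --- that the only four-letter windows compatible with $N_{\wfpf} U_w = N_z$ are exactly $adbc$, $bcad$, $bdac$, and that a single move strictly decreases your induction order --- is stated as ``a detailed analysis shows'' without the analysis. This is precisely the case analysis that makes the theorem nontrivial, and without it the induction has no base to stand on. Part (b) has a smaller but real gap: the reduction to ``applying the common initial segment'' tacitly assumes that $U_w$ can be factored so that the first $i$ positions of the relevant one-line notation are processed first and yield a state of the form $N_{z'}$ with $z'$ pairing $\{i+1,\dots,i+4\}$ among themselves; this requires choosing and justifying a specific reduced word (or Hecke word) for $w$ adapted to the block structure of $\wfpf$, and the claimed collapse of all three patterns to $N_{(i+1,i+3)(i+2,i+4)}$ is asserted, not verified (and as written it conflates positions in the window of $w$ with positions in the window of $w^{-1}$). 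Part (a) is likewise only described. To turn this into a proof you would need to either carry out these computations explicitly or, more economically, cite \cite[Theorem 2.5]{Mar} as the paper does.
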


There is a complementary result for $\cAfpf(z)$.
Let $\prec_\fpf$ be the transitive closure of the relation on $S_n$ 
with $v^{-1} \prec_\fpf w^{-1}$ whenever there is 
an even index $i \in 2\NN$ and
integers $a<b<c<d$ such that 
$v_{i+1}v_{i+2}v_{i+3} v_{i+4} = adbc$
and
$w_{i+1}w_{i+2}w_{i+3}w_{i+4}= bcad$
and $v_j = w_j$ for all $j \notin \{i+1,i+2,i+3,i+4\}$.

\begin{proposition}[{\cite[Theorem 6.10]{HMP2}}]
If $z \in \ISp_n$ then 
\[\cAfpf(z) = \left\{ w \in S_n :  \betaMin(z) \preceq_\fpf w\right\}.\]
\end{proposition}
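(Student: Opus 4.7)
My plan is to combine Proposition~\ref{ha-prop} with a length analysis of the $\prec_\fpf$ move and a monovariant argument showing that every atom can be reduced to $\betaMin(z)$ via a chain of inverse $\prec_\fpf$-moves.

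First, I would verify directly that $\betaMin(z) \in \cAfpf(z)$: writing out the one-line notation $\betaMin(z)^{-1} = a_1 b_1 a_2 b_2 \cdots$, one checks immediately that $\betaMin(z)^{-1} \wfpf \betaMin(z) = z$ (since $\wfpf$ swaps $2k-1$ and $2k$, sending $a_k$ to $b_k$ and vice versa), and a direct inversion count matches $\ell(\betaMin(z))$ with $|\DSp(z)| = \ellfpf(z)$.

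Second, I would show that $\prec_\fpf$ preserves membership in $\cAfpf(z)$. If $v^{-1} \prec_\fpf w^{-1}$ with $v_{i+1}v_{i+2}v_{i+3}v_{i+4} = adbc$ and $w_{i+1}w_{i+2}w_{i+3}w_{i+4} = bcad$ at an even index $i$, then both patterns contain exactly two inversions among those four positions, while any inversion involving a position outside $\{i+1,\ldots,i+4\}$ depends only on the common multiset $\{a,b,c,d\}$ of values at those four positions; hence $\ell(v) = \ell(w)$. Combined with Proposition~\ref{ha-prop} (using $\prec_\fpf \subseteq \approx_\fpf$), this yields the inclusion $\{w \in S_n : \betaMin(z) \preceq_\fpf w\} \subseteq \cAfpf(z)$.

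The main obstacle is the reverse inclusion. For any atom $w \in \cAfpf(z)$, the identity $w^{-1} \wfpf w = z$ forces the pair $\{w^{-1}(2k-1), w^{-1}(2k)\}$ to be a cycle of $z$ for each $k$; let $m_k < M_k$ denote its two elements. I would introduce the monovariant
\[
\phi(w) := |\{k < k' : m_k > m_{k'}\}|,
\]
and observe that $\phi(w) = 0$ if and only if $w = \betaMin(z)$: in one direction $\phi(\betaMin(z)) = 0$ since $m_k = a_k$; in the other direction, $\phi(w) = 0$ forces $m_k = a_k$ for every $k$, and then a direct check shows that the only length-minimizing within-block arrangement is $(a_k, b_k)$. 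The crucial combinatorial claim is that if $w \in \cAfpf(z)$ and $m_k > m_{k+1}$ for some $k$, then the four values $w^{-1}(2k-1), w^{-1}(2k), w^{-1}(2k+1), w^{-1}(2k+2)$ form the pattern $bcad$, so an inverse $\prec_\fpf$-move applies. This claim follows from a case analysis on the relative order of the cycles $\{m_k, M_k\}$ and $\{m_{k+1}, M_{k+1}\}$: if they are disjoint-and-stacked or cross each other, then swapping the pair-to-block assignment yields another permutation conjugating $\wfpf$ to $z$ with strictly smaller length, contradicting minimality; only the nested configuration $m_{k+1} < m_k < M_k < M_{k+1}$ is compatible with $m_k > m_{k+1}$ in an atom, and within this configuration a direct enumeration of the four possible within-block orderings shows that the unique minimum-length arrangement is $bcad$. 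Each inverse move then strictly decreases $\phi$ by one, so iteration terminates at $\betaMin(z)$, yielding $\betaMin(z) \preceq_\fpf w$.
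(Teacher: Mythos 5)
Your argument is correct, but be aware that the paper itself offers no proof of this proposition at all: it is quoted verbatim from \cite[Theorem 6.10]{HMP2}, so the only thing to compare against is a citation. Your proof is a sound, self-contained derivation, and the key steps check out. The observation that $u^{-1}\wfpf u = z$ if and only if each block $\{u^{-1}(2k-1),u^{-1}(2k)\}$ is a cycle of $z$ reduces everything to choices of (a) an assignment of cycles to blocks and (b) an ordering within each block; minimality forces (b) to be increasing, and your case analysis correctly shows that for an adjacent descent $m_k>m_{k+1}$ the disjoint configuration loses $4$ inversions and the crossing configuration loses $2$ upon swapping the assignment (contradicting minimality), while the nested configuration is length-neutral and produces exactly the window $bcad$ in $w^{-1}$, so the inverse move $bcad\mapsto adbc$ stays inside $\cAfpf(z)$ and drops $\phi$ by exactly one. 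Two minor simplifications: the appeal to Proposition~\ref{ha-prop} (itself only cited, not proved, in this paper) is unnecessary for the forward inclusion, since a single $adbc\mapsto bcad$ move visibly preserves both the length and the block-decomposition into cycles of $z$, hence preserves $\cAfpf(z)$ directly; and the base fact that $\betaMin(z)\in\cAfpf(z)$ (equivalently $\ell(\betaMin(z))=\ellfpf(z)$, which you assert via a ``direct inversion count'' without details) actually falls out of your descending induction for free, since atoms exist and every atom with $\phi>0$ admits a length-preserving move decreasing $\phi$, so the unique $\phi=0$ configuration $\betaMin(z)$ must itself be an atom.
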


\begin{example}
The elements of $\HAfpf(z)$ for $z = 4321 \in \ISp_4$ are 
\[\betaMin(4321) =1342 =(1423)^{-1} \prec_\fpf  3124 = (2314)^{-1} \approx_\fpf 3142 = (2413)^{-1}.\]
We have $\ellfpf(4321) = \ell(1342) = \ell(3124) = \ell(3142)-1 = 2$ and
\[
\ba
\fkG_{1342} &= x_1 x_2 + x_1 x_3 + x_2 x_3 + 2\beta x_1 x_2 x_3, \\
\fkG_{3124} &=  x_1^2, \\
\fkG_{3142} &= x_1^2 x_3 + x_1^2 x_2 + \beta x_1^2 x_2 x_3.
\ea
\]
Comparing with Example~\ref{sp-ex1} shows that $\Gfpf_{4321} = \fkG_{1342} + \fkG_{3124} + \beta \fkG_{3142} $.
\end{example}


\subsection{Degeneracy locus formulas}
\label{subsec:degeneracy-loci}

In contrast to the symplectic case,
the polynomials $\iG_z$ do not have an inductive description
in terms of divided difference operators, and it is an open problem to 
find a general formula for $\iG_z$  that improves on Theorem~\ref{cone-thm}.
We will give a partial solution to this problem in Section~\ref{vex-gro-sect}.
As preparation, 
we review some more general formulas from  \cite{Anderson2017,HIMN} in this section.


 Let $X$ be a smooth complex variety.
Each vector bundle $\cV$ over $X$
has \emph{Chern classes} $c_d(\cV) \in \CK^d(X)$ for $d\in \NN$ and a \emph{Chern polynomial} 
\[c(\cV, t) := \sum_{d \geq 0} c_d(\cV)t^d \in \CK(X)[t]\] with the following properties
(see \cite[Appendix A]{Anderson2017}):
\begin{enumerate}[(a)]
    \item It holds that $c_0(\cV) = 1$ and $c_d(\cV) = 0$ for $d > \rank(\cV)$.
    \item We have $c(\cV,t) = c(\cU, t)c(\cW, t)$ if $0 \to \cU \to \cV \to \cW \to 0$ is a short exact sequence of vector bundles over $X$.
    \item If $\rank (\cV) = 1$, then $c_1(\cV^*) = \frac{-c_1(\cV)}{1 + \beta c_1(\cV)}$.
    \item If $f : Y \to X$ is a morphism, then $c(f^* \cV, t) = f^* c(\cV, t) \in \CK(Y)[t]$.
\end{enumerate}
Since $\CK^d(X) = 0$ for $d>\dim(X)$,
property (a)
implies that $c(\cV, t)$ is invertible in $\CK(X)[t]$,
and any vector bundle $\cV$ over $X=\pt$ must have $c(\cV,t)=1$.
 It follows from property (d),
 with the morphism $Y \to X$ replaced by $X \to \pt$,
  that if $\cV$ is a trivial vector bundle over $X$ then $c(\cV, t) = 1$.
 
Although the difference ``$\cV-\cW$'' for two vector bundles $\cV$ and $\cW$ over $X$ is not defined,
 we set 
 \[c(\cV - \cW, t) := c(\cV, t)/c(\cW, t) \in \CK(X)[[t]].\]
We  regard ``$-$'' defined in this way as a formal inverse of ``$\oplus$,'' 
 which makes sense as property (b) implies that $c((\cV \oplus \cU)- (\cW \oplus \cU), t) = c(\cV - \cW,t)$
 for any vector bundle $\cU$ over $X$.

Let $c_0, c_1, c_2,\ldots$ be indeterminates.
The \emph{raising operator} $T$ associated to such a sequence is the linear operator on the space of arbitrary linear combinations of the $c_i$ variables that sends $c_i\mapsto c_{i+1}$ for each $i$,
and
$\sum_{i \in\NN } a_i c_i \mapsto \sum_{ i \in \NN} a_i c_{i+1}$
for arbitrary coefficients $a_i$.
We adopt the following conventions to make it easier to work with complicated expressions involving these operators:
\begin{itemize}
\item
If $f(x)$ is a function with a Laurent expansion $\sum_{m \in \ZZ} a_m x^m$ at $x=0$,
then we take $f(T)$ to mean $\sum_{m \in \ZZ} a_m T^m$. For instance,
\begin{equation*}
    (1-\beta T)^{-1} c_i := \sum_{m \geq 0} \beta^m T^m c_i = \sum_{m \geq 0} \beta^m c_{m+i}.
\end{equation*}

\item We write $T^{-1}$ for the operator sending 
$\sum_{i \in\NN } a_i c_i \mapsto \sum_{i \in\NN } a_{i+1} c_i$, so that
$T^{-1}(c_i)=c_{i-1}$ for $i > 0$ and $T^{-1}(c_0)=0$.
The composition $T^{-1} T$ is the identity operator while $TT^{-1}$ sends $c_i\mapsto c_i$ for $i>0$ and $c_0\mapsto 0$. 

\item Given a finite collection of sequences of indeterminates $c_0^{(i)}, c_1^{(i)}, c_2^{(i)},\ldots$ for $i \in [n]$, 
we write $T^{(i)}$ for the raising operators that act on monomials by 
\[
\ba
{}&T^{(i)} c_{d_1}^{(1)}\cdots c_{d_i}^{(i)} \cdots c_{d_n}^{(n)} = c_{d_1}^{(1)}\cdots c_{d_i+1}^{(i)} \cdots c_{d_n}^{(n)},
\\
&T^{(i)} c_{d_1}^{(1)}\cdots c_{d_{i-1}}^{(i-1)}c_{d_{i+1}}^{(i+1)} \cdots c_{d_n}^{(n)} = 0.
\ea
\]
In other words, $T^{(i)}$ acts as zero on each monomial that does not involve any 
of $c_0^{(i)}, c_1^{(i)}, c_2^{(i)},\ldots$.
On sums of monomials, $T^{(i)}$ acts linearly in the usual way.
For example, $(T^{(1)})^{-1}(c_2^{(1)}c_1^{(2)} + c_0^{(1)}c_3^{(2)} +  c_1^{(2)}c_2^{(3)}) = c_1^{(1)}c_1^{(2)}$.  This does not define the action of $T^{(i)}$ on a monomial divisible by a product $c^{(j)}_d c^{(j)}_{e\phantom{d}}$, but such monomials will never appear in the cases we consider.

\item To declutter our notation, we sometimes write $1/T^{(i)}$ in place of $(T^{(i)})^{-1}$.
\end{itemize}
 Later we will apply the raising operators $T$ to expressions involving $c_i$ which already have some assigned meaning: in such expressions, we treat the $c_i$ as indeterminates, apply the raising operators, and then replace the symbols $c_i$ with their assigned values.


Continue to let $X$ denote a smooth complex variety.
Fix $n \in \PP$ and 
let $\pi : \cV \to X$ be a vector bundle of even rank $2n$ over $X$. For $x \in X$, write $\cV_x = \pi^{-1}(x)$ for the fiber of $\cV$ over $x$. Assume $\cV$ is equipped with a nondegenerate skew-symmetric bilinear form, meaning that we have fixed a section of the bundle $\Lambda^2 \cV^*$ which is nondegenerate on each fiber of $\cV$. 

A subbundle $\cF \subseteq \cV$ is \emph{isotropic} with respect to this form if $\cF \subseteq \cF^{\perp}$,
where $\cF^\perp$ is the vector bundle whose fiber over $x \in X$ is the orthogonal complement of $\cV_x$
under the associated form.
Assume $\cF^n \subseteq \cdots \subseteq \cF^1 \subseteq \cV$
is an isotropic flag of subbundles, where $\rank(\cF^i) = n-i+1$, 
and let $\cG \subseteq \cV$
be a maximal isotropic subbundle $\cG \subseteq \cV$, necessarily of rank $n$. 

\begin{definition}\label{lg-def}
For a strict partition $\lambda$ with $\lambda_1 \leq n$, with $\cV$, $\cG$, $\cF^\bullet$ as above,
define the
associated \emph{Lagrangian Grassmannian degeneracy locus} 
to be
\begin{equation} \label{eq:degeneracy-locus}
   \Omega^{\LG}_{\lambda}(\cV, \cG, \cF^{\bullet}):=  \{x \in X : \dim( \cG_x \cap \cF^{\lambda_i}_x ) \geq i \text{ for $i \in [\ell(\lambda)]$} \}.
\end{equation}
\end{definition}
Among the components of $\cF^\bullet$, 
only the bundles $\cF^{\lambda_i}$ play a role in the definition of
 $\Omega^{\LG}_{\lambda}(\cV, \cG, \cF^{\bullet})$. Moreover, as we will discuss in 
 Remark~\ref{rem:essential-rank-conditions},
many of the rank conditions $ \dim( \cG_x \cap \cF^{\lambda_i}_x ) = i$ in \eqref{eq:degeneracy-locus} turn out to be superfluous.

Anderson \cite{Anderson2017} and Hudson, Ikeda, Matsumura, and Naruse \cite{HIMN} give explicit formulas for the 
classes $[\Omega_{\lambda}^{\LG}(\cV, \cG, \cF^{\bullet})] \in \CK(X)$ 
in terms of Chern classes. 

\begin{notation}
In the next theorem, we define certain power series $c^{(i)} \in \CK(X)[[t]]$ 
in the variable $t$. Let $c_d^{(i)} \in \CK(X)$ 
be such that $c^{(i)} = \sum_{d\geq 0} c_d^{(i)} t^d$,
and write $T^{(i)}$ 
for the raising operator acting on $c_d^{(i)}$. 
For $i<j$, we also define 
\be\label{Rij-eq}
R^{(i,j)} := \frac{1 - T^{(i)}/T^{(j)}}{1 + T^{(i)}/ T^{(j)} - \beta T^{(i)}}
.\ee
This operator should be expanded in $T^{(i)}$ as
\[
R^{(i,j)}=
 \sum_{k\geq 0} \(\beta T^{(i)}\)^k 
+ \sum_{\substack{k\geq 0\\ l>0}} (-1)^l \( \tbinom{k+l-1}{k} + \tbinom{k+l}{k}\) \(\beta T^{(i)}\)^k \(T^{(i)}/T^{(j)}\)^l.
\]
Finally, denote the \emph{Pfaffian} of a skew-symmetric matrix $A=(A_{ij})_{i,j\in[r]}$ by
\[\pf(A) :=  \sum_{z \in \ISp_n} (-1)^{\ellfpf(z)} \prod_{z(i) < i \in [r]} A_{z(i),i}.\]
One has $\det(A) = \pf(A)^2$.
 If $n$ is odd then $\pf(A) =0$ since $\ISp_n$ is empty.
\end{notation}

\begin{theorem}[{\cite[Theorem 2]{Anderson2017preprint}}]  \label{thm:anderson-pfaffians} 
Suppose $\lambda$ is a strict partition and $\cV$, $\cG$, and $\cF^\bullet$ are given such that $\Omega_{\lambda}^{\LG}(\cV, \cG, \cF^{\bullet})$ is a Lagrangian Grassmannian degeneracy locus in a smooth complex variety $X$ 
with $\codim(\Omega_{\lambda}^{\LG}(\cV, \cG, \cF^{\bullet})) = |\lambda|$.
Let $r$ be the smallest even integer with $\ell(\lambda) \leq r$.
Let $S$ be a subset of $[r]$ containing
\[
\OC(\lambda) := \{ i \in [\ell(\lambda)-1] : \lambda_i > \lambda_{i+1} + 1\} \sqcup \{ \ell(\lambda)\}.
\]
For $i \in [\ell(\lambda)]$, define
$
        c^{(i)} = c(\cV - \cG - \cF^{\lambda_s}, t)
$
 where $s \in S$ is minimal with $i \leq s$,
and let $c^{(r)} = 1$ and $\lambda_r = 0$ if $r = \ell(\lambda)+1$. Then 
    $
    [\Omega_{\lambda}^{\LG}(\cV, \cG, \cF^{\bullet})]
    \in \CK(X)
    $
     is the Pfaffian of the $r \times r$ skew-symmetric matrix whose $(i,j)$ entry for $i < j$ is
\be \label{eq:pfaffian-entries}
       R^{(i,j)} \(1-\beta T^{(i)}\)^{r-i-\lambda_i}\(1-\beta T^{(j)}\)^{r-j-\lambda_j} c_{\lambda_i}^{(i)} c_{\lambda_j}^{(j)}.
\ee
\end{theorem}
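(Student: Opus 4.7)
My plan is to reduce the statement to a universal case on a flag bundle, compute the class by pushing forward from a Bott-Samelson style resolution, and then recognize the iterated pushforward as the stated Pfaffian by matching raising-operator identities.

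First I would note that both sides of the claimed identity in $\CK(X)$ are compatible with pullback along morphisms of smooth complex varieties. Using the classifying maps for $\cV$, $\cG$, and the individual subbundles $\cF^{\lambda_i}$ indexed by $i \in S$ (the only ones appearing in the right-hand side), I would reduce to the case where $X$ is a smooth open subset of a tower of isotropic Grassmannian bundles over a point, with $\cV, \cG, \cF^\bullet$ the tautological bundles. On this universal $X$ the degeneracy locus is irreducible of the expected codimension $|\lambda|$, so one can compute its class without worrying about transversality.

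Next I would build a resolution $\pi : \widetilde{\Omega} \to \Omega^{\LG}_{\lambda}(\cV,\cG,\cF^{\bullet})$ as an iterated Grassmannian bundle parametrizing a sequence of isotropic subspaces $V_1 \subseteq V_2 \subseteq \cdots \subseteq V_{\ell(\lambda)}$ with $\rank V_i = i$ and $V_i \subseteq \cG \cap \cF^{\lambda_i}$. Since $\pi$ is birational onto its image, computing $[\Omega_\lambda^{\LG}]$ reduces to computing $\pi_{\ast}[\mathcal{O}_{\widetilde\Omega}]$. For a single-row $\lambda = (k)$, this is a connective $K$-theoretic Kempf-Laksov-style pushforward, which one can verify produces the factor $(1-\beta T)^{r-1-k}\,c_k(\cV-\cG-\cF^k)$ via the standard identity $\pi_{\ast}\,c_1(\mathcal{O}(1))^m = c_{m - \rank \cE + 1}(\cE)$ on a projective bundle, plus $\beta$-corrections. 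Iterating over the tower gives an expression of the same shape in the $c^{(i)}$-variables for each row.

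The heart of the proof is recognizing the result as the Pfaffian in \eqref{eq:pfaffian-entries}. The skew-symmetric form on $\cV$ induces a skew-symmetric pairing on the pairs $(V_i,V_j)$, and the excess intersection correction that must be applied at each step to account for non-transverse intersections is precisely $R^{(i,j)}$ as defined in \eqref{Rij-eq}: the operator's expansion in $T^{(i)}$ encodes the $K$-theoretic Euler class of the normal bundle of the diagonal in a Lagrangian Grassmannian bundle. The Pfaffian appears because the corrections naturally organize by pairs and inherit the sign structure from $\ellfpf(z)$. The freedom in choosing $S \supseteq \OC(\lambda)$ reflects the fact that when $\lambda_i = \lambda_{i+1}+1$, the bundles $\cF^{\lambda_i}$ and $\cF^{\lambda_{i+1}}$ differ by a single line, and the operator $R^{(i,i+1)}\,(1-\beta T^{(i)})\,c^{(i)}_{\lambda_i} \cdot c^{(i+1)}_{\lambda_{i+1}}$ is invariant under replacing $c^{(i)} = c(\cV-\cG-\cF^{\lambda_i},t)$ by $c(\cV-\cG-\cF^{\lambda_{i+1}},t)$, which is a direct computation using properties (b) and (c) of Chern classes from Section~\ref{subsec:degeneracy-loci}.

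The hardest step will be this last invariance calculation, since it requires careful bookkeeping of the $\beta$-corrections across the raising operators, and it is what ultimately justifies the flexibility in $S$. A useful sanity check along the way is to set $\beta=0$, which collapses $R^{(i,j)}$ to the classical Young-raising operator $(1-T^{(i)}/T^{(j)})/(1+T^{(i)}/T^{(j)})$ and recovers Kazarian's Pfaffian formula for Lagrangian Schubert classes in ordinary cohomology.
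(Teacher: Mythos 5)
Your proposal sets out to reprove Anderson's degeneracy locus formula from scratch, whereas the paper treats Theorem~\ref{thm:anderson-pfaffians} as essentially a citation: for $\ell(\lambda)$ even it \emph{is} \cite[Theorem 2]{Anderson2017preprint} after matching notation ($p_i=1$, $q_i=\lambda_{k_i}$, $E_{p_i}=\cG$, $F_{q_i}=\cF^{q_i}$), and the only new content is the case $\ell(\lambda)$ odd, where the stated $r\times r$ matrix differs from Anderson's. The paper handles that by introducing the raising-operator products $P^{(l)}$ and checking $P^{(l+1)}=P^{(l)}$ for $l\geq\ell(\lambda)$, using $(1/T^{(l+1)})c^{(l+1)}_0=0$ and the factorization $\prod_i R^{(i,l+1)}=\prod_i(1-\beta T^{(i)})^{-1}$ on such terms. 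Your outline never addresses this even/odd reconciliation at all — you fix $r$ even at the outset but then work row by row up to $\ell(\lambda)$, so when $\ell(\lambda)$ is odd you still owe an argument that appending the trivial row/column with $c^{(r)}=1$, $\lambda_r=0$ does not change the Pfaffian.

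More seriously, the two steps that constitute the actual mathematical content of Anderson's theorem are asserted rather than proved. First, the connective $K$-theoretic pushforward through the resolution tower: the claim that a single row contributes $(1-\beta T)^{r-1-k}c_k(\cV-\cG-\cF^k)$ ``plus $\beta$-corrections'' is precisely the computation that must be carried out, and in $\CK$ the relevant Gysin formula for isotropic Grassmannian bundles is substantially harder than the projective-bundle identity you quote (this occupies most of \cite{Anderson2017preprint} and \cite{HIMN}). Second, the passage from the raising-operator product $\prod_{i<j}R^{(i,j)}\prod_i(1-\beta T^{(i)})^{r-i-\lambda_i}c^{(i)}_{\lambda_i}$ to the Pfaffian of the matrix \eqref{eq:pfaffian-entries} is a nontrivial $K$-theoretic Schur--Pfaffian identity; saying that ``the corrections naturally organize by pairs and inherit the sign structure from $\ellfpf(z)$'' names the desired conclusion but is not an argument. (Your $\beta=0$ sanity check and the observation about the flexibility in $S$ are both correct, but the latter is again part of what is established in \cite{Anderson2017preprint}, not something that can be disposed of by the one-line invariance you sketch without fixing the exponent bookkeeping.) As written, the proposal is a roadmap to Anderson's paper rather than a proof; to be complete it must either supply these two computations in full or, as the paper does, cite them and then prove the $P^{(\ell(\lambda)+1)}=P^{(\ell(\lambda))}$ identity that the statement here actually requires.
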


The exponents $r-i-\lambda_i$ and $r-j-\lambda_j$ in \eqref{eq:pfaffian-entries}
may be negative.
Since our statement is slightly different from the one in \cite{Anderson2017preprint},
we sketch a proof below.

\begin{remark}
In the preceding theorem and the proof which follows,
we are citing the {\tt arXiv} version \cite{Anderson2017preprint}
of Anderson's paper rather than the published version \cite{Anderson2017}.
At the time of writing, there is an error in the statement of \cite[Theorem 2]{Anderson2017} which has been corrected
in \cite[Theorem 2]{Anderson2017preprint}.
\end{remark}

\begin{proof}
    When $\ell(\lambda)$ is even, this is the special case of \cite[Theorem 2]{Anderson2017preprint}
    with $s=|S|$ and
    $S = \{ k_1 < k_2 < \dots < k_s\}$, with
     $p_i = 1$ and $q_i = \lambda_{k_i}$ for $i \in [s]$, 
     and with
     $E_{p_i}=\cG$ and $F_{q_i}  = \cF^{q_i}$ for $i \in [s]$.
     When $\ell(\lambda)$ is odd, our matrix is different from the matrix in \cite[Theorem 2]{Anderson2017preprint}, but we
     claim that it has the same Pfaffian. To see this, for any $l\in \PP$ define
$    P^{(l)} := \prod_{1 \leq i < j \leq l} R^{(i,j)} \prod_{i=1}^l \(1-\beta T^{(i)}\)^{l-i-\lambda_i} c^{(i)}_{\lambda_i}
$ where we set $c^{(i)} = 1$ and $\lambda_i = 0$ if $i > \ell(\lambda)$. If $l \geq \ell(\lambda)$, then 
 \[ \(1/T^{(l+1)}\)c^{(l+1)}_{\lambda_{l+1}} = \(1/T^{(l+1)}\)c^{(l+1)}_{0}=0\] and therefore 
\[\ba
        P^{(l+1)} &= \prod_{1 \leq i < j \leq l} R^{(i,j)}  \prod_{i=1}^{l}  R^{(i,l+1)} \prod_{i=1}^{l+1} \(1-\beta T^{(i)}\)^{l+1-i-\lambda_i} c^{(i)}_{\lambda_i}\\
        &= \prod_{1 \leq i < j \leq l} R^{(i,j)} \prod_{i=1}^{l} \(1-\beta T^{(i)}\)^{-1} \prod_{i=1}^{l} \(1-\beta T^{(i)}\)^{l+1-i-\lambda_i} c^{(i)}_{\lambda_i}
        = P^{(l)}.
\ea\]
    It is shown in the proof of \cite[Theorem 2]{Anderson2017preprint} that $P^{(\ell(\lambda))} = [\Omega^{\LG}_\lambda(\mathcal{V}, \mathcal{F}, \mathcal{G}^{\bullet})]$, and that if $l$ is even then $P^{(l)}$ is the Pfaffian of the $l \times l$ skew-symmetric matrix with entries \eqref{eq:pfaffian-entries}. In particular, if $\ell(\lambda)$ is odd then $P^{(\ell(\lambda)+1)}$ is the Pfaffian in the statement of the theorem, and $P^{(\ell(\lambda)+1)} = P^{(\ell(\lambda))} = [\Omega^{\LG}_\lambda(\mathcal{V}, \mathcal{F}, \mathcal{G}^{\bullet})]$.
    \end{proof}

\begin{remark} \label{rem:essential-rank-conditions}
The statement of Theorem~\ref{thm:anderson-pfaffians} 
would be simpler if we fixed $S = [r]$. It is useful to allow some
flexibility in our choice of $S$, however, since not all of the rank conditions in \eqref{eq:degeneracy-locus} are necessary: restricting $i$ to be an element of $S \subseteq [r]$ defines the same locus $\Omega_{\lambda}^{\LG}(\cV, \cG, \cF^{\bullet})$ whenever $\OC(\lambda) \subseteq S$.
Theorem~\ref{thm:anderson-pfaffians} 
gives a formula for 
$[\Omega_{\lambda}^{\LG}(\cV, \cG, \cF^{\bullet})]$ that does not involve
the bundles $\cF^{\lambda_i}$ for $i \notin S$  that are irrelevant to the definition of $\Omega_{\lambda}^{\LG}(\cV, \cG, \cF^{\bullet})$.
\end{remark}

 \subsection{Orthogonal Grothendieck polynomials}\label{vex-gro-sect}

Here, we use
Theorem~\ref{thm:anderson-pfaffians}
to derive a formula for
 the polynomials
 $\iG_z$ indexed by involutions $z \in I_n$ that are \emph{vexillary}
 in the sense of being $2143$-avoiding.
These permutations have a useful alternate characterization;
 recall the definitions of $\Ess(D)$ and $\DO(z)$ from 
 Section~\ref{matrix-sect}.

 \begin{lemma}[{\cite[Lemma 4.18]{Pawlowski}}]\label{vex-lem}
An involution $z \in I_n$ is vexillary if and only if the essential set $\Ess(\DO(z))$) is a chain
  under 
   the partial order $\SWNEleq$ on $\ZZ\times \ZZ$ with $(a,b) \SWNEleq (i,j)$
 if and only if $i \leq a$ and $b \leq j$.
  \end{lemma}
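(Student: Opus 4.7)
The plan is to reduce the statement to Fulton's classical characterization: a permutation $w$ is vexillary (i.e., avoids $2143$) if and only if its Rothe diagram essential set $\Ess(D(w))$ is a chain under $\SWNEleq$. The task is then to compare $\Ess(\DO(z))$ with $\Ess(D(z))$ when $z$ is an involution.

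The key lemma I would prove first is the identity
\[
\Ess(\DO(z)) = \Ess(D(z)) \cap \{(i,j) \in \ZZ \times \ZZ : i \geq j\}
\]
for every $z \in \I_n$. Since $\DO(z) = D(z) \cap \{i \geq j\}$ by Definition~\ref{rothe-def}, the main point is tracking what happens along the boundary. For a strictly-below-diagonal cell $(i,j)$ with $i>j$, the neighbors $(i+1,j)$ and $(i,j+1)$ both lie weakly below the diagonal, so membership in $\DO(z)$ or $D(z)$ coincides for them and the essentiality conditions match. For a diagonal cell $(i,i)$, the cell $(i,i+1)$ is never in $\DO(z)$, so its exclusion is automatic; meanwhile the symmetry $(i,j) \in D(z) \iff (j,i) \in D(z)$, which holds because the defining condition $i < z(j)$ and $j < z(i)$ is invariant under swapping when $z = z^{-1}$, forces $(i,i+1) \notin D(z) \iff (i+1,i) \notin D(z)$, and the two essentiality conditions agree again.

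Granted the identity, the forward direction is immediate: if $z$ is vexillary, then $\Ess(D(z))$ is a chain by Fulton's theorem, and so is its subset $\Ess(\DO(z))$. For the converse, suppose $\Ess(\DO(z))$ is a chain. By the symmetry of $D(z)$, the set $\Ess(D(z))$ is symmetric about the diagonal, so it can be written as $\Ess(\DO(z)) \cup T$, where $T = \{(b,a) : (a,b) \in \Ess(\DO(z)),\ a > b\}$ is the reflection of the strictly-below-diagonal part. Comparability of any two elements of $\Ess(D(z))$ then splits into three cases: both in $\Ess(\DO(z))$ (given); both in $T$ (if $(a_1,b_1) \SWNEleq (a_2,b_2)$ in $\Ess(\DO(z))$, then directly $(b_2,a_2) \SWNEleq (b_1,a_1)$); and one from each. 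In the mixed case, given $(x,y) \in \Ess(\DO(z))$ and $(b,a) \in T$ with $(a,b) \in \Ess(\DO(z))$ and $a > b$, the chain relation between $(x,y)$ and $(a,b)$ in $\Ess(\DO(z))$, combined with the inequalities $x \geq y$ and $a > b$, forces $(x,y) \SWNEleq (b,a)$ in either subcase. Concluding the chain property for $\Ess(D(z))$, Fulton's theorem gives vexillarity of $z$.

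The main obstacle is the bookkeeping in the mixed case of the converse: one has to track the orientation of $\SWNEleq$ carefully and combine the below-diagonal constraint with both directions of the chain hypothesis. No deeper idea is required beyond this case analysis, but without the preliminary identification of $\Ess(\DO(z))$ as the lower part of $\Ess(D(z))$, one would not have the Fulton characterization available as a black box.
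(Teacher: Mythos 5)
Your argument is correct. The paper does not prove this lemma --- it is quoted verbatim from \cite[Lemma 4.18]{Pawlowski} --- but your route is the natural one and, as far as I can tell, the same one used there: establish $\Ess(\DO(z)) = \Ess(D(z)) \cap \{(i,j) : i \geq j\}$ using the symmetry of $D(z)$ about the diagonal for involutions, then invoke Fulton's characterization of vexillary permutations. Your verification of the identity is sound in both the strictly-below-diagonal and diagonal cases, and the case analysis in the converse (in particular the mixed case, where $(x,y) \SWNEleq (b,a)$ follows in both subcases from $x \geq y$ and $a > b$) checks out.
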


 Write ${\CC^n}^*$ for the dual space of $\CC$-linear maps $\CC^n \to\CC$.
 We represent elements of the direct sum $\CC^n \oplus {\CC^n}^*$ as pairs $(v,\omega)$ where $v \in \CC^n$ and $\omega \in {\CC^n}^*$.
 Define $\langle \cdot,\cdot\rangle^-$ to be the skew-symmetric bilinear form
on ${\CC^n} \oplus {\CC^n}^*$ with 
\be \langle (v_1, \omega_1), (v_2, \omega_2)\rangle^- := \omega_1(v_2) - \omega_2(v_1).\ee
Let $\LG_{2n}$ be the Lagrangian Grassmannian with respect to this form, so that
\[
    \LG_{2n} = \{U \in \Gr(n, \CC^n \oplus {\CC^n}^*) : \langle \cdot,\cdot \rangle^-|_{U \times U} \equiv 0\}.
\]
The \emph{graph} of a bilinear form $\alpha : \CC^n \times \CC^n \to \CC$ is
\be
    \Gamma(\alpha) := \{(v, \alpha(v,\cdot )) : v \in \CC^n\} \in \Gr(n, \CC^n \oplus {\CC^n}^*).
\ee
Such a form $\alpha$ is symmetric if and only if $\graph(\alpha) \in \LG_{2n}$.
Recall from Section~\ref{matrix-sect} that $\alpha_n^\O$ is a fixed 
symmetric nondegenerate bilinear form on $\CC^n$, and that we define
$\O_n$ to be the subgroup of $\GL_n$ preserving $\alpha_n^\O$.

As explained in \cite[\S2.2]{WyserYong},
 geometric obstructions prevent us from being able to completely characterize
 the family $\{\iG_z\}_{z \in I_n}$ using 
  divided difference operators as we did for $\{\Gfpf_z\}_{z \in \ISp_n}$ in Theorem~\ref{sp-thm1}.
  We mention in passing one special situation where the operators $\varpi_i$ 
  do act on $\iG_z$ as one would expect.

\begin{proposition} Let $z \in \I_n$ and $i \in [n-1]$ be such that $z(i) > z(i+1)$.
Assume $z\neq s_i z s_i$ are both vexillary. Then $\varpi_i \iG_z = \iG_{s_i zs_i}$. \end{proposition}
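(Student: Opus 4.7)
The plan is to prove this geometrically by realizing $\varpi_i$ as the Demazure pull--push $\pi_i^* (\pi_i)_*$ along the projection $\pi_i : \Fl_n \to \Fl_n / P_i$ onto the partial flag variety obtained by forgetting the $i$-th subspace. Via the isomorphism of Theorem~\ref{iso-thm}, this is the standard geometric realization of the $K$-theoretic divided difference operator. Combined with Theorem~\ref{welldef-thm} and the uniqueness clause of Theorem~\ref{thm:intro-iX-polynomials} (using stability, Theorem~\ref{thm:poly-stability}), the proposition reduces to the identity $\pi_i^*(\pi_i)_* [\iX_z] = [\iX_{s_i z s_i}]$ in $\CK(\Fl_n)$.

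First I would run a short combinatorial check: under the hypothesis $z(i) > z(i+1)$ with $z \neq s_i z s_i$, a case analysis on $\{z(i),z(i+1)\} \cap \{i,i+1\}$ shows that $|\DO(s_i z s_i)| = |\DO(z)| - 1$, so $\iX_{s_i z s_i} \supsetneq \iX_z$ has codimension one less. The target geometric identity then splits into two sub-claims: (a)~$\pi_i|_{\iX_z} : \iX_z \to \pi_i(\iX_z)$ is an isomorphism, so that $(\pi_i)_*[\iX_z] = [\pi_i(\iX_z)]$, and (b)~$\pi_i^{-1}(\pi_i(\iX_z)) = \iX_{s_i z s_i}$ scheme-theoretically, so that $\pi_i^*[\pi_i(\iX_z)] = [\iX_{s_i z s_i}]$.

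For both sub-claims, the key input is vexillarity on both sides. By Lemma~\ref{vex-lem}, vexillarity makes $\Ess(\DO(z))$ a chain, and this lets us realize $\iX_z$ as (the pullback to $\Fl_n$ of) a Lagrangian Grassmannian degeneracy locus in the sense of Section~\ref{subsec:degeneracy-loci}, and similarly for $\iX_{s_i z s_i}$. Conjugation by $s_i$ deletes exactly one essential rank condition from the chain description; matching this to what $\pi_i^{-1}(\pi_i(-))$ does to rank conditions (it erases the condition at row $i$) gives (b). For (a), the vexillarity of $s_i z s_i$ ensures that the generic fiber of $\pi_i|_{\iX_{s_i z s_i}}$ over $\pi_i(\iX_z)$ is a single $\PP^1$ meeting $\iX_z$ transversely at one point, which forces $\pi_i|_{\iX_z}$ to be an isomorphism onto its image. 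Cohen--Macaulayness and rational singularities of the Lagrangian degeneracy loci (inherited by $\iX_z$ under the chain-of-essential-conditions hypothesis) then guarantee that $R^j(\pi_i)_*\cO_{\iX_z}=0$ for $j>0$, so the $K$-theoretic pushforward is exactly the structure sheaf of the image.

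The main obstacle is sub-claim~(a). A~priori $\pi_i|_{\iX_z}$ is only birational onto its image, and the failure to be an actual isomorphism is exactly what produces the extra $-\beta$-type correction term which breaks the naive divided-difference recurrence in the general (non-vexillary) $K$-theoretic setting; in cohomology this failure is invisible. What rescues us is that vexillarity of \emph{both} $z$ and $s_i z s_i$ keeps the essential sets chains throughout, so no additional $\O_n$-orbit can sneak into $\pi_i^{-1}(\pi_i(\iX_z))$ alongside $\iX_z$, leaving the projection genuinely injective on $\iX_z$ and its preimage genuinely reduced and irreducible.
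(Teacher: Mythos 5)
Your overall strategy---realizing $\varpi_i$ as the pull--push $\pi_i^*(\pi_i)_*$ along $\pi_i : \Fl_n \to \Fl_n/P_i$ and reducing the proposition to the identity $\pi_i^*(\pi_i)_*[\iX_z] = [\iX_{s_izs_i}]$ in $\CK(\Fl_n)$---is the same geometric framework the paper invokes (it is exactly the content of the discussion in \cite{WyserYong}, \S 2.2, which the paper cites). But two steps in your execution have genuine gaps. First, sub-claim (a) is both too strong and incorrectly justified: knowing that the generic fiber of $\pi_i$ over $\pi_i(\iX_z)$ meets $\iX_z$ in a single point shows only that $\pi_i|_{\iX_z}$ is \emph{birational} onto its image, not an isomorphism. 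In general $\iX_z$ contains $\pi_i$-saturated boundary orbit closures, i.e.\ entire $\PP^1$-fibers that get contracted, so injectivity fails (the analogous map for a Schubert variety is almost never an isomorphism either). What the argument actually needs is $R(\pi_i)_*\cO_{\iX_z} = \cO_{\pi_i(\iX_z)}$, i.e.\ birationality together with normality of the image and vanishing of higher direct images; the sufficient condition used in \cite{WyserYong} is that $\iX_z$ and $\iX_{s_izs_i}$ both have rational singularities. Your closing remark that ``no additional orbit can sneak into $\pi_i^{-1}(\pi_i(\iX_z))$ alongside $\iX_z$'' also conflates $\iX_z$ with $\pi_i^{-1}(\pi_i(\iX_z)) = \iX_{s_izs_i}$, which is strictly larger by your own codimension count.

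Second, and more seriously, the decisive input---that $\iX_z$ and $\iX_{s_izs_i}$ have rational singularities---is asserted rather than proved. Saying that $\iX_z$ ``inherits'' rational singularities from the Lagrangian Grassmannian degeneracy loci is circular here: by Lemma~\ref{lem:degeneracy-loci} the degeneracy locus in question \emph{is} $\iX_z$, so one still has to establish the singularity statement for it. This is where the paper does its real work: it introduces the auxiliary variety $\widetilde{X}^{\O}_z \subseteq \LG_{2n}\times\Fl_n$, uses the chain property of $\Ess(\DO(z))$ from Lemma~\ref{vex-lem} to produce an isotropic flag making the fibers of $\widetilde{X}^{\O}_z \to \Fl_n$ Schubert varieties in $\LG_{2n}$, deduces rational singularities of $\widetilde{X}^{\O}_z$ from Elkik's theorem, and then identifies $\iX_z$ with the reduced fiber of the other projection $\widetilde{X}^{\O}_z \to \LG_{2n}$ over $\Gamma(\alpha^{\O}_n)$ to conclude, again by Elkik. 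Without some version of this argument (or another proof that both varieties have rational singularities), your proof does not close.
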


    \begin{proof}
The operators $\varpi_1, \varpi_2,\dots,\varpi_{n-1}$ preserve $\ILambda_n[\beta]$ so descend to operators on $\CK(\Fl_n) = \ZZ[\beta][x_1,\dots,x_n]/\ILambda_n[\beta]$.
It is enough to prove  $\varpi_i[\iX_z] = [\iX_{s_izs_i}] \in \CK(\Fl_n)$,
since then Theorems~\ref{welldef-thm} and \ref{thm:poly-stability}
imply that $\varpi_i \iG_z - \iG_{s_i zs_i} \in \ILambda_N[\beta]$ for all $N \geq n$,
which is only possible if $\varpi_i \iG_z = \iG_{s_i zs_i}$
by Lemma~\ref{ilam-lem}.

        As explained in \cite[\S 2.2]{WyserYong}, 
        to prove that 
        $\varpi_i[\iX_z] = [\iX_{s_izs_i}] \in \CK(\Fl_n)$
        it suffices to show that $\iX_z$ and $\iX_{s_i z s_i}$ have rational singularities. 
        Following our earlier convention,
        given an orbit $E = Bg \in \Fl_n$ where $g \in \GL_n$, let $E_i$ be the subspace of $\CC^n$ spanned by the first $i$ rows of $g$.
        For a subspace $V\subseteq \CC^n$, write $V^\perp$ for the subspace of linear maps in ${\CC^n}^*$ that vanish on $V$.
        Define $\widetilde{X}^{\O}_z  $ to be the closure in $ \LG_{2n} \times \Fl_n$ of the set of pairs $(U,E) \in \LG_{2n}\times \Fl_n$ satisfying
        \[
        \dim(U \cap (E_j \oplus E_i^\perp)) = j - \rank(z_{[i][j]}) \text{ for all $(i,j) \in \Ess(\DO(z))$}.
        \]
        Since $z$ is vexillary,  the elements of $\Ess(\DO(z))$ form a chain $(i_1,j_1),\dots,(i_s,j_s)$ in the order $\SWNEleq$ from Lemma~\ref{vex-lem}.
        If $E \in \Fl_n$ then 
            \begin{equation*}
            E_{j_1} \oplus E_{i_1}^{\perp} \subseteq \cdots \subseteq E_{j_s} \oplus E_{i_s}^{\perp}
        \end{equation*}
        is an isotropic flag in $\CC^n \oplus {\CC^n}^*$.
        This makes it clear than 
        the fiber over $E \in \Fl_n$ of the obvious projection $\widetilde{X}^{\O}_z \to \Fl_n$  is isomorphic to a Schubert variety in $\LG_{2n}$. Schubert varieties have rational singularities \cite[\S 8.2.2]{Kumar}, so the same is true of $\widetilde{X}^{\O}_z$ by \cite[Th\'eor\`eme 2]{Elkik}.

        Let $\iota : \Fl_n \hookrightarrow \LG_{2n} \times \Fl_n$ be the inclusion $E \mapsto (\graph(\alpha^{\O}_n), E)$. We claim that $\iX_z$ is the scheme-theoretic fiber $\iota^{-1}(\widetilde{X}^{\O}_z)$.  It follows from Lemma~\ref{lem:degeneracy-loci} that $\iota^{-1}(\widetilde{X}^{\O}_z)$ and $\iX_z$ agree as sets, so it suffices to show that $\iota^{-1}(\widetilde{X}^{\O}_z)$ is reduced. Let $\pi : \widetilde{X}^{\O}_z \to \LG_{2n}$ be projection onto the first factor. Then $\iota$ is an isomorphism $\iota^{-1}(\widetilde{X}^{\O}_z) \to \pi^{-1}(\graph(\alpha^{\O}_n))$, and the fiber $\pi^{-1}(\graph(\alpha^{\O}_n))$ is reduced because $\pi$ is a fiber bundle over an open subset of $\LG_{2n}$ containing $\graph(\alpha^{\O}_n)$ \cite[Lemma 5.3]{Pawlowski}. This establishes the claim, so  $\iX_z$ has rational singularities by \cite[Th\'eor\`eme 3]{Elkik} and the fact that $\widetilde{X}^{\O}_z$ has rational singularities.
The same argument applied to $s_izs_i$ shows that $\iX_{s_izs_i}$ also has rational singularities, so we have $\varpi_i[\iX_z] = [\iX_{s_izs_i}] \in \CK(\Fl_n)$
by the discussion in \cite[\S2.2]{WyserYong}.
  \end{proof}

The \emph{orthogonal code} of $z \in \I_n$ is the sequence $c^\O(z) = (c_1,c_2,\dots,c_n)$
where $c_i$ 
 is the number of positions
in the $i$th row of $ \DO(z)$.
The \emph{orthogonal shape} $\shO(z)$ of $z \in \I_n$ is the transpose of the partition 
sorting $c^\O(z)$.
These objects are denoted $\hat c(z)$ and $\mu(z)$ in \cite[\S4.3]{HMP4}.
If  $z = n\cdots 321 \in \I_n$,
for example,
then we have
$\shO(z) = (n-1,n-3,n-5,\dots).$

\begin{lemma}[{See \cite[\S5.2]{Pawlowski}}] \label{lem:degeneracy-loci} Suppose $z \in I_n$ is vexillary so that 
\[\Ess(\DO(z)) = \{(i_1, j_1) \SWNEneq (i_2, j_2) \SWNEneq \cdots \SWNEneq (i_s, j_s)\}\]
where $\SWNEleq$ is the order in Lemma~\ref{vex-lem}.
Let $\lambda$, $\cV$, $\cG$, and $\cF^\bullet $ be given as follows:
    \begin{itemize}
    \item[(i)] Define $\lambda = \shO(z)$.
    
        \item[(ii)] Define $\cV$ to be the trivial bundle $\CC^n \oplus {\CC^n}^*$ over $\Fl_n$ equipped with the skew-symmetric form $\langle \cdot,\cdot\rangle^-$.
        
        \item[(iii)] Define $\cG$ to be the trivial bundle $\Gamma(\alpha_n^{\O})$ over $\Fl_n$.
        
        \item[(iv)] Let $\cF^\bullet$  denote the flag $\cE_{j_1} \oplus \cE_{i_1}^{\perp} \subseteq \cdots \subseteq \cE_{j_s} \oplus \cE_{i_s}^{\perp}$,   where $\cE_i$ for $i \in [n]$ is the tautological bundle of $\Fl_n$
        whose fiber over an orbit $Bg \in \Fl_n$ for $g \in \GL_n$ is the subspace of $\CC^n$ spanned by the first $i$
        rows of $g$.
    \end{itemize}
Then, in the notation of Definition~\ref{lg-def} (with the ambient variety given by $\Fl_n$), we have $X^{\O}_z=\Omega^{\LG}_{\lambda}(\cV, \cG, \cF^{\bullet})$.
\end{lemma}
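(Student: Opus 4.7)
The plan is to verify the set-theoretic equality $\iX_z = \Omega^{\LG}_{\lambda}(\cV, \cG, \cF^{\bullet})$ by unpacking both sides as subsets of $\Fl_n$ and translating the rank conditions from Proposition~\ref{ess-rpo} into incidence conditions on the Lagrangian flag. As a preliminary, I would verify that (iv) does define an isotropic partial flag of subbundles: whenever $j \leq i$, any two elements $(v_1, \omega_1), (v_2, \omega_2) \in E_j \oplus E_i^{\perp}$ satisfy $\omega_a(v_b) = 0$ since $v_b \in E_j \subseteq E_i$, so $\langle (v_1, \omega_1), (v_2, \omega_2)\rangle^- = 0$. Since every essential box $(i_k, j_k)$ satisfies $j_k \leq i_k$, each $\cE_{j_k} \oplus \cE_{i_k}^\perp$ is isotropic, and $\Gamma(\alpha_n^\O)$ is Lagrangian of rank $n$ by symmetry of $\alpha_n^\O$.

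The core linear-algebra computation is the dimension formula
\[
\dim\bigl(\Gamma(\alpha_n^\O) \cap (E_j \oplus E_i^{\perp})\bigr) = j - \rank\bigl(\alpha_n^\O|_{E_i \times E_j}\bigr),
\]
valid for every $E = Bg \in \Fl_n$ and every pair $(i,j)$. Any element of the intersection has the form $(v, \alpha_n^\O(v,\cdot))$ with $v \in E_j$ and $\alpha_n^\O(v,w) = 0$ for all $w \in E_i$, so the intersection is isomorphic to the kernel of the map $E_j \to E_i^*$ sending $v \mapsto \alpha_n^\O(\cdot, v)$, which has dimension $j - \rank(\alpha_n^\O|_{E_i \times E_j})$.

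Next, by Remark~\ref{rem:essential-rank-conditions} it suffices to impose the conditions $\dim(\cG_E \cap \cF^{\lambda_i}_E) \geq i$ only for $i \in \OC(\lambda)$. The key combinatorial claim, which I would extract from \cite[\S5.2]{Pawlowski}, is that when $z$ is vexillary, $\lambda = \shO(z)$ is a strict partition whose corners $\OC(\lambda)$ are in bijection with $\Ess(\DO(z))$, via a correspondence sending $(i_k, j_k)$ to an index $c_k \in \OC(\lambda)$ satisfying $\lambda_{c_k} = i_k - j_k + 1$ and $c_k = j_k - \rank(z_{[i_k][j_k]})$. Since $\rank(\cE_{j_k} \oplus \cE_{i_k}^\perp) = n - (i_k - j_k)$ matches the required rank $n - \lambda_{c_k} + 1$, the first identity forces $\cF^{\lambda_{c_k}} = \cE_{j_k} \oplus \cE_{i_k}^\perp$, so the partial flag in (iv) supplies exactly the pieces needed to evaluate the degeneracy locus.

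Combining these ingredients, the condition $\dim(\cG_E \cap \cF^{\lambda_{c_k}}_E) \geq c_k$ translates via the dimension formula into $\rank(\alpha_n^\O|_{E_{i_k} \times E_{j_k}}) \leq \rank(z_{[i_k][j_k]})$, and imposing this over all $k \in [s]$ recovers the essential-set description of $\iX_z$ from Proposition~\ref{ess-rpo}. The main obstacle in this plan is the third paragraph: one must show that for vexillary $z$ the shape $\shO(z)$ is strict and that its corners encode $\Ess(\DO(z))$ via the explicit formulas above. This is a technical statement about the staircase-of-rectangles structure of the Rothe diagram $\DO(z)$ that is forced by the chain $\Ess(\DO(z))$, and the careful bookkeeping it requires is the heart of the matter; everything else is formal.
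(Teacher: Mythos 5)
Your proposal is correct and follows essentially the same route as the paper, which disposes of this lemma by reducing it to the equivalence of the two families of rank conditions and citing \cite[Lemmas 5.5 and 5.6]{Pawlowski}: your dimension formula $\dim\bigl(\Gamma(\alpha_n^\O) \cap (E_j \oplus E_i^{\perp})\bigr) = j - \rank(\alpha_n^\O|_{E_i \times E_j})$ is exactly the linear-algebra content of those lemmas, and like the paper you ultimately defer the combinatorial bookkeeping about $\shO(z)$ to \cite{Pawlowski}. One small imprecision: the map $(i_k,j_k) \mapsto c_k = j_k - \rank(z_{[i_k][j_k]})$ need not be a bijection onto $\OC(\lambda)$; what the paper's lemma preceding Theorem~\ref{thm:GO-pfaffian} establishes (and what your argument actually needs, via Remark~\ref{rem:essential-rank-conditions}) is that this map is injective with image $S(z)$ \emph{containing} $\OC(\lambda)$, which already guarantees that the conditions indexed by $\Ess(\DO(z))$ include all the essential ones.
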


\begin{proof}
It suffices to show that the rank conditions defining $X^{\O}_z$
(given in terms of $z$) are equivalent to the rank conditions defining $\Omega^{\LG}_{\lambda}(\cV, \cG, \cF^{\bullet})$ (given in terms of $\lambda$),
and this follows from \cite[Lemmas 5.5 and 5.6]{Pawlowski}.
\end{proof}

Let $\bar x := 0\ominus x = \frac{-x}{1+\beta x}$
where $\ominus$ is as in \eqref{oplus-def}.
For each $z \in I_n$, let 
\[S(z) := \{q - \rank(z_{[p][q]}) : (p,q) \in \Ess(\DO(z))\}.\]

\begin{lemma}
Assume $z \in I_n$ is vexillary. The following properties hold: 
\begin{enumerate}[(a)]
    \item $|S(z)| = |\Ess(\DO(z))|$.
    \item $S(z)$ contains the set $\OC(\shO(z))$ defined in Theorem~\ref{thm:anderson-pfaffians}.
\end{enumerate}
\end{lemma}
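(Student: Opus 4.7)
Vexillarity enters through Lemma~\ref{vex-lem}, which guarantees that $\Ess(\DO(z))$ is totally ordered as a chain $(p_1, q_1) \SWNEneq \cdots \SWNEneq (p_s, q_s)$ under $\SWNEleq$. My plan is to exploit the reformulation $q - r(p, q) = |\{b \in [q] : z^{-1}(b) > p\}|$, which counts the values in $[q]$ not achieved by the first $p$ entries of $z$. Since $p_k$ weakly decreases and $q_k$ weakly increases along the chain, these subsets of $[n]$ are weakly nested, so (a) amounts to verifying strict containment at every step.

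For (a), the key tool is the reformulation that for involutions $z$ with $q \leq p$, the membership $(p, q) \in \DO(z)$ is equivalent to $q \leq p < z(q)$ together with $q < z(p)$. Applied at level $k+1$, this gives $z^{-1}(q_{k+1}) > p_{k+1}$, so if $q_{k+1} > q_k$ then $b = q_{k+1}$ is witnessed in the later set but excluded from the earlier. If $q_{k+1} = q_k$ then necessarily $p_{k+1} < p_k$, and essentiality of $(p_{k+1}, q_k)$ forces $(p_{k+1}+1, q_k) \notin \DO(z)$; parsing this negation gives $z(p_{k+1}+1) \leq q_k$ or $z(q_k) = p_{k+1}+1$, and either case (using involutivity in the second) provides $a = p_{k+1}+1 \in (p_{k+1}, p_k]$ with $z(a) \leq q_k$, contributing to $r_k - r_{k+1}$ and forcing strict growth.

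For (b), I will invoke Lemma~\ref{lem:degeneracy-loci} to identify $\iX_z$ with the Lagrangian Grassmannian locus $\Omega^{\LG}_{\shO(z)}(\cV, \cG, \cF^\bullet)$, whose flag $\cF^\bullet = (\cE_{q_k} \oplus \cE_{p_k}^\perp)_k$ is built from the essential set. Matching ranks $n + q_k - p_k = n - \lambda_{i_k} + 1$ identifies $\cF^{\lambda_{i_k}} = \cE_{q_k} \oplus \cE_{p_k}^\perp$ and yields $\lambda_{i_k} = p_k - q_k + 1$ for $i_k = q_k - r_k \in S(z)$. The LG defining condition at an index $i$ with $\lambda_i = \lambda_{i+1}+1$ is implied by the condition at $i+1$, since $\cF^{\lambda_i}$ then sits in $\cF^{\lambda_{i+1}}$ with codimension one and so $\dim(\cG \cap \cF^{\lambda_i}) \geq \dim(\cG \cap \cF^{\lambda_{i+1}}) - 1$; thus only indices in $\OC(\lambda)$ impose genuinely new constraints. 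Since the independent constraints cutting out $\iX_z$ come precisely from $\Ess(\DO(z))$ via Pawlowski's \cite[Lemma 5.6]{Pawlowski}, each $i \in \OC(\lambda)$ must appear as some $q_k - r_k$, yielding $\OC(\lambda) \subseteq S(z)$. The main obstacle is making this ``matching of essential conditions'' fully rigorous; a self-contained alternative would verify the formula $\lambda_i = \max\{p - q + 1 : (p, q) \in \DO(z),\ q - r(p, q) \geq i\}$ directly and then show that at each drop level the maximum is attained by an essential cell.
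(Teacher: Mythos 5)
Your part (a) is correct and is essentially the paper's argument in different clothing: the paper shows directly that $\rank(z_{[p_2][q_2]}) - \rank(z_{[p_1][q_1]}) < q_2 - q_1$ for two comparable essential cells, splitting into the cases $q_1 = q_2$ and $q_1 < q_2$, which is exactly your strict-nesting claim for the complements $\{b \in [q] : z^{-1}(b) > p\}$. Your case analysis (when $q_{k+1} > q_k$, the element $q_{k+1}$ itself witnesses strictness because $(p_{k+1},q_{k+1}) \in \DO(z)$ forces $z(q_{k+1}) > p_{k+1}$; when $q_{k+1} = q_k$, essentiality of $(p_{k+1},q_k)$ forces $z(p_{k+1}+1) \leq q_k$) is sound, and the second alternative in your parsing of the negation is in fact vacuous since $p_{k+1}+1 \leq p_k < z(q_k)$.

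Part (b) as written has a genuine gap. The statement is a purely combinatorial containment $\OC(\shO(z)) \subseteq S(z)$, and your geometric argument does not deliver it. Knowing that the rank conditions indexed by $S(z)$ cut out $\iX_z$ (Proposition~\ref{ess-rpo}) and that the conditions at indices outside $\OC(\lambda)$ are implied by their neighbors does not let you conclude that every $i \in \OC(\lambda)$ "must appear as some $q_k - r_k$": two different subsets of the rank conditions can define the same variety without one containing the other, and nothing you have said rules out a condition at some $i \in \OC(\lambda) \setminus S(z)$ being a consequence of a \emph{combination} of the conditions indexed by $S(z)$. Moreover, your identification $\lambda_{i_k} = p_k - q_k + 1$ with $i_k = q_k - \rank(z_{[p_k][q_k]})$ is itself the nontrivial combinatorial input, not a consequence of rank bookkeeping in Lemma~\ref{lem:degeneracy-loci} (which delegates precisely this matching to external lemmas). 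What is actually needed — and what the paper uses — is the explicit formula $\shO(z)_k = i_p - j_p + 1 + k_p - k$ for $k_{p-1} < k \leq k_p$, where $\Ess(\DO(z)) = \{(i_1,j_1) \SWNEneq \cdots \SWNEneq (i_s,j_s)\}$ and $S(z) = \{k_1 < \cdots < k_s\}$ with $k_s = \ell(\shO(z))$ (this is \cite[Lemma 4.23]{Pawlowski}). From that formula, $k \notin S(z)$ immediately forces $\shO(z)_k = \shO(z)_{k+1} + 1$, which is the contrapositive of (b). Your proposed "self-contained alternative" (proving $\lambda_i = \max\{p-q+1 : (p,q) \in \DO(z),\ q - \rank(z_{[p][q]}) \geq i\}$ and analyzing where the drops occur) is the right kind of statement, but you have not carried it out, so as it stands part (b) is unproved.
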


\begin{proof} For part (a), we observe that  the Rothe diagram $D(z)$ is formed by removing from $[n]\times [n]$ all positions
$(i+j,z(i))$ and $(i,z(i)+j)$  for $i \in [n]$ and $j\geq 0$,
and $\DO(z)$ is the subset of positions $(p,q) \in D(z)$ with $p \geq q$.
Suppose $(p_1,q_1),(p_2,q_2)\in \Ess(\DO(z))$ are distinct.
By Lemma~\ref{vex-lem}, we may assume that $p_1 \geq p_2$ and $q_1 \leq q_2$.

It suffices to show that $\rank(z_{[p_2][q_2]}) - \rank(z_{[p_1][q_1]}) < q_2 - q_1$. 
If $q_1=q_2$ then clearly $\rank(z_{[p_2][q_2]}) - \rank(z_{[p_1][q_1]}) \leq 0$
and we cannot have equality since this would imply that
$(i,q_1) = (i,q_2) \in \DO(z)$ for all $p_2 \leq i \leq p_1$, contradicting $(p_2,q_2)\in \Ess(\DO(z))$.
If $q_1 < q_2$ then $\rank(z_{[p_2][q_2]}) - \rank(z_{[p_1][q_1]})$ is bounded above by the number of pairs $(i,z(i))$ with $1\leq i \leq p_2$ and $q_1 < z(i) \leq q_2$,
which is at most $q_2-q_1-1$ since no such pair has $z(i) = q_2$ as $(p_2,q_2) \in \DO(z)$.

For part (b), write $\DO(z) = \{(i_1, j_1) \prec \cdots \prec (i_s, j_s)\}$ where $\prec$ is the order defined in Lemma~\ref{vex-lem}. Also let $S(z) = \{k_1 < \cdots < k_s\}$. Then $k_s = \ell(\shO(z))$ and for any $k \leq \ell(\shO(z))$, we have $\shO(z)_k = i_p - j_p + 1 + k_p - k$ where $p$ is such that $k_{p-1} < k \leq k_p$ \cite[Lemma 4.23]{Pawlowski}. This implies that if $k \notin S(z)$, then $\shO(z)_k = \shO(z)_{k+1}+1$.
\end{proof}

\begin{theorem} \label{thm:GO-pfaffian} 
Suppose $z \in I_n$ is a vexillary involution with shape $\lambda = \shO(z)$.
Let $r$ be the smallest even integer with $\ell(\lambda) \leq r$.
For each $i \in [\ell(\lambda)]$, let
\be\label{ci-eq}
c^{(i)}  =   \sum_{d\geq 0} c_d^{(i)} t^d  
 := \prod_{m=1}^{p} (1 + x_m t) \prod_{m=1}^{q} \left(1 + \bar{x}_m t\right)^{-1}
 \ee
 where 
 $(p,q) \in \Ess(\DO(z))$ is such that $q - \rank(z_{[p][q]})  = \min\{ s \in S(z): i \leq s\}$. If $r = \ell(\lambda)+1$ then also let $c^{(r)} = 1$.
The polynomial $\iG_z$ is then the Pfaffian of the $r \times r$  skew-symmetric matrix whose $(i,j)$ entry for $i < j$ is
\be
\label{fkm-eq}
       R^{(i,j)}
        \(1-\beta T^{(i)}\)^{r-i-\lambda_i}\(1-\beta T^{(j)}\)^{r-j-\lambda_j} c_{\lambda_i}^{(i)} c_{\lambda_j}^{(j)}
        \ee
where $T^{(i)}$ 
is the raising operator acting on $c_d^{(i)}$
and $R^{(i,j)}$ is defined by \eqref{Rij-eq}.
\end{theorem}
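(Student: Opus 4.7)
The plan is to combine Anderson's Pfaffian formula (Theorem~\ref{thm:anderson-pfaffians}) with the realization of $\iX_z$ as a Lagrangian Grassmannian degeneracy locus provided by Lemma~\ref{lem:degeneracy-loci}, compute the relevant Chern polynomials explicitly, and then pass from an equality in $\CK(\Fl_n)$ to a polynomial identity via the uniqueness asserted in Theorem~\ref{thm:intro-iX-polynomials}.

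First I would set $\lambda = \shO(z)$ and list the essential set as $\Ess(\DO(z)) = \{(i_1,j_1) \prec \cdots \prec (i_s, j_s)\}$, which is a chain by Lemma~\ref{vex-lem}. Since $z$ is vexillary, Lemma~\ref{lem:degeneracy-loci} identifies $\iX_z = \Omega^{\LG}_\lambda(\cV, \cG, \cF^\bullet)$, where $\cV = \CC^n \oplus {\CC^n}^*$ and $\cG = \graph(\alpha^\O_n)$ are trivial bundles over $\Fl_n$ and the relevant pieces of the isotropic flag are $\cE_{j_k} \oplus \cE_{i_k}^\perp$ for $k \in [s]$. The codimension hypothesis of Anderson's theorem holds since $\codim \iX_z = |\DO(z)| = |\lambda|$. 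By the preceding lemma, $S := S(z) = \{s_k := j_k - \rank z_{[i_k][j_k]} : k \in [s]\}$ has $|S| = s$ and contains $\OC(\lambda)$, making it a valid choice in Theorem~\ref{thm:anderson-pfaffians}. A direct comparison using the formula $\shO(z)_{s_k} = i_k - j_k + 1$ (visible in the proof of the preceding lemma) confirms that the piece $\cF^{\lambda_{s_k}}$ of Anderson's flag coincides with $\cE_{j_k} \oplus \cE_{i_k}^\perp$.

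Next I would compute $c^{(i)} = c(\cV - \cG - \cF^{\lambda_{s}}, t)$, where $s = s_k \in S(z)$ is minimal with $i \leq s$. Since $\cV$ and $\cG$ are trivial, $c(\cV, t) = c(\cG, t) = 1$, so $c^{(i)} = c(\cE_{j_k} \oplus \cE_{i_k}^\perp, t)^{-1}$. The tautological filtration has line bundle quotients $L_m$ with $c_1(L_m^\vee) = x_m$ by Theorem~\ref{iso-thm}, and property (c) of Chern classes then yields $c_1(L_m) = \bar x_m$, giving $c(\cE_j, t) = \prod_{m=1}^j (1 + \bar x_m t)$. Dually, the exact sequence $0 \to \cE_i^\perp \to {\CC^n}^* \to \cE_i^* \to 0$ with trivial ${\CC^n}^*$ forces $c(\cE_i^\perp, t) = \prod_{m=1}^i (1 + x_m t)^{-1}$. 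Multiplying and inverting produces
\[
c^{(i)} = \prod_{m=1}^{i_k} (1 + x_m t) \prod_{m=1}^{j_k} (1 + \bar x_m t)^{-1},
\]
matching \eqref{ci-eq} with $(p,q) = (i_k, j_k)$. Anderson's theorem now expresses $[\iX_z] \in \CK(\Fl_n)$ as the claimed Pfaffian.

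To upgrade this to an equality of polynomials, I would argue that the Pfaffian \eqref{fkm-eq} evaluates to an element of $\ZZ[\beta][x_1, \ldots, x_n]$ whose image in $\CK(\Fl_n) = \ZZ[\beta][x_1,\ldots,x_n]/\ILambda_n[\beta]$ coincides with $[\iX_z] = \iG_z + \ILambda_n[\beta]$ by Theorem~\ref{welldef-thm}. Because $\shO(z) = \shO(z \times 1)$ and $\Ess(\DO(z)) = \Ess(\DO(z \times 1))$, and because only variables $x_m$ with $m \leq \max_k\{i_k, j_k\} \leq n$ appear, the Pfaffian formula yields literally the same polynomial whether applied to $z \in I_n$ or $z \times 1 \in I_{n+1}$. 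This is precisely the stability condition in Theorem~\ref{thm:intro-iX-polynomials}, which forces the Pfaffian to equal $\iG_z$.

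The main obstacle I anticipate is justifying that \eqref{fkm-eq} genuinely produces a polynomial. The factor $(1 - \beta T^{(i)})^{r-i-\lambda_i}$ has negative exponent whenever $\lambda_i > r - i$, and $R^{(i,j)}$ has an infinite formal expansion in $T^{(i)}$, so a priori each matrix entry is a formal power series in the indeterminates $c^{(i)}_d$. Reconciling this formal expression with an honest polynomial in the $x_m$ requires tracking which monomials survive after the raising operators act and the explicit values of $c^{(i)}_d$ from \eqref{ci-eq} are substituted. This is implicit in Anderson's raising-operator framework---the resulting element represents a class of codimension $|\lambda|$ in $\CK(\Fl_n)$---but making it explicit at the level of polynomials may need additional care, perhaps by fixing an interpretation of the operator calculus in $\ZZ[\beta][[x_1,\ldots,x_n]]$ and verifying finiteness of the supports.
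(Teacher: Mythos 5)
Your proposal follows the paper's proof essentially step for step: Lemma~\ref{lem:degeneracy-loci} plus Anderson's Theorem~\ref{thm:anderson-pfaffians} with $S = S(z)$, the same Chern class computation giving \eqref{ci-eq}, and stability under $z \mapsto z \times 1$ to pin down the answer. The one obstacle you flag at the end --- that the Pfaffian is a priori only a formal power series, so Theorem~\ref{thm:intro-iX-polynomials} (a uniqueness statement about \emph{polynomials}) does not directly apply --- is real, and the paper resolves it by reversing the logic rather than by proving polynomiality of the Pfaffian directly: it works in $\ZZ[\beta][[x_1,\dots,x_N]]$ modulo the ideal $\ILambda'_N[\beta]$ generated by symmetric power series, observes that $\iG_z$ is the \emph{unique} power series with polynomial homogeneous components representing $[\iX_{1^{N-n}\times z}]$ in all of these quotients for $N \geq n$ (by Theorem~\ref{thm:poly-stability} and Lemma~\ref{ilam-lem}), and notes that the Pfaffian shares this property since the matrix is unchanged under $z \mapsto z\times 1$. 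Equality with $\iG_z$, and hence polynomiality, then follows as a consequence --- which is consistent with the paper's later remark that the individual matrix entries need not be polynomials even though the Pfaffian is.
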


\begin{proof}
    Since $\codim(X^{\O}_z) = |\shO(z)|$ (see \cite[Theorem 4.6]{RichSpring}),
 Theorem~\ref{thm:anderson-pfaffians} and Lemma~\ref{lem:degeneracy-loci} imply that $[\iX_z] \in \CK(\Fl_n)$ is the Pfaffian of the $r\times r$ skew-symmetric matrix $\fk M$ with entries \eqref{eq:pfaffian-entries} where $\lambda = \shO(z)$,
 $S = S(z)$, and
$
        c^{(i)} := c(\cV - \cG - (\cE_q \oplus \cE_p^{\perp}), t),
$
where $p$ and $q$ are such that
$(p,q) \in \Ess(\DO(z))$ and $q - \rank(z_{[p][q]})  = \min\{ s \in S(z): i \leq s\}$. Using the triviality of $\cV$ and $\cG$, the canonical isomorphism $E_i^{\perp} \cong (\CC^n/E_i)^*$, and the basic properties of Chern classes presented in Section~\ref{subsec:degeneracy-loci}, we deduce that 
    \begin{align*}
        c^{(i)}  = \frac{1}{c(\cG, t) c(\cE_q,t) c(\cE_p^{\perp},t)} = \frac{c(\cE_p^*, t)}{c(\cE_q, t)} = \frac{\prod_{m=1}^p (1 + x_m t)}{\prod_{m=1}^q (1 + \bar{x}_m t)}.
    \end{align*}
    Thus $c^{(i)}$ is as in \eqref{ci-eq}, so 
    $\fk M$ is the skew-symmetric matrix with entries \eqref{fkm-eq},
    and we have $ \pf(\fk M) = [\iX_z] \in \CK(\Fl_n)$.
 
Let $\ILambda'_n$ denote the ideal in $\ZZ[[x_1,\dots,x_n]]$
generated by the symmetric formal power series,
so that $\ILambda_n = \ILambda'_n \cap \ZZ[x_1,\dots,x_n]$.
The entries of $\fk M$, and therefore also $\pf(\fk M)$,
belong to the ring of formal power series $\ZZ[\beta][[x_1,\dots,x_n]]$,
and the assertion  $ \pf(\fk M) = [\iX_z] \in \CK(\Fl_n)$
means that $\pf(\fk M) \in \iG_z +  \ILambda'_n[\beta]$.
We claim that in fact $\pf(\fk M) = \iG_z$ as polynomials.

Theorem~\ref{thm:poly-stability} and Lemma~\ref{ilam-lem} imply that $\iG_z$ is the unique polynomial with $\iG_z + \ILambda_N[\beta] = [\iX_z] \in \CK(\Fl_N)$ for all $N\geq n$. In fact, 
it follows that $\iG_z$ is unique among 
formal power series in $\ZZ[\beta][[x_1,x_2,\dots]]$ 
that are polynomials in each fixed degree such that
$\iG_z + \ILambda_N'[\beta]$ coincides with the image of $[\iX_z]$ 
under the inclusion $\ZZ[\beta][x_1,\dots,x_N]/\ILambda_N[\beta] \hookrightarrow \ZZ[\beta][[x_1,\dots,x_N]]/\ILambda'_N[\beta]$ for all $N\geq n$.
But $\pf(\fk M)$ also has this property, since 
$\fk M$ does not change if we replace $z$ by $z\times 1$. 
We must therefore have $\pf(\fk M) = \iG_z$.
\end{proof}

\begin{example} \label{ex:GO-pfaffian} 
Let $z = 21\in I_2$ 
 so  $\lambda = (1)$, $r = 2$, and $\Ess(\DO(z)) = \{(1,1)\}$. Then
$
        c^{(1)} = \frac{1 + x_1 t}{1 + \bar{x}_1 t} 
        $
        and
        $ c^{(2)} = 1$
and $\iG_{21} = \pf \left[\begin{smallmatrix} 0 & f \\ -f & 0 \end{smallmatrix}\right] = f
:= R^{(1,2)}c_1^{(1)} c_0^{(2)}.
$
    Since $1/T^{(2)}$ annihilates $c_1^{(1)}c_0^{(2)}$ and since $c_0^{(2)} = 1$, we have 
    \begin{equation*}
        f = \tfrac{1}{1 - \beta T^{(1)}} c_1^{(1)} = \sum_{m \geq 0} \beta^m c_{m+1}^{(1)} = \beta^{-1}\( \tfrac{1 + x_1 \beta}{1 + \bar{x}_1 \beta} - 1\) = 2x_1 + \beta x_1^2.
    \end{equation*}
    This gives $\iG_{21} =  2x_1 + \beta x_1^2$ which agrees with Example~\ref{o-ex}.
\end{example}

Example~\ref{ex:GO-pfaffian} required a little algebra to simplify the infinite sums resulting from Theorem~\ref{thm:GO-pfaffian} to polynomials.  We now describe a change of variable which handles these simplifications in general.

We have been working with certain expressions $c_m^{(i)}$ that we often view as formal indeterminates.
Let $D_1,D_2,D_3\dots$ be another sequence of commuting indeterminates, and 
if $f$ is a linear combination of monomials $c_{m_1}^{(1)} \cdots c_{m_\ell}^{(\ell)}$, then 
define $\Phi(f)$ to be the formal sum obtained by replacing each $c_{m_i}^{(i)}$ by $D_i^{m_i}/m_i!$.
 Then $\Phi((1/T^{(i)})f) = \frac{\partial}{\partial D_i} \Phi(f)$ and
\be\label{int-eq}
    \Phi(T^{(i)}f) = \int_0^{D_i} f(D_1, \ldots, D_{i-1}, u, D_{i+1}, \ldots, D_l)\,du.
\ee
For example, we have
\begin{equation*}
    \Phi\(\(1-\beta T^{(1)}\)^{-1}c_1^{(1)}\) = \Phi\left(\sum_{m\geq 0}\beta^m c_{m+1}^{(1)}\right) = \sum_{m \geq 0} \frac{\beta^m D_1^{m+1}}{(m+1)!} =  \frac{e^{\beta D_1} - 1}{\beta}.
\end{equation*}
For integers $r \in \PP$ and $a \in \ZZ$, define 
    \begin{equation*}
        F_{r,a}(D) := \tfrac{1}{(r-1)!}\left( \tfrac{\partial}{\partial D} \right)^{r-a-1} \( D^{r-1} e^{\beta D} \)
    \end{equation*}
    where $(\frac{\partial}{\partial D})^{-1} f(D) := \int_0^{D} f(u)\,du$ and $(\frac{\partial}{\partial D})^{-m} := ((\frac{\partial}{\partial D})^{-1})^{m}$ for $m > 0$. We also set $F_{0,a}(D) = D^a/a!$.

\begin{proposition} \label{prop:phi-pfaffian} For any integers $r,s \in \PP$ and $a,b \in \ZZ$, the expression 
    \begin{equation} \label{eq:phi-pfaffian}
        \Phi\(R^{(1,2)} \(1-\beta T^{(1)}\)^{-r}\(1-\beta T^{(2)}\)^{-s} c_a^{(1)}c_b^{(2)} \)
    \end{equation}
    is equal to
    \begin{equation*}
        e^{\beta D_1} \int_0^{D_1} e^{-\beta u} \Bigl(F_{r,a-1}(u)F_{s,b}(u+D_2-D_1) - F_{r,a}(u)F_{s,b-1}(u+D_2-D_1) \Bigr) du.
    \end{equation*}
\end{proposition}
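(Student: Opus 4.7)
The plan is to exploit the factorization $R^{(1,2)} = (1 - T^{(1)}/T^{(2)})\bigl(1 - T^{(1)}(\beta - 1/T^{(2)})\bigr)^{-1}$, and translate each step under $\Phi$ using the rules $\Phi(T^{(1)} f) = \int_0^{D_1} f(u, D_2)\,du$ and $\Phi((1/T^{(2)}) f) = \partial_{D_2}\Phi(f)$ from \eqref{int-eq}. As a preliminary, I would first verify that $\Phi((1-\beta T^{(i)})^{-r} c_a^{(i)}) = F_{r,a}(D_i)$ by matching $\sum_{m \geq 0}\binom{m+r-1}{m}\beta^m c_{a+m}^{(i)}$ under $\Phi$ against the closed form $F_{r,a}(D) = \sum_{m \geq 0}\binom{m+r-1}{m}\beta^m D^{a+m}/(a+m)!$ obtained by Taylor-expanding $D^{r-1}e^{\beta D}$ and differentiating $r-a-1$ times.

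For the main computation, I first handle the inverse factor. Expanding the geometric series $\bigl(1 - T^{(1)}(\beta - 1/T^{(2)})\bigr)^{-1} = \sum_{n\geq 0}(T^{(1)})^n(\beta - 1/T^{(2)})^n$ and taking $\Phi$ produces $\sum_{n\geq 0}(T^{(1)})^n F_{r,a}(D_1) \cdot (\beta - \partial_{D_2})^n F_{s,b}(D_2)$, with $T^{(1)}$ now acting by integration on $[0,D_1]$. The conjugation identity $(\beta - \partial_{D_2})^n = (-1)^n e^{\beta D_2}\partial_{D_2}^n e^{-\beta D_2}$, combined with Cauchy's iterated integration formula $(T^{(1)})^n f(D_1) = \frac{1}{(n-1)!}\int_0^{D_1}(D_1-u)^{n-1} f(u)\,du$, lets me recognize the resulting $n$-series as the translation operator $\sum_{n \geq 0}\frac{(-(D_1-u))^n}{n!}\partial_{D_2}^n = e^{-(D_1-u)\partial_{D_2}}$ applied to $e^{-\beta D_2}F_{s,b}(D_2)$. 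Evaluating this translation and performing a single integration by parts in $u$ using $\partial_u F_{r,a}(u) = F_{r,a-1}(u)$, the $n$-sum collapses to
\[
H := e^{\beta D_1}F_{r,a}(0)F_{s,b}(D_2-D_1) + e^{\beta D_1}\int_0^{D_1} e^{-\beta u}F_{r,a-1}(u)F_{s,b}(u+D_2-D_1)\,du.
\]

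To conclude, I would apply $(1 - T^{(1)}/T^{(2)}) \leftrightarrow (1 - T^{(1)}\partial_{D_2})$. Since $\partial_{D_2}H$ is just $H$ with $F_{s,b}$ replaced by $F_{s,b-1}$, the expression $T^{(1)}\partial_{D_2}H = \int_0^{D_1}\partial_{D_2}H(D_1',D_2)\,dD_1'$ is a nested double integral, which I would simplify by swapping the order of integration and integrating by parts in $D_1'$, using the antiderivative identity $\int_0^{D_1}F_{r,a-1}(u)\,du = F_{r,a}(D_1) - F_{r,a}(0)$. The boundary contributions at $D_1' = 0$ and $D_1' = D_1$ then combine precisely with the non-integral $F_{r,a}(0)F_{s,b}(D_2-D_1)$ term of $H$, leaving exactly the stated right-hand side.

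The main obstacle will be organizing these two rounds of integration by parts so that the half-dozen boundary terms produced at $u = 0,D_1$ and $D_1' = 0,D_1$ cancel in pairs; the conceptual identity enabling these cancellations is $\beta F_{s,b}(v) - F_{s,b-1}(v) = -e^{\beta v}\partial_v(e^{-\beta v}F_{s,b}(v))$, which recasts the $(\beta - 1/T^{(2)})$-structure inherited from the inverse factor as a single total derivative.
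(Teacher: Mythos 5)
Your route is genuinely different from the paper's. The paper never expands $R^{(1,2)}$ at all: it uses the exact operator identity $(1/T^{(1)}+1/T^{(2)}-\beta)R^{(1,2)} = 1/T^{(1)}-1/T^{(2)}$ to show that the left-hand side $G(D_1,D_2)$ satisfies the first-order equation $(\partial_{D_1}+\partial_{D_2}-\beta)G = F_{r,a-1}(D_1)F_{s,b}(D_2)-F_{r,a}(D_1)F_{s,b-1}(D_2)$, asserts $G(0,D_2)=0$, and solves the resulting initial value problem along lines where $D_2-D_1$ is constant. You instead expand $\bigl(1-T^{(1)}(\beta-1/T^{(2)})\bigr)^{-1}$ as a geometric series and resum. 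Your preliminary identity $\Phi\bigl((1-\beta T^{(i)})^{-r}c_a^{(i)}\bigr)=F_{r,a}(D_i)$, the factorization of $R^{(1,2)}$ (which is consistent with the paper's prescribed expansion), and your intermediate expression $H$ are all correct.

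The gap is in the last step: the boundary terms do not do what you claim. Swapping the order of integration and substituting $u\mapsto D_1-t$, one finds that
\begin{equation*}
\int_0^{D_1}\partial_{D_2}H(D_1',D_2)\,dD_1' \;=\; e^{\beta D_1}\int_0^{D_1}e^{-\beta u}F_{r,a}(u)F_{s,b-1}(u+D_2-D_1)\,du
\end{equation*}
exactly, with no leftover boundary contributions: the $-F_{r,a}(0)$ piece arising from $\int_0^{X}F_{r,a-1}=F_{r,a}(X)-F_{r,a}(0)$ is cancelled by the integral of the non-integral term of $\partial_{D_2}H$ itself, not by the non-integral term of $H$. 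Consequently $H-T^{(1)}\partial_{D_2}H$ equals the stated right-hand side \emph{plus} the surviving term $e^{\beta D_1}F_{r,a}(0)F_{s,b}(D_2-D_1)$. This term vanishes precisely when $a\geq 1$ (for then $F_{r,a}(0)=0$), which covers every use of the proposition in Theorem~\ref{thm:GO-pfaffian}, where $a=\lambda_i\geq 1$; but for $a\leq 0$ it does not vanish, and the identity as stated genuinely fails there: for $r=s=1$ and $a=b=0$ the left-hand side has constant term $1$, while the right-hand side vanishes at $D_1=0$. (The paper's own proof has the same blind spot, since the assertion $G(0,D_2)=0$ also requires $a\geq 1$.) So keep your computation, but replace the claimed cancellation by the observation that the extra term is $e^{\beta D_1}F_{r,a}(0)F_{s,b}(D_2-D_1)$ and add the hypothesis $a\geq 1$ under which it is zero.
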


\begin{proof}
Using the fact that $(1-\beta x)^{-r} = \sum_{k=0}^{\infty} {r+k-1 \choose r-1} \beta^k x^k$ 
it is routine to verify $\Phi((1-\beta T^{(i)})^{-r} c_a) = F_{r,a}(D)$. Set 
$M := \(1-\beta T^{(1)}\)^{-r}\(1-\beta T^{(2)}\)^{-s} c_a^{(1)}c_b^{(2)}$ so that $\Phi(M) = F_{r,a}(D_1) F_{s,b}(D_2)$, and define
    \begin{align*}
        \Theta(D_1, D_2) = \tfrac{\partial \Phi(M)}{\partial D_1} - \tfrac{\partial \Phi(M)}{\partial D_2} = F_{r,a-1}(D_1)F_{s,b}(D_2) - F_{r,a}(D_1)F_{s,b-1}(D_2).
    \end{align*}
Now let $G(D_1, D_2)$ be the expression in \eqref{eq:phi-pfaffian}. We have
\[
\ba
        \left(\tfrac{\partial}{\partial D_1} + \tfrac{\partial}{\partial D_2} - \beta\right)G(D_1, D_2) &= \Phi\((1/T^{(1)} +1/T^{(2)} - \beta)R^{(1,2)} M\)   \\
        &= \Phi\((1/T^{(1)}-1/T^{(2)})M\) \\&= \Theta(D_1, D_2).
\ea
\]
     The rational function in $T^{(1)}$ and $ T^{(2)}$ appearing inside $\Phi$ in \eqref{eq:phi-pfaffian} only involves nonnegative powers of $T^{(1)}$ when expanded as a Laurent series in $T^{(1)}$, so we have $G(0,D_2) = 0$. Thus, if we define $\tilde{G}(u) := G(u, u+D_2-D_1)$, then $\tilde{G}(0) = 0$. By the multivariate chain rule and
     our expression for $ \(\tfrac{\partial}{\partial D_1} + \tfrac{\partial}{\partial D_2} - \beta\)G(D_1, D_2)$ derived above,
we deduce that
     \begin{align*}
        \tfrac{\partial }{\partial u}\tilde{G} - \beta \tilde{G} &= \tfrac{\partial G}{\partial D_1}(u, u+D_2-D_1) + \tfrac{\partial G}{\partial D_2}(u, u+D_2-D_1) - \beta \tilde{G}\\
        &= \Theta(u, u+D_2-D_1).
     \end{align*}
    Therefore $\tilde{G}(u)$ is the unique solution to the initial value problem $\tfrac{\partial }{\partial u} \tilde{G}-\beta \tilde{G} = \Theta(u, u+D_2-D_1)$ and $\tilde{G}(0) = 0$, which one checks to be 
     \begin{equation*}
        e^{\beta u} \int_0^{u} e^{-\beta v} \Theta(v, v+D_2-D_1) dv.
     \end{equation*}
         As $G(D_1,D_2) = \tilde{G}(D_1)$, the result follows.
    \end{proof}

The next proposition gives an algorithm for computing the inverse map $\Phi^{-1}$.

\begin{proposition} \label{prop:phi-inverse}
Suppose $G$ is a formal infinite linear combination of monomials in the $D_i$ with coefficients in $\QQ[\beta]$. The
following properties then hold:
\begin{enumerate}[(a)]
    \item Interpreting $D_i$ as  $\frac{\partial}{\partial t_i}\big|_{t_i=0}$, we have $\Phi^{-1}(G) = G\(c^{(1)}(t_1)\cdots c^{(l)}(t_l)\)$.
    
    \item Interpreting $D_i$ as  $\frac{\partial}{\partial t}\big|_{t=0}$, we have  
    $\Phi^{-1}(D_i^m e^{\beta D_i}) = \frac{\partial^m}{\partial t^m} c^{(i)}\big|_{t= \beta}$. 

\end{enumerate}
\end{proposition}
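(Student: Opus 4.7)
The plan is to reduce everything to monomials, where $\Phi$ and $\Phi^{-1}$ can be read off directly from the definition. By construction,
\[
\Phi(c_{m_1}^{(1)}\cdots c_{m_\ell}^{(\ell)}) = \tfrac{D_1^{m_1}\cdots D_\ell^{m_\ell}}{m_1!\cdots m_\ell!},
\qquad
\Phi^{-1}(D_1^{m_1}\cdots D_\ell^{m_\ell}) = m_1!\cdots m_\ell!\, c_{m_1}^{(1)}\cdots c_{m_\ell}^{(\ell)},
\]
and these rules extend uniquely by $\QQ[\beta]$-linearity to formal (possibly infinite) linear combinations, since distinct monomials on one side correspond to distinct monomials on the other.

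For part (a), I would apply the operator $G = D_1^{m_1}\cdots D_\ell^{m_\ell}$, interpreted as $\prod_i(\partial/\partial t_i)^{m_i}$ followed by evaluation at $t_1 = \cdots = t_\ell = 0$, to the product $c^{(1)}(t_1)\cdots c^{(\ell)}(t_\ell) = \prod_i \sum_{d \geq 0} c_d^{(i)} t_i^d$. Differentiating the $i$th factor $m_i$ times and setting $t_i = 0$ extracts exactly $m_i!\, c_{m_i}^{(i)}$ from that factor, yielding $m_1!\cdots m_\ell!\, c_{m_1}^{(1)}\cdots c_{m_\ell}^{(\ell)}$, which matches $\Phi^{-1}(G)$. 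Extending by $\QQ[\beta]$-linearity then establishes the formula for arbitrary $G$.

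For part (b), I would expand $D_i^m e^{\beta D_i} = \sum_{k \geq 0} \tfrac{\beta^k}{k!}\, D_i^{m+k}$ and apply part (a) term by term to obtain
\[
\Phi^{-1}(D_i^m e^{\beta D_i}) = \sum_{k \geq 0} \tfrac{\beta^k(m+k)!}{k!}\, c_{m+k}^{(i)}.
\]
Independently, differentiating $c^{(i)}(t) = \sum_d c_d^{(i)} t^d$ a total of $m$ times and evaluating at $t = \beta$ produces the same series after re-indexing $d = m+k$. Equivalently, once part (a) is in hand, one can invoke Taylor's identity $e^{\beta\,\partial/\partial t} f(t)\big|_{t=0} = f(\beta)$ to see that acting by $D_i^m e^{\beta D_i}$ and then setting $t = 0$ is the same as acting by $\partial^m/\partial t^m$ and then setting $t = \beta$, deriving (b) directly from (a).

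This result is essentially a bookkeeping identity, and there is no substantial obstacle. The one point requiring care is the interpretation of $D_i^m e^{\beta D_i}$ as a differential operator in which the evaluation at $t = 0$ is applied only after the full exponential, so that $e^{\beta D_i}$ implements translation by $\beta$ rather than degenerating to a constant; once this convention is fixed, the proof is a direct calculation.
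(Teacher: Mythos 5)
Your proposal is correct and follows essentially the same route as the paper: reduce to monomials using $\Phi^{-1}(D_i^m/m!) = c_m^{(i)} = \tfrac{1}{m!}\tfrac{\partial^m}{\partial t^m}c^{(i)}\big|_{t=0}$, extend multiplicatively and linearly for (a), and for (b) expand $e^{\beta D_i}$ and recognize the resulting series as the Taylor shift evaluating $\tfrac{\partial^m}{\partial t^m}c^{(i)}$ at $t=\beta$. Both of your computations check out, so no further comment is needed.
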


\begin{proof}
    For part (a), observe that $\Phi^{-1}(D_i^m/m!) = c_m^{(i)} = \frac{1}{m!}\frac{d^m}{dt^m} c^{(i)} \big|_{t=0}$. Part (b)
    holds since we have
$
    D_i^m e^{\beta D_i} c^{(i)} = \sum_{d\geq 0} \tfrac{\beta^d}{d!}  \tfrac{\partial^{m+d}}{\partial t^{m+d}}c^{(i)}\big|_{t=0} =\tfrac{\partial^m}{\partial t^m} c^{(i)}\big|_{t=\beta}.
$
\end{proof}

\begin{example} \label{ex:integrals}
    Let us compute $\iG_z$ for $z = 3412 = (1,3)(2,4)$. We have
\[
 \DO(z) = 
      \boxed{  \begin{array}{cccc}
            \circ & \cdot & \times & \cdot\\
            \circ & \circ & \cdot & \times\\
            \times & \cdot & \cdot & \cdot\\
            \cdot & \times & \cdot & \cdot
        \end{array}
        }
\]
so $\Ess(\DO(z)) = \{(2,2)\}$ and $\shO(z) = (2,1)$. Theorem~\ref{thm:GO-pfaffian}
 implies that
\[
\iG_{3412}= R^{(1,2)} \(1-\beta T^{(1)}\)^{-1} \(1-\beta T^{(2)}\)^{-1} c_2^{(1)} c_1^{(2)}
\]
    where 
$
        c^{(1)} = c^{(2)} = \frac{1+x_1 t}{1 + \bar{x}_1 t} \frac{1+x_2 t}{1 + \bar{x}_2 t}.
$
Following Proposition~\ref{prop:phi-pfaffian}, we have
   \[
        \Theta(D_1, D_2) = F_{1,1}(D_1)F_{1,1}(D_2) - F_{1,2}(D_1)F_{1,0}(D_2).\]
        The terms $F_{r,a}(D_i)$ on the right are
        \[
        \ba
        F_{1,1}(D_1)&= \int_0^{D_1} e^{\beta u}\,du = \beta^{-1} e^{\beta D_1} - \beta^{-1},
        \\
        F_{1,1}(D_2) &=\int_0^{D_2} e^{\beta u}\,du=\beta^{-1} e^{\beta D_2} - \beta^{-1},
        \\
        F_{1,2}(D_1)&=\int_0^{D_1} \int_0^{v} e^{\beta u}\,du\,dv = \beta^{-2} e^{\beta D_1}  - \beta^{-2} - \beta^{-1} D_1,\\
        F_{1,0}(D_2)&= e^{\beta D_2},
        \ea\]
so 
$ \Theta(D_1, D_2)=  \beta^{-1} D_1 e^{\beta D_2} - \beta^{-2} e^{\beta D_1} + \beta^{-2}$
    and then
\[\ba
        \Phi(\iG_{3412}) &= e^{\beta D_1} \int_0^{D_1} e^{-\beta u} \Theta(u, u+D_2-D_1) du\\
        &= \tfrac{1}{2} \beta^{-1} D_1^2 e^{\beta D_2} - \beta^{-2} D_1 e^{\beta D_1} + \beta^{-3} e^{\beta D_1} - \beta^{-3}.
\ea\]
Finally, using Proposition~\ref{prop:phi-inverse}, we compute that
    \begin{align*}
        \iG_{3412} &= \Phi^{-1}\( \tfrac{1}{2} \beta^{-1} D_1^2 e^{\beta D_2} - \beta^{-2} D_1 e^{\beta D_1} + \beta^{-3} e^{\beta D_1} - \beta^{-3}\) 
        \\&= \tfrac{1}{2}\beta^{-1} \cdot \tfrac{\partial^2}{\partial t^2} c^{(1)} \big|_{t=0} \cdot c^{(2)}\big|_{t=\beta} - \beta^{-2}\cdot  \tfrac{\partial}{\partial t} c^{(1)}  \big|_{t=\beta} + \beta^{-3} \cdot c^{(1)}\big|_{t=\beta}- \beta^{-3}\\
        &= 4x_1 x_2^2 + 4x_1^2 x_2 + 2\beta x_1 x_2^3 + 8\beta x_1^2 x_2^2 + 2\beta x_1^3 x_2 + 3\beta^2 x_1^2 x_2^3
        \\&\quad\ + 3\beta^2 x_1^3 x_2^2 + \beta^3 x_1^3 x_2^3 \\
        & = (x_1 \oplus x_1)(x_1\oplus x_2)(x_2\oplus x_2)
    \end{align*}
which agrees with Theorem~\ref{dom-thm}.
\end{example}

\begin{example}
    Let $z = 4571263= (1,4)(2,5)(3,7)  \in I_7$, so that
    \begin{equation*} \DO(z) = 
    \boxed{
        \begin{array}{ccccccc}
            \circ & \cdot & \cdot & \times & \cdot & \cdot & \cdot\\
            \circ & \circ & \cdot & \cdot & \times & \cdot & \cdot\\
            \circ & \circ & \circ & \cdot & \cdot & \cdot & \times\\
            \times & \cdot & \cdot & \cdot & \cdot & \cdot & \cdot\\
            \cdot & \times & \cdot & \cdot & \cdot & \cdot & \cdot\\
            \cdot & \cdot & \circ & \cdot & \cdot & \times & \cdot\\
            \cdot & \cdot & \times & \cdot & \cdot & \cdot & \cdot
        \end{array}
        }
    \end{equation*}
    and $\Ess(\DO(z)) = \{(6,3), (3,3)\}$ and $\shO(z) = (4,2,1)$. In the notation of Theorem~\ref{thm:GO-pfaffian}, 
    one has $r = 4$, $S = \{1,3\}$, $c^{(4)} = 1$,  
\[
        c^{(1)} = \tfrac{(1+x_1 t)(1+x_2 t)\cdots (1+x_6 t)}{(1 + \bar{x}_1 t)(1 + \bar{x}_2 t)(1 + \bar{x}_3 t)},
\quand
 c^{(2)} = c^{(3)} = \tfrac{(1+x_1 t)(1 + x_2 t)(1+x_3 t)}{(1 + \bar{x}_1 t)(1 + \bar{x}_2 t)(1 + \bar{x}_3 t)}.
\]
    Theorem~\ref{thm:GO-pfaffian} tells us that $\iG_z = \pf(\fk M)$ where $\fk M$ is the $4 \times 4$ skew-symmetric matrix with entries
    \begin{align*}
        \fk M_{12} &:=  R^{(1,2)}\(1-\beta T^{(1)}\)^{-1} \(1-\beta T^{(2)}\)^{0} c_4^{(1)} c_2^{(2)},\\
        \fk M_{13} &:= R^{(1,3)}\(1-\beta T^{(1)}\)^{-1} \(1-\beta T^{(3)}\)^{0} c_4^{(1)} c_1^{(3)},\\
        \fk M_{14} &:= R^{(1,4)} \(1-\beta T^{(1)}\)^{-1} \(1-\beta T^{(4)}\)^{0} c_4^{(1)} c_0^{(4)},\\
        \fk M_{23} &:= R^{(2,3)} \(1-\beta T^{(2)}\)^{0} \(1-\beta T^{(3)}\)^{0} c_2^{(2)} c_1^{(3)},\\
        \fk M_{24} &:= R^{(2,4)} \(1-\beta T^{(2)}\)^{0} \(1-\beta T^{(4)}\)^{0} c_1^{(2)} c_0^{(4)},\\
        \fk M_{34} &:= R^{(3,4)} \(1-\beta T^{(3)}\)^{0} \(1-\beta T^{(4)}\)^{0} c_1^{(3)} c_0^{(4)}.
    \end{align*}
Calculating as in Example~\ref{ex:integrals}, we
find that 
$\iG_{4571263} \in \NN[\beta][x_1, x_2,\dots, x_6]$ is a polynomial with 865 terms
which begins as
\begin{align*}
&      \beta^{11} x_{1}^{5} x_{2}^{5} x_{3}^{5} x_{4} x_{5} x_{6} + \beta^{10} x_{1}^{5} x_{2}^{5} x_{3}^{5} x_{4} x_{5} + \beta^{10} x_{1}^{5} x_{2}^{5} x_{3}^{5} x_{4} x_{6} + \beta^{10} x_{1}^{5} x_{2}^{5} x_{3}^{5} x_{5} x_{6}  \\
        & + 5  \beta^{10} x_{1}^{5} x_{2}^{5} x_{3}^{4} x_{4} x_{5} x_{6} + 5  \beta^{10} x_{1}^{5} x_{2}^{4} x_{3}^{5} x_{4} x_{5} x_{6} + 5  \beta^{10} x_{1}^{4} x_{2}^{5} x_{3}^{5} x_{4} x_{5} x_{6} 
        \\& + ( \text{ \dots terms of lower degree in $\beta$ \dots }).
    \end{align*}
This polynomial has only 35 distinct nonzero coefficients, given by
    \[
\left\{\ba &
1,\, 2,\, 4,\, 5,\, 7,\, 8,\, 9,\, 10,\, 12,\, 16,\, 24,\, 28,\,   30,\, 32,\, 34,\, 41,\, 43,\, 64,\, 65,\, 72,\, 80, \\& 
109,\, 110,\, 116,\, 121,\, 128,\, 142,\, 159,\, 173,\, 177,\, 180,\, 246,\, 261,\, 292,\, 344
\ea \right\}.
\]
The entries of $\fk M$ are \emph{not} all polynomials, although $\pf(\fk M)$ is a polynomial.

\end{example}

\section{Stable Grothendieck polynomials}

The limit of a sequence of polynomials or formal power series is defined to converge
if the sequence of coefficients of any fixed monomial is eventually constant.
Let $n \in \PP$ and $w \in S_n$. 
Given $m \in \NN$, define $1^m \times w  \in S_{m+n}$
to be the permutation that maps $i \mapsto i$ for $i \leq m$ and $i + m \mapsto w(i) + m$
for $i \in \PP$. 
The \emph{stable Grothendieck polynomial} of $w$ is then
\be\label{G-eq}
G_w := \lim_{n \to \infty} \fkG_{1^n \times w} \in \ZZ[\beta][[x_1,x_2,\dots]].
\ee
Remarkably, this limit always converges and the resulting power series
is a symmetric function in the $x_i$ variables with many notable properties \cite[\S2]{Buch2002}.
In this section, we study the
natural analogues of \eqref{G-eq} for
 orthogonal and symplectic Grothendieck polynomials.

\subsection{$K$-theoretic symmetric functions}

To begin, we review some properties of $G_w$ and related symmetric functions.
If $\lambda = (\lambda_1 \geq \lambda_2 \geq \dots \geq \lambda_k > 0)$ is an integer partition, then a \emph{set-valued tableau} of shape $\lambda$
is a map $T : (i,j) \mapsto T_{ij}$ from the Young diagram
\[\YY_\lambda := \{(i,j) \in \PP\times \PP : j \leq \lambda_i\}\]
to the set of finite, nonempty subsets of $\PP$.
For such a map $T$, define
\[x^T := \prod_{(i,j) \in \YY_\lambda} \prod_{k \in T_{ij}} x_k
\qquand |T| := \sum_{(i,j) \in \YY_\lambda} |T_{ij}|.\]
A set-valued tableau $T$ is \emph{semistandard} if one has $\max(T_{ij}) \leq \min(T_{i,j+1})$
and $\max(T_{ij}) < \min(T_{i+1,j})$ for all relevant $(i,j) \in \YY_\lambda$.
Let $\SetSSYT(\lambda)$ denote the set of semistandard set-valued tableaux of shape $\lambda$.

\begin{definition}
The \emph{stable Grothendieck polynomial} of a partition $\lambda$ is 
\[ G_\lambda := \sum_{T \in \SetSSYT(\lambda)} \beta^{|T|-|\lambda|} x^T\in \ZZ[\beta][[x_1,x_2,\dots]].\]
\end{definition}

This definition sometimes appears in the literature with the parameter $\beta$
set to $\pm 1$.
This specialization is immaterial to most results since 
if we write $G^{(\beta)}_\lambda = G_\lambda$ then 
$(-\beta)^{|\lambda|}G^{(\beta)}_\lambda = G^{(-1)}_\lambda(-\beta x_1,-\beta x_2,\dots)$.
Setting $\beta=0$ transforms $G_\lambda$ to the usual Schur function $s_\lambda$.


The symmetric functions $G_\lambda$ are related to
$G_w$ for $w \in S_n$ by the following theorems.
Given a partition $\lambda = (\lambda_1 \geq \lambda_2 \geq \dots \geq \lambda_k>0)$ 
with $k+\lambda_1 \leq n$,
define $w_\lambda \in S_n$ to be the unique permutation
with $w_\lambda(i) =i+ \lambda_{k+1-i}$ for $i \in [k]$ and $w_\lambda(i) < w_\lambda(i+1)$
for all $k<i\leq n$.
Write $\sP$ for the set of all partitions.

\begin{theorem}[{\cite[Theorem 3.1]{Buch2002}}]
\label{buch-thm1}
If $\lambda$ is any  partition then $G_{w_\lambda} = G_\lambda$.
\end{theorem}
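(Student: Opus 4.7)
The plan is to combine the pipe dream formula $\fkG_w = \sum_S \beta^{|S|-\ell(w)} x^S$ (the ordinary analogue of Corollary~\ref{bjs-cor}, due to Fomin and Kirillov \cite[Theorem 2.3]{FK1994}) with a bijection between pipe dreams for Grassmannian permutations and flagged semistandard set-valued tableaux, and then to take $n \to \infty$ to remove the flag.

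First I would observe that $1^n \times w_\lambda$ is Grassmannian with unique descent at position $n + k$, where $k$ is the number of parts of $\lambda$. Standard arguments about (Hecke) words of Grassmannian permutations then force every subset $S$ with $\delta(S) \in \cH(1^n \times w_\lambda)$ to be supported in rows $n+1, n+2, \ldots, n+k$: the prefix of $n$ fixed points prevents any cross in rows $1, \ldots, n$, and the single descent beyond $n$ prevents crosses further down. After reindexing these rows by $i \mapsto i - n$, the pipe dreams live in rows $1, \ldots, k$.

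Next I would set up the bijection between such shifted pipe dreams and semistandard set-valued tableaux of shape $\lambda$ subject to a flag bound on entries, sending a cross at position $(i,j)$ (post-shift) to a contribution of ``$i$'' in an appropriate cell of $\YY_\lambda$. For reduced words this is the classical Billey--Jockusch--Stanley correspondence; Buch \cite{Buch2002} extended it to set-valued tableaux by using that a Hecke (rather than reduced) word can record the same simple reflection more than once in a given row, which corresponds to adding extra elements to a single tableau cell. The bijection preserves weights: $x^S = x^T$ and $|S| - \ell(w_\lambda) = |T| - |\lambda|$, using $\ell(w_\lambda) = |\lambda|$.

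Finally I would take $n \to \infty$. The flag bound arising from the finite size of $1^n \times w_\lambda$ only restricts how large the shifted entries in any row can be, and this bound grows with $n$, so every $T \in \SetSSYT(\lambda)$ is eventually realized by some $S$. The coefficient of any fixed monomial $x^T$ thus stabilizes, and the limiting sum is exactly $\sum_T \beta^{|T|-|\lambda|} x^T = G_\lambda$. The main obstacle is checking the bijection in the set-valued case: one must verify that Hecke (as opposed to reduced) words for a Grassmannian permutation correspond cleanly to multi-element fillings of $\YY_\lambda$ under the BJS map, which is the core technical content of Buch's original argument.
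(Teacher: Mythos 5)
Your overall strategy (Fomin--Kirillov pipe dream formula for the finite polynomials $\fkG_{1^n\times w_\lambda}$, a BJS-type bijection to set-valued tableaux, then letting $n\to\infty$ remove the flag bound) is a reasonable route, and it differs from Buch's actual argument, which works directly with the stable limit via Hecke words and compatible sequences read off from column words of set-valued tableaux; note the paper itself does not reprove this statement but simply cites \cite[Theorem 3.1]{Buch2002}. However, your key combinatorial claim is false: the prefix of $n$ fixed points does \emph{not} force the crosses of a pipe dream for $1^n\times w_\lambda$ into rows $n+1,\dots,n+k$. Take $\lambda=(1)$, so $1^n\times w_\lambda$ is the simple transposition $s_{n+1}$: the subsets $S$ with $\delta(S)\in\cH(s_{n+1})$ include the single cross at $(1,n+1)$ (letter $1+(n+1)-1=n+1$), which lies in row $1$ and contributes the monomial $x_1$; indeed $\fkG_{s_{n+1}}$ contains $x_1+x_2+\cdots+x_{n+1}$. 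What the single descent at position $n+k$ does give you is that no crosses occur \emph{below} row $n+k$; the crosses genuinely spread over rows $1,\dots,n+k$, and this is exactly what makes the limit $G_{w_\lambda}$ a nontrivial symmetric power series in $x_1,x_2,\dots$. If your support claim were true, every monomial of $\fkG_{1^n\times w_\lambda}$ would lie in $\ZZ[\beta][x_{n+1},\dots,x_{n+k}]$ and the coefficient of any fixed monomial would vanish for large $n$, so the limit could not equal $G_\lambda$.

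The reindexing step $i\mapsto i-n$ compounds the problem: the limit defining $G_{w_\lambda}$ is taken with the variables held fixed, so any bijection you use must preserve the monomial $x^S$ literally, not up to a shift of variable indices (a shift would instead compute something like $\lim_m \fkG_{1^m\times w_\lambda}(0,\dots,0,x_1,x_2,\dots)$, which is not the definition). The repairable version of your argument is: for the Grassmannian permutation $1^n\times w_\lambda$ with unique descent at $n+k$, the $K$-theoretic pipe dreams biject weight-preservingly with tableaux in $\SetSSYT(\lambda)$ whose entries (equal to the row indices of the crosses, unshifted) are bounded by the flag coming from the descent at $n+k$; this bound grows with $n$, so every $T\in\SetSSYT(\lambda)$ is eventually captured and the coefficients stabilize to those of $G_\lambda$. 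Establishing that bijection in the set-valued/Hecke setting is the real technical content, and as written your proposal skips it while resting on a support claim that fails already for $\lambda=(1)$.
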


\begin{theorem}[{\cite[Theorem 1]{BKSTY}}]
\label{buch-et-al-thm}
If $w \in S_n$ then 
$G_w \in \NN[\beta]\spanning\left\{ G_\lambda : \lambda \in \sP\right\}.$
\end{theorem}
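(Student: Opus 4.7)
The plan is to follow the combinatorial scheme of Buch, Kresch, Shimozono, Tamvakis, and Yong, proving the expansion directly via Hecke insertion. First I would combine the classical Fomin--Kirillov formula \cite[Theorem 2.3]{FK1994} for $\fkG_w$ (analogous to Corollary~\ref{bjs-cor}) with the defining limit $G_w = \lim_{n\to\infty} \fkG_{1^n\times w}$ to obtain the expansion
\[
    G_w = \sum_{(\mathbf{i},\mathbf{a})} \beta^{\ell - \ell(w)}\, x_{a_1} x_{a_2} \cdots x_{a_\ell},
\]
where the sum runs over pairs consisting of a Hecke word $\mathbf{i} = i_1 i_2 \cdots i_\ell$ for $w$ in the $0$-Hecke algebra (so $U_{i_1} U_{i_2}\cdots U_{i_\ell} = U_w$ in the notation of Section~\ref{sp-case-sect}) together with a weakly increasing sequence $\mathbf{a} = a_1 \leq a_2 \leq \cdots \leq a_\ell$ of positive integers satisfying the standard compatibility condition at ascents of $\mathbf{i}$.

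Next I would invoke Hecke (column) insertion, which takes each word $\mathbf{i}$ to an \emph{insertion tableau} $P(\mathbf{i})$ and each compatible pair $(\mathbf{i},\mathbf{a})$ to a \emph{recording tableau} $Q(\mathbf{i},\mathbf{a})$. The two key properties I would need are:
\begin{enumerate}
\item[(i)] For each shape $\lambda$, the map $(\mathbf{i},\mathbf{a}) \mapsto (P(\mathbf{i}),Q(\mathbf{i},\mathbf{a}))$ is a bijection from compatible pairs onto pairs $(P,Q)$ with $P$ an increasing tableau of shape $\lambda$ and $Q \in \SetSSYT(\lambda)$, and moreover $x_{a_1}\cdots x_{a_\ell} = x^{Q}$.
\item[(ii)] The Hecke product $U_{i_1}\cdots U_{i_\ell}$ depends only on $P(\mathbf{i})$, and equals $U_w$ if and only if the column reading word of $P(\mathbf{i})$ is itself a Hecke word for $w$.
\end{enumerate}
Granting (i) and (ii), grouping terms by $P$ and using $\ell = |Q|$ yields
\[
    G_w = \sum_{\lambda \in \sP} \alpha_{w,\lambda}(\beta)\, G_\lambda, \qquad \alpha_{w,\lambda}(\beta) := \sum_P \beta^{|\lambda| - \ell(w)},
\]
where the inner sum runs over increasing tableaux $P$ of shape $\lambda$ whose column reading word is a Hecke word for $w$. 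Since any such word contains a reduced subword for $w$, one has $|\lambda| \geq \ell(w)$, so $\alpha_{w,\lambda}(\beta) \in \NN[\beta]$, as required.

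The main obstacle is establishing (i) and (ii): the correctness, weight-preservation, and invertibility of Hecke column insertion. Verifying these requires a careful case analysis of the possible bumping behaviors at each insertion step (including ``non-bumping'' steps where a letter is absorbed into an existing row), together with a proof that each step preserves the Hecke equivalence class of the column reading word. This is the technical core of \cite{BKSTY} and is where essentially all the work in that paper is concentrated; once it is in place, the theorem follows by the bookkeeping above.
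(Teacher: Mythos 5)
The paper does not prove this statement at all: it is quoted verbatim as \cite[Theorem 1]{BKSTY}, so there is no internal argument to compare against. Your outline accurately reproduces the strategy of that cited paper: expand $G_w$ over Hecke words with compatible sequences via the Fomin--Kirillov formula and the stable limit, apply Hecke insertion, group by insertion tableau, and read off the coefficient of $G_\lambda$ as a $\beta$-power times a count of increasing tableaux whose reading word is a Hecke word for $w$. The bookkeeping at the end (including the observation that $|\lambda|\geq\ell(w)$ forces the coefficients into $\NN[\beta]$) is correct.

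That said, as you acknowledge, properties (i) and (ii) --- the well-definedness, weight-preservation, and bijectivity of Hecke insertion, and the fact that the Hecke equivalence class is carried by the insertion tableau alone --- are the entire content of the theorem; without them the argument is a statement of intent rather than a proof. One further point to be careful about if you were to fill in the details: the recording objects produced by Hecke insertion are set-valued tableaux of the \emph{conjugate} shape (or one must use column insertion and transpose conventions consistently), and the identification of the generating function of the $Q$-side with $G_\lambda$ requires matching the semistandardness conditions in $\SetSSYT(\lambda)$ exactly with the compatibility condition on $\mathbf{a}$; this matching is where sign/convention errors typically creep in.
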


Buch \cite{Buch2002} also derives a Littlewood-Richardson rule for
the stable Grothendieck polynomials $G_\lambda$,
which shows that the product $G_\lambda G_\mu$ is always a finite 
$\NN[\beta]$-linear combination of the functions $G_\nu$.


There are shifted analogues of $G_\lambda$ that will be 
related in a similar way to our orthogonal and symplectic analogues of \eqref{G-eq}.
Define the \emph{marked alphabet} to be totally ordered set 
 $\MM := \{1'<1<2'<2<\dots\}$,
and write
$|i'|  := |i| = i$ for $i \in \PP$.
If $\lambda =(\lambda_1>\lambda_2>\dots>\lambda_k>0)$ is a strict partition, then a \emph{shifted set-valued tableau} of shape $\lambda$
is a map $T : (i,j) \mapsto T_{ij}$ from the shifted diagram
\[\SS_\lambda := \{(i,i+j-1) \in \PP\times \PP : 1\leq j \leq \lambda_i\}\]
to the set of finite, nonempty subsets of  $\MM $.
Given such a map, define
\[x^T := \prod_{(i,j) \in \SS_\lambda} \prod_{k \in T_{ij}} x_{|k|}
\qquand
 |T| := \sum_{(i,j) \in \SS_\lambda} |T_{ij}|.\]
A shifted set-valued tableau $T$ is \emph{semistandard} if 
for all relevant $(i,j) \in \SS_\lambda$:
\begin{itemize}
\item[(a)] $\max(T_{ij}) \leq \min(T_{i,j+1})$ and $T_{ij} \cap T_{i,j+1} \subseteq \{1,2,3,\dots\}$.
\item[(b)] $\max(T_{ij}) \leq \min(T_{i+1,j})$ and $T_{ij} \cap T_{i+1,j} \subseteq \{1',2',3',\dots\}$.
\end{itemize}
In such tableaux, an unprimed number can appear at most once in a column, while a primed number 
can appear at most once in a row.
Let $\SetSSMT(\lambda)$ denote the set of semistandard shifted set-valued tableaux of shape $\lambda$.
\begin{definition}
The \emph{$K$-theoretic Schur $P$-function} and \emph{$K$-theoretic Schur $Q$-function}
of a strict partition $\lambda$ are the formal power series
\[
\GP_\lambda := \sum_{\substack{T \in \SetSSMT(\lambda) \\ T_{ii} \subseteq \PP\text{ if }(i,i) \in \SS_\lambda}} \beta^{|T|-|\lambda|} x^T
\quand
\GQ_\lambda := \sum_{T\in\SetSSMT(\lambda)} \beta^{|T|-|\lambda|} x^T.
\]
The summation defining $\GP_\lambda$ is over shifted set-valued tableaux with no primed numbers in any position on the main diagonal.
\end{definition}

These definitions are due to Ikeda and Naruse \cite{IkedaNaruse}, who
also show that
$\GP_\lambda$ and $\GQ_\lambda$ are symmetric in the $x_i$ variables
\cite[Theorem 9.1]{IkedaNaruse}.
Setting $\beta=0$ transforms $\GP_\lambda$ and $\GQ_\lambda$ 
to the \emph{Schur $P$-} and \emph{$Q$-functions} $P_\lambda$ and $Q_\lambda$.

\begin{example}
We have $\GP_{(1)} = G_{(1)} = s_{(1)} + \beta s_{(1,1)} + \beta^2 s_{(1,1,1)} + \dots $ while 
\[
\GQ_{(1)} = \sum_{m\in\PP} \sum_{1\leq i_1 < i_2 < \dots < i_m} \beta^{n-1} ( x_{i_1} \oplus x_{i_1})( x_{i_2} \oplus x_{i_2})\cdots (x_{i_m} \oplus x_{i_m})
\]
where $x\oplus y := x + y + \beta xy$ as in \eqref{oplus-def}.
\end{example}

Clifford, Thomas, and Yong prove a Littlewood-Richardson rule for the 
$\GP_\lambda$ functions in \cite{CTY},
which shows that each product $\GP_\lambda \GP_\mu$ is a finite $\NN[\beta]$-linear combination
of $\GP_\nu$ terms with positive coefficients; see the discussion in \cite[\S1]{HKPWZZ}.
A general Littlewood-Richardson rule for the $K$-theoretic Schur $Q$-functions $\GQ_\lambda$ is not yet known.
Each product $\GQ_\lambda \GQ_\mu$ is a linear combination of $\GQ_\nu$ terms
\cite[Proposition 3.5]{IkedaNaruse},
 but it is an open problem to
determine if these combinations are always finite \cite[Conjecture 3.2]{IkedaNaruse}.

\subsection{Orthogonal and symplectic variants}

Assume $n$ is even and let $z \in \ISp_n$
be a fixed-point-free involution in $S_n$.
Given $m \in \NN$, let
$(21)^{m} \times z = 21 \times 21 \times \cdots \times 21 \times z \in \ISp_{n+2m}$
denote  the involution 
 that maps $i \mapsto i - (-1)^i$ for $i \leq 2m$ and $i+2m \mapsto z(i) + 2m$ for $i \in \PP$.
We define the  \emph{symplectic stable Grothendieck polynomials} 
of $z$ to be the limit 
\be\label{gsp-eq}
\GSp_z := \lim_{m \to \infty} \Gfpf_{(21)^{m} \times z}.
\ee
These limits are always defined and have the following formula:

\begin{corollary}\label{gsp-cor}
If $z \in \ISp_n$ then  $\GSp_z = \sum_{w \in \HAfpf(z)} \beta^{\ell(w) - \ellfpf(z)} G_w.$
\end{corollary}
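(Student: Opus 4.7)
The plan is to apply Theorem~\ref{atom-thm} to $(21)^m \times z$ for each $m \geq 0$ and then pass to the limit $m \to \infty$. As a preliminary step, I will verify that $\ellfpf((21)^m \times z) = \ellfpf(z)$, which is immediate from Definition~\ref{rothe-def} and \eqref{ellfpf-eq} once one observes that $\DSp((21)^m \times z)$ is just the translate of $\DSp(z)$ by $(2m,2m)$. This lets me rewrite the identity from Theorem~\ref{atom-thm} as
\[
\Gfpf_{(21)^m \times z} = \sum_{w' \in \HAfpf((21)^m \times z)} \beta^{\ell(w') - \ellfpf(z)} \fkG_{w'}.
\]

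The central step, which I expect to be the main obstacle, is proving the combinatorial identity
\[
\HAfpf((21)^m \times z) = \{1^{2m} \times w : w \in \HAfpf(z)\}. \qquad(\ast)
\]
I will establish $(\ast)$ via Proposition~\ref{ha-prop}. A direct computation from the definition of $\betaMin$ shows that $\betaMin((21)^m \times z) = 1^{2m} \times \betaMin(z)$, so $(\ast)$ reduces to the claim that the $\approx_\fpf$-equivalence class of $1^{2m} \times \betaMin(z)$ consists exactly of permutations $1^{2m} \times v$ with $v \approx_\fpf \betaMin(z)$. The easy containment $\supseteq$ follows because any $\approx_\fpf$-move applied to $v^{-1}$ at even index $i$ lifts verbatim to a move on $(1^{2m} \times v)^{-1}$ at the (still even) index $i + 2m$. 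The hard containment requires ruling out any local move that straddles positions $[1,2m]$ and $[2m+1,\infty)$ of the inverse word. The only even index permitting such a boundary-crossing move is $i = 2m-2$, affecting positions $2m-1,2m,2m+1,2m+2$; but since the first two positions of the inverse word always hold the values $2m-1,2m$ while the latter two lie in $\{2m+1,2m+2,\dots\}$, sorting as $a<b<c<d$ shows the observed four-letter pattern is either $abcd$ or $abdc$, neither of which belongs to the admissible set $\{adbc, bcad, bdac\}$. Hence no boundary-crossing move is ever possible, and by induction on the number of moves the initial segment $1,2,\dots,2m$ of the inverse word is frozen throughout the equivalence class, yielding $(\ast)$.

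Given $(\ast)$ and the identity $\ell(1^{2m} \times w) = \ell(w)$, the sum above becomes
\[
\Gfpf_{(21)^m \times z} = \sum_{w \in \HAfpf(z)} \beta^{\ell(w) - \ellfpf(z)} \fkG_{1^{2m} \times w},
\]
indexed by the finite set $\HAfpf(z)$ which does not depend on $m$. Taking $m \to \infty$ termwise and using $G_w = \lim_{n \to \infty} \fkG_{1^n \times w}$ from Theorem~\ref{intro-thm2} then delivers the desired formula. The boundary analysis underlying $(\ast)$ is the only nontrivial step; everything else is bookkeeping.
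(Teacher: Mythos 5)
Your proof is correct and follows essentially the same route as the paper: the paper's proof consists precisely of the identity $\HAfpf((21)^{m} \times z) = \{ 1^{2m} \times w : w \in \HAfpf(z)\}$ (asserted as a consequence of Proposition~\ref{ha-prop}) combined with Theorem~\ref{atom-thm} and a termwise limit. Your boundary-window analysis just fills in the details of why Proposition~\ref{ha-prop} yields that identity, which the paper leaves to the reader.
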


\begin{proof}
Proposition~\ref{ha-prop} implies that
$\HAfpf\( (21)^{m} \times z\) = \{ 1^{2m} \times w : w \in \HAfpf(z)\}$
for all $z \in \ISp_n$ 
and $m \in \PP$,
so this follows from Theorem~\ref{atom-thm}.
\end{proof}

Let $\SLambda$ denote the set of all strict partitions.
The symmetric functions $\GSp_z$ were studied in \cite{Mar},
which proves the following analogue of Theorem~\ref{buch-et-al-thm}:

\begin{theorem}[{\cite[Theorem 1.9]{Mar}}]
\label{mar-thm}
If $z \in \ISp_\infty$ then 
\[\GSp_z \in \NN[\beta]\spanning\left\{ \GP_\lambda : \lambda \in \SLambda\right\}.\]
\end{theorem}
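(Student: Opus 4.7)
\textbf{Proof plan for Theorem \ref{mar-thm}.} The approach is to reduce the statement to a combinatorial bijection that refines the expansion already provided by Corollary \ref{gsp-cor}. The first step is to combine Corollary \ref{gsp-cor} with the standard word-generating-function formula for each $G_w$ (the stable analogue of Corollary \ref{bjs-cor}) to realize $\GSp_z$ as a weighted generating function over pairs consisting of a symplectic Hecke word $i_1 i_2 \cdots i_\ell \in \cHfpf(z)$ together with a weakly increasing sequence of row labels $a_1 \leq a_2 \leq \cdots \leq a_\ell$ (strictly increasing when the corresponding $i_k$ weakly decrease). Each such enriched word contributes a monomial $\beta^{\ell - \ellfpf(z)} x_{a_1} \cdots x_{a_\ell}$, matching the form of the summands in Corollary \ref{bjs-cor}.

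The next step is to design a $K$-theoretic shifted insertion algorithm adapted to the symplectic setting. The algorithm should take each enriched symplectic Hecke word and produce a pair $(P,Q)$, where $P$ is an increasing shifted tableau of some strict shape $\lambda$ and $Q$ is an element of $\SetSSMT(\lambda)$ whose main diagonal contains no primed entries. A natural starting point is to adapt the involution Edelman--Greene and shifted Hecke insertion constructions of Hamaker--Marberg--Pawlowski and Patrias--Pylyavskyy to the fixed-point-free setting, modifying the bumping rules so that bumps through the main diagonal generate unprimed recording entries while off-diagonal bumps may generate primed ones.

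The main obstacle is twofold. First, one must verify that the insertion is a bijection from enriched symplectic Hecke words for $z$ onto pairs $(P, Q)$ of the described form with matching shapes. Second, and more critically, one must show that the insertion tableau $P$ depends only on $z$ and not on the specific Hecke word chosen. The latter invariance amounts to identifying a complete set of involution $K$-Coxeter--Knuth relations that preserve $P$ and generate the equivalence on $\cHfpf(z) = \bigsqcup_{w \in \HAfpf(z)} \cH(w)$ induced by the characterization of $\HAfpf(z)$ in Proposition \ref{ha-prop}. While analogous $P$-invariance results for ordinary Edelman--Greene insertion and its $K$-theoretic extensions are classical, the fixed-point-free case requires a genuinely new set of local moves and a careful analysis along the main diagonal.

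Granting these two ingredients, summing first over all $Q$ of a fixed shape $\lambda$ produces $\GP_\lambda$ by definition, and then summing over the finite set of insertion tableaux $P$ reachable from $z$ gives
\[
\GSp_z = \sum_{P} \beta^{|P| - \ellfpf(z)}\, \GP_{\sh(P)},
\]
which exhibits $\GSp_z$ as an element of $\NN[\beta]\spanning\{\GP_\lambda : \lambda \in \SLambda\}$, as required. The hardest step is the construction and verification of the insertion algorithm; once this is in place, the positivity and integrality of the coefficients are immediate from counting the tableaux $P$.
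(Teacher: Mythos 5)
First, note that the paper does not prove this statement: Theorem~\ref{mar-thm} is quoted directly from \cite[Theorem 1.9]{Mar}, and no argument for it appears here. Your plan is, in outline, the strategy actually carried out in that reference: realize $\GSp_z$ as a generating function over symplectic Hecke words equipped with compatible weight sequences (via Corollary~\ref{gsp-cor} together with the compatible-pair expansion of each $G_w$), construct a shifted $K$-theoretic insertion adapted to the fixed-point-free setting, and show that the insertion tableau depends only on $z$ while the recording tableaux of a fixed strict shape $\lambda$ are exactly the semistandard shifted set-valued tableaux with unprimed diagonal, so that the inner sum produces $\GP_\lambda$. Your final bookkeeping is also consistent, since a word of length $\ell$ contributes $\beta^{\ell-\ellfpf(z)} = \beta^{|Q|-|\lambda|}\cdot\beta^{|\lambda|-\ellfpf(z)}$.

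That said, as a proof the proposal has a genuine gap, and you identify it yourself: the existence of an insertion map that is (i) a weight-preserving bijection onto pairs $(P,Q)$ of equal shifted shape with $Q$ of the stated form, and (ii) constant in $P$ across all of $\cHfpf(z) = \bigsqcup_{w \in \HAfpf(z)} \cH(w)$, is the entire mathematical content of the theorem, and you simply grant it. Neither property follows from the Edelman--Greene or shifted Hecke insertion results you invoke: the equivalence $\approx_\fpf$ of Proposition~\ref{ha-prop} identifies words belonging to \emph{different} Hecke atoms $w$, so ordinary $K$-Coxeter--Knuth invariance of the insertion tableau is not sufficient, and the diagonal rules governing primed versus unprimed recording entries must be engineered and verified from scratch (this is precisely what occupies most of \cite{Mar}). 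As written, then, the proposal is a correct road map that matches the cited proof in spirit, but it is not itself a proof.
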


There is also a symplectic analogue of Theorem~\ref{buch-thm1},
which shows that every $K$-theoretic Schur $P$-function occurs 
as $\GSp_z$ for some $n \in 2\PP$ and $z \in \ISp_n$; see \cite{MP}.
We mention one corollary of \cite[Theorem 1.9 and Corollary 3.27]{Mar}:

\begin{corollary}[See \cite{Mar}]
If $n \in 2\PP$ then $\GSp_{n\cdots 321} = \GP_{(n-2,n-4,n-6,\dots,2)}$.
\end{corollary}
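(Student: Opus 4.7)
The plan is to derive this identity from the expansion theory of \cite{Mar}. First, I would observe that since $n$ is even, the reverse permutation $n \cdots 321 \in \ISp_n$ is $\Sp$-dominant with associated strict partition $\mu := (n-2, n-4, \ldots, 2)$: a direct inspection shows $\DSp(n \cdots 321)$ consists precisely of the positions $(i+j, j)$ with $1 \leq j \leq n/2 - 1$ and $1 \leq i \leq n - 2j$. Thus Theorem~\ref{dom-thm} applies, giving the product formula for $\Gfpf_{n \cdots 321}$, and in particular $\ellfpf(n \cdots 321) = |\mu|$.

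Passing to the stable limit via Corollary~\ref{gsp-cor} gives
\[
\GSp_{n \cdots 321} = \sum_{w \in \HAfpf(n \cdots 321)} \beta^{\ell(w) - |\mu|}\, G_w,
\]
and by Theorem~\ref{mar-thm} this symmetric function is an $\NN[\beta]$-linear combination of the $K$-theoretic Schur $P$-functions $\GP_\lambda$ for $\lambda \in \SLambda$. The task is then to identify this combination as the single term $\GP_\mu$ with coefficient one. The cleanest way to accomplish this is to invoke \cite[Corollary 3.27]{Mar}, which handles exactly this identification for the longest fixed-point-free involution by an explicit combinatorial analysis of its Hecke atoms.

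The main obstacle is this last combinatorial identification. Each $G_w$ appearing in the sum itself expands as an $\NN[\beta]$-linear combination of stable Grothendieck polynomials $G_\lambda$ via Theorem~\ref{buch-et-al-thm}, and the set $\HAfpf(n \cdots 321)$ is already nontrivial in small cases (for instance $\HAfpf(4321) = \{1342, 3124, 3142\}$ as computed in the example preceding Section~\ref{subsec:degeneracy-loci}), so showing that the aggregated sum collapses precisely to $\GP_\mu$ requires the careful bijective analysis in \cite{Mar} matching Hecke atoms against shifted set-valued tableaux. An alternative strategy would be to first verify the identity at $\beta = 0$, where $\GSp_{n \cdots 321}|_{\beta = 0} = P_\mu$ is a classical statement about involution Stanley symmetric functions, and then rule out higher-$\beta$ corrections by degree considerations; but even this approach still relies on Theorem~\ref{mar-thm} to restrict the possible $\GP_\lambda$ corrections.
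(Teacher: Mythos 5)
Your proposal is correct and matches the paper's treatment: the paper states this result as an immediate consequence of \cite[Theorem 1.9 and Corollary 3.27]{Mar}, with no further argument, and your reduction via Corollary~\ref{gsp-cor} and Theorem~\ref{mar-thm} followed by the citation of \cite[Corollary 3.27]{Mar} for the final identification is exactly the intended route. Your preliminary computation of $\DSp(n\cdots 321)$ and the identification of the shape $(n-2,n-4,\dots,2)$ are also correct, though not strictly needed beyond confirming $\ellfpf(n\cdots 321)=|\mu|$.
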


For the rest of this section let $n \in \PP$ be arbitrary and suppose $z \in I_n$.
We wish to define the \emph{orthogonal stable Grothendieck polynomial} of $z$ by
\be
\GO_z := \lim_{m\to \infty} \iG_{1^m \times z}.
\ee
Unlike \eqref{gsp-eq},
it is not clear
that this limit exists for an arbitrary involution,
though we expect that this is always the case.

By  Theorem~\ref{thm:GO-pfaffian}, 
we at least know that $\GO_z$ is a well-defined power series when $z \in I_\infty$
is vexillary, 
since then $1^m\times z$ is also vexillary with 
\[ 
\DO(1^m \times z) = \{ (i+m, j+m) : (i,j)\in \DO(z)\}
\]
for all $m \in \NN$, so the corresponding sequence of Pfaffian
formulas for $\iG_{1^m\times z}$ obviously converges.
Since the matrix entries \eqref{fkm-eq} are symmetric when $p,q\to \infty$,
the power series $\GO_z$ is also symmetric when $z$ is vexillary.
Our last main result will show 
that in this case $\GO_z$ is actually a single $K$-theoretic Schur $Q$-function.

For this, we require the following theorem of 
Nakagawa and Naruse  \cite[Theorem 5.2.4]{NN2018}.
Write $\pf[a_{ij}]_{1\leq i < j\leq m}$ for the Pfaffian of the $m\times m$ 
skew symmetric matrix $A$ whose entries satisfy
$A_{ij}= -A_{ji}= a_{ij}$ for $i<j$.
Define 
\[ \Pi(u,v) := \frac{1}{1+\beta v} \prod_{j=1}^\infty \frac{1 + \beta x_j}{1-x_j u} \cdot (1 + (u+\beta) x_j) \in \ZZ[\beta][[u,v,x_1,x_2,\dots]]
\]
and 
\[ \Delta(u,v) := \frac{1-uv}{(1+\beta u)(1+\beta u + uv)} \in \ZZ[\beta][[u,v]].\]
Finally, for any $a,b \in \NN$ let
\[
\GQ_{(a,b)} := [u^{-a}v^{-b}]\Pi(u^{-1},u)\Pi(v^{-1},v)\Delta(u,v^{-1}).
\]

\begin{theorem}[{\cite[Theorem 5.7]{NN2018}}] \label{thm:nakagawa-naruse}
Let $\lambda = (\lambda_1>\lambda_2>\dots>\lambda_r\geq 0)$ be a strict partition with $r \in 2\PP$ parts, 
the last of which may be zero. Then 
\[ \GQ_\lambda = \pf\left[ \sum_{k=0}^\infty \sum_{l=0}^\infty \beta^{k+l}\binom{i+1-r}{k} \binom{j-r}{l} \GQ_{(\lambda_i+k,\lambda_j+l)} \right]_{1\leq i < j \leq r}.\]
\end{theorem}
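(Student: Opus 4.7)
The plan is to follow the strategy of Nakagawa and Naruse, combining a generating-function representation of $\GQ_\lambda$ with a $K$-theoretic analog of Schur's classical Pfaffian identity. First I would establish that the strict-partition case extends the two-variable definition to a constant-term expression
\[
\GQ_\lambda = [u_1^{-\lambda_1} u_2^{-\lambda_2}\cdots u_r^{-\lambda_r}] \prod_{i=1}^r \Pi(u_i^{-1}, u_i) \cdot \prod_{1 \leq i < j \leq r} \Delta(u_i, u_j^{-1}).
\]
This should follow from the Ikeda--Naruse Cauchy identity for $\GQ$-functions, or can be verified directly by showing both sides satisfy the same tableaux-based recursion and agree for one- and two-row shapes.

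Second, I would apply a Pfaffian identity of the shape
\[
\prod_{1 \le i < j \le r} \Delta(u_i, u_j^{-1}) \cdot \prod_{i=1}^r (1+\beta u_i)^{r-i} = \pf\bigl[K_{ij}(u_i, u_j)\bigr]_{1 \le i<j\le r}
\]
for an explicit symmetric kernel $K_{ij}$. This is a $K$-theoretic deformation of Schur's classical Pfaffian identity $\prod_{i<j} \tfrac{u_j - u_i}{u_j + u_i} = \pf\bigl[\tfrac{u_j - u_i}{u_j + u_i}\bigr]$, in which the correction factor $\prod_i (1+\beta u_i)^{r-i}$ records the asymmetry of the denominator $(1+\beta u)(1+\beta u + uv)$ of $\Delta(u, v)$ in its two arguments. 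One can prove such an identity either by induction on $r$ via a Laplace-type expansion of Pfaffians along the last row and column, or via a minor-summation formula of Ishikawa--Wakayama type.

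Third, I would commute the Pfaffian with the coefficient extraction in the generating-function formula. Each entry of the resulting Pfaffian takes the form
\[
[u_i^{-\lambda_i} u_j^{-\lambda_j}]\, \Pi(u_i^{-1}, u_i)\Pi(u_j^{-1}, u_j) \Delta(u_i, u_j^{-1}) \cdot (1+\beta u_i)^{i+1-r}(1+\beta u_j)^{j-r},
\]
where the exponents $i+1-r$ and $j-r$ come from redistributing the correction $\prod_i (1+\beta u_i)^{r-i}$ across the $r/2$ pairs in the Pfaffian expansion. Expanding $(1+\beta u)^m = \sum_k \binom{m}{k}(\beta u)^k$ by the generalized binomial theorem shifts the exponents of $u_i$ and $u_j$ by $k$ and $l$ inside the bracket, so by the two-variable case the entry becomes $\sum_{k,l\geq 0} \beta^{k+l}\binom{i+1-r}{k}\binom{j-r}{l} \GQ_{(\lambda_i+k,\lambda_j+l)}$, which is exactly the asserted matrix entry. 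The evenness hypothesis $r \in 2\PP$ is built in throughout since Pfaffians require even-sized antisymmetric matrices; for a strict partition with an odd number of nonzero parts, appending $\lambda_r = 0$ handles the case cleanly using $\GQ_{(a,0)} = \GQ_{(a)}$.

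The main obstacle will be step two: pinning down the precise $K$-theoretic Pfaffian identity and verifying that the correction exponents distribute symmetrically so as to produce the pair $(i+1-r, j-r)$ in each Pfaffian entry. A direct deformation of Schur's identity produces various spurious factors, and controlling these carefully to recover the clean binomial expressions $\binom{i+1-r}{k}\binom{j-r}{l}$ is the heart of the computation.
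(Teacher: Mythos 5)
The paper does not prove this statement at all: it is imported verbatim as \cite[Theorem 5.7]{NN2018}, and the surrounding text (Lemma~\ref{lem:two-row-GQ}, Theorem~\ref{final-thm}) only \emph{uses} it. So there is no internal proof to compare against; the relevant comparison is with the Nakagawa--Naruse source, whose strategy your outline does track correctly in broad strokes: a multivariable generating-function (constant-term) expression for $\GQ_\lambda$, a $\beta$-deformed Schur-type Pfaffian identity, and commuting $\pf$ with coefficient extraction so that each entry reduces to the two-row case $\GQ_{(a,b)}$ defined by $\Pi$ and $\Delta$.

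That said, your step two contains a genuine gap, and the heuristic you offer for closing it cannot work as stated. You propose to obtain the exponents $i+1-r$ and $j-r$ by ``redistributing the correction $\prod_i(1+\beta u_i)^{r-i}$ across the $r/2$ pairs.'' But in the Pfaffian expansion over perfect matchings of $[r]$, the exponent that the variable $u_m$ receives from the entry of its pair depends on whether $m$ is the smaller index of that pair (giving $m+1-r$) or the larger one (giving $m-r$); these differ by $1$, and which role $m$ plays varies from matching to matching. Hence the product of the correction factors over a matching is \emph{not} a matching-independent global factor, and no bookkeeping redistribution of $\prod_i(1+\beta u_i)^{r-i}$ can produce the asserted entries. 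The required identity is a genuine $\beta$-deformation of Schur's Pfaffian identity with these built-in asymmetric exponents (in \cite{NN2018} it is derived from a Schur/Nimmo-type Pfaffian identity proved separately, e.g.\ by induction via Pfaffian expansion along the last row and column, as you mention as an alternative). Until that identity is stated precisely and proved, the argument is a plan rather than a proof --- which you partly acknowledge, but the specific mechanism you lean on for the exponents is the part that fails. The remaining steps (the multivariable constant-term formula, the extraction of coefficients entrywise using $(1+\beta u)^m=\sum_k\binom{m}{k}(\beta u)^k$, and the padding with $\lambda_r=0$ to make $r$ even) are consistent with the definitions of $\Pi$, $\Delta$, and $\GQ_{(a,b)}$ used in the paper.
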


In the next two results,
let $c^{(i)}(u) = \prod_{j= 1}^\infty \frac{1 + x_j u}{1 + \bar{x}_j u}$ for $i \in \PP$ where $\bar x := \frac{-x}{1+\beta x}$,
and define $R^{(i,j)}$ for $i,j \in \PP$ as in \eqref{Rij-eq}.

\begin{lemma} \label{lem:two-row-GQ} 
If $a,b \in \NN$ then
    \begin{equation*}
        \GQ_{(a,b)} =R^{(1,2)} \left (1-\beta T^{(1)}\right)^{-a+1}\left(1-\beta T^{(2)}\right)^{-b} c^{(1)}_a c^{(2)}_b.
    \end{equation*}
\end{lemma}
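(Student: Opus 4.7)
The plan is to translate both sides of the identity into a common generating-function expression. I will first establish the algebraic identity
\[
\Pi(w, z) = \frac{c^{(1)}(w+\beta)}{1+\beta z}.
\]
This follows from rewriting $c^{(1)}(u) = \prod_j (1+x_j u)/(1+\bar x_j u)$: using $1+\bar{x}_j u = (1+\beta x_j - x_j u)/(1+\beta x_j)$ one obtains $c^{(1)}(u) = \prod_j (1+\beta x_j)(1+ux_j)/(1-x_j(u-\beta))$, and substituting $u = w+\beta$ reproduces the Nakagawa--Naruse product formula for $\Pi(w,z)$. In particular $\Pi(u^{-1}, u) = c^{(1)}(u^{-1}+\beta)/(1+\beta u)$, and since $(u^{-1}+\beta)^k = u^{-k}(1+\beta u)^k$, the expansion $\Pi(u^{-1}, u) = \sum_{k\geq 0} c^{(1)}_k u^{-k}(1+\beta u)^{k-1}$, together with an analogous formula in $v$, yields an explicit series expression for the generating function defining $\GQ_{(a,b)}$.

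Next, I will interpret the right side of the lemma via the raising-operator / generating-function correspondence $T^{(i)} \leftrightarrow u_i^{-1}$. Concretely, if $c^{(i)}(u) = \sum_k c^{(i)}_k u^k$, then for any rational function $F$ (expanded in the appropriate direction),
\[
F(T^{(1)}, T^{(2)})\, c^{(1)}_a c^{(2)}_b = [u^a v^b]\,F(u^{-1}, v^{-1})\, c^{(1)}(u)\, c^{(2)}(v).
\]
Applying this to the raising operator expression, one gets
\[
R^{(1,2)}(1-\beta T^{(1)})^{-a+1}(1-\beta T^{(2)})^{-b}\, c^{(1)}_a c^{(2)}_b = [u^1 v^0]\,\frac{(u-v)\, c^{(1)}(u)\, c^{(2)}(v)}{(u+v-\beta)(u-\beta)^{a-1}(v-\beta)^b}.
\]

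To match the two expressions I will perform the change of variables $u = (s-\beta)^{-1}$, $v = (t-\beta)^{-1}$ inside the generating function for $\GQ_{(a,b)}$. Under this substitution, $c^{(i)}(u^{-1}+\beta)$ becomes $c^{(i)}(s)$, while $1+\beta u \mapsto s/(s-\beta)$ and $u+v+\beta uv \mapsto (s+t-\beta)/[(s-\beta)(t-\beta)]$. Accounting for the Jacobian implicit in the formal coefficient extraction, one finds that $\GQ_{(a,b)}$ and the right side of the lemma both arise as formal residues of the single meromorphic two-form
\[
\omega := \frac{(s-t)\, c^{(1)}(s)\, c^{(2)}(t)\, ds\, dt}{s^2\, t\,(s+t-\beta)(s-\beta)^{a-1}(t-\beta)^b},
\]
with the former evaluated at $(s,t)=(\infty,\infty)$ and the latter at $(s,t)=(0,0)$.

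The hard part will be the final residue-matching step: the residue theorem on $(\PP^1)^2$ gives equality of these two residues only after confirming that the contributions from the remaining poles (along $s=\beta$, $t=\beta$, and $s+t=\beta$) cancel. This cancellation should reflect the antisymmetric factor $(s-t)/(s+t-\beta)$ built into $R^{(1,2)}$, and I expect it to follow from a careful rearrangement of Laurent coefficients; formulating it precisely within the purely formal setting of the paper will require the most work.
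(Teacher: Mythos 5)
Your first two steps are sound: the identity $\Pi(u,v) = c^{(1)}(u+\beta)/(1+\beta v)$ is exactly right (and is also the starting point of the paper's argument), and the dictionary $F(T^{(1)},T^{(2)})\,c^{(1)}_a c^{(2)}_b = [u^a v^b]\,F(u^{-1},v^{-1})\,c^{(1)}(u)\,c^{(2)}(v)$ is a correct way to convert the raising-operator side into a coefficient extraction, provided one keeps track of the fact that the operator factors are expanded in nonnegative powers of $T^{(1)},T^{(2)}$ (hence of $u^{-1},v^{-1}$) while each $c^{(i)}$ is a power series in $u$. The gap is the entire last step. A ``residue theorem on $(\PP^1)^2$'' is not available here: $c^{(1)}(s)$ and $c^{(2)}(t)$ are infinite products, not rational functions, so your two-form $\omega$ is not a meromorphic form with finitely many poles, and even after specializing to finitely many $x_j$ it acquires poles along every $s=-1/\bar x_j$ that your bookkeeping omits. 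Worse, the two coefficient extractions you want to compare are not honest local residues at $(0,0)$ and $(\infty,\infty)$: each is taken with a \emph{mixed} expansion convention (denominators such as $(u+v-\beta)^{-1}$ and $(1-\beta u^{-1})^{-a+1}$ expanded at $u=\infty$, but $c^{(i)}$ expanded at $0$), so ``equality up to the contributions of the remaining poles'' is not something you can import from complex analysis. The asserted cancellation along $s=\beta$, $t=\beta$, $s+t=\beta$ is precisely the content of the lemma and is nowhere established.

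What is actually needed, and what the paper supplies, is a purely formal substitute for the residue-matching. Writing $c(u+\beta)=\sum_{m\geq 0}(1-\beta T)^{-m-1}c_m u^m$, one shows that for every $i\in\ZZ$, multiplication by $u^i$ agrees with the operator $T^i(1-\beta T)^{-i}$ on all coefficients of $u^m$ with $m\leq 0$ (this is \eqref{eq:u-relation}). Since $\GQ_{(a,b)}$ extracts only the coefficient of $u^{-a}v^{-b}$ with $a,b\geq 0$, this identity legitimizes replacing $u$ and $v$ by $T^{(1)}/(1-\beta T^{(1)})$ and $T^{(2)}/(1-\beta T^{(2)})$ inside $\Delta(u,v^{-1})(1+\beta u)^{-1}(1+\beta v)^{-1}$, turning it into $R^{(1,2)}(1-\beta T^{(1)})^{2}(1-\beta T^{(2)})$, after which the lemma drops out. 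To salvage your plan you would need to prove a statement of exactly this kind --- an identity of coefficient extractions valid in nonpositive degrees --- rather than appeal to a residue theorem.
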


\begin{proof}
    Abbreviate by setting $T := T^{(1)}$ and $c(u) =\sum_{j\geq 0} c_j u^j:= c^{(i)}(u)$, and note that we 
    then have
    $\Pi(u,v) = \tfrac{1}{1+\beta v} \cdot c(u+\beta).$  We compute 
\be \label{eq:c-expansion}
\ba
        c(u+\beta) &= \sum_{j \geq 0}c_j \sum_{m=0}^j \tbinom{j}{m} \beta^{m-j} u^m  
        \\&= \sum_{m\geq 0} u^m \sum_{j \geq m} \tbinom{j}{m} c_j \beta^{m-j}\\
        &= \sum_{m\geq 0} u^m \sum_{j \geq m} \tbinom{j}{m} \beta^{j-m}T^{j-m} c_m 
        = \sum_{m \geq 0} (1-\beta T)^{-m-1} c_m u^m.
\ea
\ee
From this, it follows that if $i \in \ZZ$ then
\[
u^{i} c(u^{-1}+\beta) 
= 
(1-\beta T)^{-i}\sum_{m\geq -i}  (1-\beta T)^{-m-1} c_{m+i} u^{-m}.
\] 
Since $c_{m+i} = T^{i} c_m$ if $m\geq \max\{0,-i\}$,
we deduce that
\begin{equation} \label{eq:u-relation} 
[u^m]\Bigl( u^i c(u^{-1}+\beta)\Bigr) = [u^m] \Bigl( T^i(1-\beta T)^{-i} c(u^{-1}+\beta)\Bigr) \end{equation}
for all $i \in \ZZ$ and $m \leq 0$.

One can check that substituting $u \mapsto \frac{T^{(1)}}{1-\beta T^{(1)}}$ and $v \mapsto \frac{T^{(2)}}{1-\beta T^{(2)}}$ transforms
\[\Delta(u,v^{-1}) 
    \cdot \tfrac{1}{1+\beta u} \cdot  \tfrac{1}{1+\beta v}
  \quad\mapsto\quad R^{(1,2)}  \(1-\beta T^{(1)}\)^2  \(1 - \beta T^{(2)}\).\]
    Fix $a,b \in \NN$.
Since $\GQ_{(a,b)}$ is the coefficient of $u^{-a} v^{-b}$ in
    \[ 
     \Delta(u,v^{-1}) 
    \cdot \tfrac{1}{1+\beta u} \cdot  \tfrac{1}{1+\beta v}  \cdot  c^{(1)}(u^{-1}+\beta) 
      c^{(2)}(v^{-1}+\beta),
         \]
 it follows from \eqref{eq:u-relation}
that $\GQ_{(a,b)}$ is also the coefficient of $u^{-a} v^{-b}$ in
\[
R^{(1,2)}   \(1-\beta T^{(1)}\)^2   \(1 - \beta T^{(2)}\)  
c^{(1)}(u^{-1}+\beta)  
      c^{(2)}(v^{-1}+\beta).
      \]
The result is now clear after using \eqref{eq:c-expansion}
to rewrite this last expression as
$
\sum_{m,n\geq 0} R^{(1,2)}  \(1-\beta T^{(1)}\)^{-m+1}  \(1 - \beta T^{(2)}\)^{-n} 
c^{(1)}_m c^{(2)}_n u^{-m} v^{-n}.
      $
\end{proof}

We may now state our final theorem.

\begin{theorem}\label{final-thm}
If $z \in I_n$ is vexillary then 
$\GO_z = \GQ_{\shO(z)}$.
\end{theorem}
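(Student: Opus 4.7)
The plan is to apply Theorem~\ref{thm:GO-pfaffian} to $1^m\times z$ for each $m\in\NN$, pass to the limit $m\to\infty$ coefficient-wise to obtain a Pfaffian expression for $\GO_z$, and recognize it as the Pfaffian for $\GQ_{\shO(z)}$ coming from Theorem~\ref{thm:nakagawa-naruse}.

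First I would verify the needed stability statements. Since prepending an identity block cannot create a $2143$ pattern, $1^m\times z$ is still vexillary. Moreover $\DO(1^m\times z) = \{(p+m,q+m):(p,q)\in\DO(z)\}$, so $\Ess(\DO(1^m\times z)) = \{(p+m,q+m):(p,q)\in\Ess(\DO(z))\}$, and $\rank((1^m\times z)_{[p+m][q+m]}) = m + \rank(z_{[p][q]})$; thus $\shO(1^m\times z) = \shO(z) =: \lambda$, the set $S(1^m\times z) = S(z)$, and the Pfaffian size $r$ is independent of $m$. The only $m$-dependence in the formula of Theorem~\ref{thm:GO-pfaffian} is through each Chern polynomial, which has the form $c^{(i),m}(t) = \prod_{k=1}^{p+m}(1+x_k t)\prod_{k=1}^{q+m}(1+\bar x_k t)^{-1}$ for the appropriate pair $(p,q)\in\Ess(\DO(z))$. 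Each coefficient of $t^d$ in $c^{(i),m}(t)$ stabilizes as $m\to\infty$ to the coefficient of $t^d$ in the common limit $c(t) := \prod_{k=1}^\infty(1+x_k t)/(1+\bar x_k t)$, independent of $i$. Since the Pfaffian is a fixed polynomial expression in these coefficients, the limit exists and
\[
\GO_z = \pf(\fk M), \qquad \fk M_{ij} = R^{(i,j)}(1-\beta T^{(i)})^{r-i-\lambda_i}(1-\beta T^{(j)})^{r-j-\lambda_j} c_{\lambda_i}^{(i)} c_{\lambda_j}^{(j)},
\]
where all $c^{(i)}$ now refer to the coefficient sequence of the single limit series $c(t)$.

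To finish, I would match $\pf(\fk M)$ against $\GQ_\lambda$. Applying Lemma~\ref{lem:two-row-GQ} with the indices $1,2$ replaced by $i,j$ (which is legitimate since $R^{(i,j)}$ and $T^{(i)}, T^{(j)}$ act on $c^{(i)}_{\lambda_i}c^{(j)}_{\lambda_j}$ exactly as $R^{(1,2)}$, $T^{(1)}, T^{(2)}$ do on $c^{(1)}_{\lambda_i}c^{(2)}_{\lambda_j}$), the $(i,j)$ entry of the Nakagawa--Naruse Pfaffian in Theorem~\ref{thm:nakagawa-naruse} becomes
\[
\sum_{k,l\geq 0}\beta^{k+l}\tbinom{i+1-r}{k}\tbinom{j-r}{l} R^{(i,j)}(1-\beta T^{(i)})^{-\lambda_i-k+1}(1-\beta T^{(j)})^{-\lambda_j-l}c_{\lambda_i+k}^{(i)}c_{\lambda_j+l}^{(j)}.
\]
Writing $c^{(i)}_{\lambda_i+k} = (T^{(i)})^k c^{(i)}_{\lambda_i}$ and similarly for $j$, pulling the raising operators across $R^{(i,j)}$ (with which they commute), and then invoking the binomial identity $\sum_{k\geq 0}\tbinom{N}{k}\bigl(\tfrac{\beta T}{1-\beta T}\bigr)^k = (1-\beta T)^{-N}$ with $N = i+1-r$ for the inner sum over $k$ and $N = j-r$ for the sum over $l$ collapses the double sum to $R^{(i,j)}(1-\beta T^{(i)})^{r-i-\lambda_i}(1-\beta T^{(j)})^{r-j-\lambda_j}c^{(i)}_{\lambda_i}c^{(j)}_{\lambda_j} = \fk M_{ij}$. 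The edge case $r = \ell(\lambda)+1$ (with $\lambda_r = 0$ and $c^{(r)}_0 = 1$) goes through identically since only $l = 0$ contributes to $\sum_l \binom{0}{l}(\cdots)$. Hence $\GQ_\lambda = \pf(\fk M) = \GO_z$.

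The principal obstacle is the final raising-operator manipulation: one must be careful to commute $R^{(i,j)}$ past the $(T^{(i)})^k (T^{(j)})^l$ factors and then apply the binomial identity on each variable separately. Once the operator algebra is arranged, however, this is a one-line calculation. Convergence of $\iG_{1^m\times z}\to\GO_z$ is essentially automatic given that $\lambda$, $r$, and $S$ are $m$-independent and the Chern series coefficients stabilize monomial-by-monomial.
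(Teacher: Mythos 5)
Your proposal is correct and follows essentially the same route as the paper: stabilize the Pfaffian of Theorem~\ref{thm:GO-pfaffian} under $z\mapsto 1^m\times z$ to get a Pfaffian expression for $\GO_z$ with the limiting Chern series, then match it entry-by-entry against the Nakagawa--Naruse Pfaffian via Lemma~\ref{lem:two-row-GQ} and the binomial collapse $\bigl(1-\beta T+\beta T\bigr)^{i+1-r}=1$. The only cosmetic difference is that you phrase the collapse as the identity $\sum_k\binom{N}{k}\bigl(\tfrac{\beta T}{1-\beta T}\bigr)^k=(1-\beta T)^{-N}$, which is exactly the computation carried out in the paper's proof.
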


\begin{proof}
Fix a vexillary involution $z \in I_n$.
Let $\lambda=\shO(z)$ and
   define $r$ to be the smallest even integer with $r \geq \ell(\lambda)$. 
As noted at the beginning of this section,
 Theorem~\ref{thm:GO-pfaffian} implies
 that $\GO_z$ is
  the Pfaffian of the $r \times r$ skew-symmetric matrix whose $(i,j)$ entry for $i < j$ is 
\[
       R^{(i,j)} \left(1 - \beta T^{(i)}\right)^{r-i-\lambda_i} \left(1 - \beta T^{(j)}\right)^{r-j-\lambda_j} c^{(i)}_{\lambda_i} c^{(j)}_{\lambda_j}.
\]
Thus, it suffices to show that   $\GQ_{\lambda}$ is given by the same Pfaffian.
It follows from Theorem~\ref{thm:nakagawa-naruse}
 and Lemma~\ref{lem:two-row-GQ} that
 $\GQ_{\lambda}$ is the Pfaffian of the $r \times r$ skew-symmetric matrix whose $(i,j)$ entry for $i < j$ is
    \begin{equation} \label{eq:NN-entry}
        R^{(i,j)} \sum_{k,l\geq 0}\beta^{k+l} \tbinom{i+1-r}{k}\tbinom{j-r}{l} \left(1 - \beta T^{(i)}\right)^{-\lambda_i-k+1} \left(1 - \beta T^{(j)}\right)^{-\lambda_j-l} c^{(i)}_{\lambda_i+k} c^{(j)}_{\lambda_j+l}.
    \end{equation}
But we have
    \begin{align*}
        & \sum_{k\geq 0} \beta^k \tbinom{i+1-r}{k} \left(1 - \beta T^{(i)}\right)^{-\lambda_i-k+1} c^{(i)}_{\lambda_i+k}\\
        &=\( \sum_{k\geq 0} \left(\beta T^{(i)}\right)^k \tbinom{i+1-r}{k} \left(1 - \beta T^{(i)}\right)^{i+1-r-k}\) \left(1 - \beta T^{(i)}\right)^{r-i-\lambda_i} c^{(i)}_{\lambda_i}\\
        &= \left(1-\beta T^{(i)}+\beta T^{(i)}\right)^{i+1-r} \left(1 - \beta T^{(i)}\right)^{r-i-\lambda_i} c^{(i)}_{\lambda_i}\\
        &= \left(1 - \beta T^{(i)}\right)^{r-i-\lambda_i} c^{(i)}_{\lambda_i}
    \end{align*}
 and similarly
 \[
 \sum_{l\geq 0} \beta^l \tbinom{j-r}{l} \(1 - \beta T^{(j)}\)^{-\lambda_j-l} c^{(j)}_{\lambda_j+l}
 =\(1 - \beta T^{(j)}\)^{r-j-\lambda_j}  c^{(j)}_{\lambda_j}.
 \]
Thus \eqref{eq:NN-entry} is equal to
$
        R^{(i,j)}\left(1 - \beta T^{(i)}\right)^{r-i-\lambda_i} \left(1 - \beta T^{(j)}\right)^{r-j-\lambda_j} c^{(i)}_{\lambda_i} c^{(j)}_{\lambda_j}
$
which suffices to prove the theorem.
\end{proof}

\begin{corollary}
If $n \in \PP$ then $\GO_{n\cdots 321} = \GQ_{(n-1,n-3,n-5,\dots)}$.
\end{corollary}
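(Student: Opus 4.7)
The plan is to apply Theorem~\ref{final-thm} directly, which reduces the statement to two routine verifications about the reverse involution $z = n\cdots 321 \in I_n$: first, that $z$ is vexillary, and second, that its orthogonal shape $\shO(z)$ equals the strict partition $(n-1, n-3, n-5, \dots)$. Note that the latter identity is already asserted (without proof) in the discussion immediately preceding Lemma~\ref{lem:degeneracy-loci}, so our job is to justify both facts explicitly.

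For vexillarity, observe that $z = n\cdots 321$ is a decreasing sequence, so every subsequence of $z(1)z(2)\cdots z(n)$ is decreasing. In particular, $z$ contains no $2143$ pattern, and hence is vexillary in the sense defined just before Lemma~\ref{vex-lem}. With this in hand, Theorem~\ref{final-thm} immediately gives $\GO_{n\cdots 321} = \GQ_{\shO(n\cdots 321)}$, so it remains only to identify $\shO(n\cdots 321)$.

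For the shape computation, I would unwind Definition~\ref{rothe-def}. Since $z(k) = n+1-k$ is strictly decreasing, the condition $z(i) > z(j) \leq i < j$ simplifies: $z(i)>z(j)$ is automatic from $i<j$, and $z(j) \leq i$ becomes $i+j \geq n+1$. Thus
\[
\DO(n\cdots 321) = \{(i, n+1-j) : 1 \leq i < j \leq n, \ i+j \geq n+1\}.
\]
Substituting $k = n+1-j$ rewrites this as $\{(i,k) : 1 \leq k \leq i \leq n-k\}$. The orthogonal code $c^\O(z) = (c_1,\dots,c_n)$ therefore satisfies $c_i = \min(i, n-i)$, yielding the entries $1, 2, \dots, \lfloor n/2 \rfloor, \dots, 2, 1, 0$ with the middle value $\lfloor n/2 \rfloor$ appearing twice when $n$ is even (and $\lceil n/2 \rceil$ appearing twice when $n$ is odd, counting from the middle). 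Sorting and transposing then produces the strict partition whose $k$th part is $n - 2k + 1$, which is exactly $(n-1, n-3, n-5, \dots)$ terminating at $1$ or $2$ according to the parity of $n$.

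The only mildly fiddly step is the transposition calculation, but it is entirely mechanical: the sorted partition has two parts equal to each value in $\{1, 2, \dots, \lfloor n/2 \rfloor \}$ (with the largest part appearing once when $n$ is even), so its conjugate has $k$th column of height $n - 2k + 1$. Combined with the preceding paragraph and Theorem~\ref{final-thm}, this yields $\GO_{n\cdots 321} = \GQ_{(n-1, n-3, n-5, \dots)}$, as desired. I do not anticipate any genuine obstacle; the corollary is essentially an unpacking of the theorem against an explicit diagram.
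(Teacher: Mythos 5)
Your proposal is correct and follows the paper's own (one-line) argument: invoke Theorem~\ref{final-thm} and check that $\shO(n\cdots 321)=(n-1,n-3,\dots)$; your explicit diagram and conjugation computation correctly fills in that check. The only blemish is the parenthetical claim that $\lceil n/2\rceil$ appears twice when $n$ is odd (it does not appear at all; the repeated value is $\lfloor n/2\rfloor$), but this does not affect the conclusion since your main count already handles both parities.
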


\begin{proof}
It suffices to observe that $\shO(n\cdots 321) = (n-1,n-3,n-5,\dots)$.
\end{proof}

Following \cite{HMP4}, we say that an involution $z \in I_n$ is \emph{I-Grassmannian}
if there are integers $r \in \NN$ and $1 \leq \phi_1  < \phi_2 < \dots < \phi_r \leq n$
such that 
\be\label{igrass-eq} z = (\phi_1,n+1)(\phi_2,n+2)\cdots(\phi_r,n+r).\ee
The case $n=r=0$ corresponds to $z=1$.
Computing $\shO(z)$  gives the following:

\begin{corollary}
If $z \in I_n$ is I-Grassmannian of the form \eqref{igrass-eq},
then 
\[\GO_{z} = \GQ_{(n+1-\phi_1,n+1-\phi_2,\dots,n+1-\phi_r)}.\]
\end{corollary}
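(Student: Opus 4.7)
The plan is to deduce this from Theorem~\ref{final-thm}, which asserts $\GO_z = \GQ_{\shO(z)}$ for every vexillary involution $z$. The task therefore reduces to checking two things: (a) the I-Grassmannian involution $z = (\phi_1,n+1)(\phi_2,n+2)\cdots(\phi_r,n+r)$ is vexillary, i.e.\ $2143$-avoiding; and (b) its orthogonal shape is $\shO(z) = (n+1-\phi_1,\dots,n+1-\phi_r)$. Given (a) and (b), the corollary is immediate.

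For (a), I would classify the inversions of $z$ directly. Since $z$ fixes every element of $[n]\setminus\{\phi_1,\dots,\phi_r\}$ and its nontrivial orbits are the pairs $\{\phi_k,n+k\}$, with both coordinates increasing in $k$, every inversion $(i,j)$ (with $i<j$ and $z(i)>z(j)$) must have either $i\in\{\phi_1,\dots,\phi_r\}$ or $j\in\{n+1,\dots,n+r\}$. A short case analysis comparing two such inversions with $i_2<i_3$ then rules out the pattern $2143$. Alternatively, one can appeal to the discussion of I-Grassmannian involutions as a vexillary subclass in \cite{HMP4}.

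For (b), I would compute $\DO(z)$ row by row. Rows indexed by $n+l$ turn out to be empty: any $j>n+l$ with $z(j)\le n+l$ must have $j=n+k$ with $k>l$, and then $z(j)=\phi_k>\phi_l=z(n+l)$ violates the condition $z(i)>z(j)$. For $i\in[n]$, a direct check shows the positions in row $i$ of $\DO(z)$ are precisely $(i,\phi_k)$ for those $k$ with $\phi_k\le i$; setting $\phi_{r+1}:=n+1$, this gives the row count $c_i=m$ whenever $\phi_m\le i<\phi_{m+1}$, and $c_i=0$ otherwise. Sorting the entries of $c^\O(z)$ decreasingly yields a partition with $n+1-\phi_r$ parts of size $r$, then $\phi_r-\phi_{r-1}$ parts of size $r-1$, and so on down to $\phi_2-\phi_1$ parts of size $1$. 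Taking the conjugate partition gives exactly $(n+1-\phi_1,\dots,n+1-\phi_r)$.

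The main obstacle is really just keeping the case analysis in (a) and the row-count bookkeeping in (b) straight; neither step involves any conceptual difficulty beyond Theorem~\ref{final-thm}, which supplies the Pfaffian-free identification of $\GO_z$ with a single $K$-theoretic $Q$-function.
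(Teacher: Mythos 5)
Your proposal is correct and follows exactly the route the paper intends: apply Theorem~\ref{final-thm} after checking that an I-Grassmannian involution is vexillary and that its orthogonal shape is $(n+1-\phi_1,\dots,n+1-\phi_r)$ (the paper compresses both verifications into the phrase ``Computing $\shO(z)$ gives the following''). Your row-by-row computation of $\DO(z)$ and the conjugation of the resulting code are accurate, so nothing is missing.
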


Thus, every $K$-theoretic Schur $Q$-function occurs 
as $\GO_z$ for some $z$,
since for any strict partition $\lambda$ there is an I-Grassmannian involution
of shape $\lambda$.

\section{Open problems}\label{last-sect}

We conclude with a list of related open problems.

Each $\Gfpf_z$ is a finite 
linear combination of the polynomials $\fkG_w$; the summands are described by Proposition~\ref{ha-prop}.
It remains find analogous results for $\iG_z$:

\begin{problem}
Describe the set of summands expanding $\iG_z$ as a $\ZZ[\beta]$-linear combination of the polynomials $\fkG_w$.
Are the coefficients in this expansion all nonnegative?
\end{problem}

Lenart \cite{Lenart} proves a ``transition formula'' 
which expands $(1+\beta x_j) \fkG_w$
as a finite, $\NN[\beta]$-linear combination of the polynomials $\fkG_v$.
The sequel to this paper \cite{MP} describes an analogous formula 
involving the symplectic Grothendieck polynomials $\Gfpf_z$.

\begin{problem}
Is there a transition formula in the sense of \cite{Lenart,MP} for $\iG_z$?
\end{problem}

It remains to show that  $\GO_z$ is well-defined with $z \in I_n$ is not vexillary.

\begin{problem}
Show that $\GO_z := \lim_{m \to \infty} \iG_{1^m \times z}$ 
converges for all $z \in I_n$.
\end{problem}

Recall that $\sP$ and $\SLambda$ denote the sets of arbitrary and strict partitions.

\begin{problem}
Does it always hold that $\GO_z  \in \bigoplus_{\lambda\in\SLambda} \NN[\beta] \GQ_\lambda$?
\end{problem}

It is known that if $\lambda,\mu\in\SLambda$ then
$\GQ_\lambda \GQ_\mu \in \sum_{\nu\in\SLambda} \ZZ[\beta] \GQ_\nu$,
where the sum could involve
infinitely many terms $\GQ_\nu$.
The following problem, asserting that the sum is always finite, is
\cite[Conjecture 3.2]{IkedaNaruse}.

\begin{problem}
Show that if $\lambda,\mu\in\SLambda$ then
$\GQ_\lambda \GQ_\mu \in \bigoplus_{\nu\in\SLambda} \NN[\beta] \GQ_\nu$.
\end{problem}

If $\lambda\in\SLambda$ then $\GQ_\lambda \in \sum_{\mu\in\sP} \ZZ[\beta] G_\mu$
since this is true with $\GQ_\lambda$ replaced by any power series in $\ZZ[\beta][[x_1,x_2,\dots]]$
that is symmetric in the $x_i$ variables.
This expansion could be an infinite sum, but we expect that it is also finite:

\begin{problem}
Show that if $\lambda$ is a strict partition then $\GQ_\lambda \in \bigoplus_{\mu\in\sP} \NN[\beta] G_\mu$.
\end{problem}

\end{document}